\theoremstyle{plain}
\newtheorem{theorem}{Theorem}[section]
\newtheorem{lemma}[theorem]{Lemma}
\newtheorem{proposition}[theorem]{Proposition}
\theoremstyle{definition}
\newtheorem{definition}[theorem]{Definition}
\newtheorem{remark}[theorem]{Remark}
\newtheorem{example}[theorem]{Example}
\theoremstyle{remark}
\mathchardef\emptyset="001F
\numberwithin{equation}{section}
\newcommand{\ol}{\overline}
\newcommand{\R}{{\mathbb R}}
\newcommand{\Rn}{{\R}^N}
\newcommand{\Om}{\Omega}
\newcommand{\N}{\mathbb N}
\newcommand{\Z}{\mathbb Z}
\newcommand{\Mdue}{{\mathbb M}^{2{\times}2}_{sym}}
\newcommand{\Mtre}{{\mathbb M}^{3{\times}3}_{sym}}
\newcommand{\Mnn}{{\mathbb M}^{N{\times}N}_{sym}}
\newcommand{\Md}{{\mathbb M}^{2{\times}2}_{D}}
\newcommand{\wtos}{\mathrel{\mathop{\rightharpoonup}\limits^*}}
\newcommand{\e}{\varepsilon}
\newcommand{\be}{\begin{equation}}
\newcommand{\ee}{\end{equation}}
\newcommand{\bes}{\begin{eqnarray}}
\newcommand{\ees}{\end{eqnarray}}
\newcommand \dps{\displaystyle }
\newcommand{\zN}{\mathbb{Z}^N}
\newcommand{\tr}{\hbox{tr}}
\newcommand{\al}{\alpha}
\def\a{\alpha}
\title{$Q$-tensor continuum energies as limits of head-to-tail symmetric spin systems}
\author{Andrea Braides\thanks{Dipartimento di Matematica, Universit\`a di Roma `Tor Vergata' via della Ricerca Scientifica, 00133 Roma, Italy. Email: {\tt braides@mat.uniroma2.it}} \and Marco Cicalese\thanks{Zentrum Mathematik - M7, Technische Universit\"at M\"unchen, Boltzmannstrasse 3, 85748 Garching, Germany. Email: {\tt cicalese@ma.tum.de}} \and Francesco Solombrino\thanks{Zentrum Mathematik - M7, Technische Universit\"at M\"unchen, Boltzmannstrasse 3, 85748 Garching, Germany. Email: {\tt francesco.solombrino@ma.tum.de}}}
\begin{document}

\maketitle

\begin{abstract}
We consider a class of spin-type discrete systems and analyze their continuum limit as the lattice spacing goes to zero. Under standard coerciveness and growth assumptions together with an additional head-to-tail symmetry condition, we observe that this limit can be conveniently written as a functional in the space of $Q$-tensors. We further characterize the limit energy density in several cases (both in $2$ and $3$ dimensions). In the planar case we also develop a second-order theory and we derive gradient or concentration-type models according to the chosen scaling. 
\end{abstract}

\section{Introduction}
The application of $\Gamma$-convergence to the study of discrete systems allows the rigorous 
definition of continuum limits for variational problems starting from point interactions. 
This discrete-to-continuum approach consists in 
first introducing a small geometric parameter $\e$ and defining suitable energies whose domain are functions 
 $\{u_i\}$ parameterized by the nodes of a lattice of lattice spacing $\e$, and then identifying those functions with
suitable continuous interpolations. This allows to embed these energies in classes of functionals that
can be studied by `classical' methods of $\Gamma$-convergence. Examples of applications of this 
approach comprise $u_i$ representing an atomic displacement (in which case the continuous parameter
can be interpreted as a macroscopic displacement) or a spin variable (in which case it may give rise
to energies depending on a macroscopic magnetization). It must be noted that the choice of the relevant
macroscopic variable is a fundamental part of the problem, as well as the scaling of the energies.
Following this approach it has been possible, e.g., to prove compactness and integral representation theorems
for volume and surface integrals \cite{ACG,AC,AAG} that follow the localization arguments of classical results
for continuum energies \cite{GCB,DM}. On the other hand the constraints given by the discrete nature of the 
parameters, often entail interesting features of the limit energies (for example, optimality properties for discrete 
linear elastic composites \cite{BF}, multi-phase limits for next-to-nearest neighbor scalar spin systems \cite{ABC},
`surfactant'-type theories \cite{ACS}, etc.). 

Concerning the continuum limits of spin systems, several results have been recently obtained. In few words,
taking the cubic lattice $\Z^N$ as reference, the set up of the problem is the following: given $\Omega\subset\Rn$, $\e\Z_\e(\Omega):=\e\Z^N\cap\Omega$, $Y\subseteq S^{N-1}$ and denoting by $u:\e\Z_\e(\Omega)\to Y$ the spin field, one is interested in the limit as $\e\to 0$ of energies of the form
\begin{eqnarray}\label{intro:energy}
E_\e(u)=\sum_{\xi\in \zN }\sum_{\alpha\in R_\e^\xi(\Omega)} \e^N
g^\xi_\e(\e\alpha,u(\e\alpha),u(\e\al+\e\xi)),
\end{eqnarray}
with $R_\e^\xi(\Omega):=\{\a\in\Z_\e(\Om):\ \a+\xi\in\Z_\e(\Om)\}$. 
The energy density $g^\xi_\e$ represents the interaction potential between points at distance $\e\xi$ in $\Z_\e(\Omega)$. 
Under the {\it exchange symmetry} condition
\begin{eqnarray*}
g_\e^\xi (\e\alpha,u,v)&=&g_\e^{-\xi}(\e\alpha+\e\xi,v,u)
\end{eqnarray*}
and suitable decay assumptions on the strength of the potentials $g_\e^\xi$ as $|\xi|$ diverges, an integral representation result has been proved in \cite{ACG} asserting that, up to subsequences, 
\begin{equation*}
\Gamma{\mbox-}\lim_\e E_\e(u)=\int_\Omega g(x,u(x))\ dx.
\end{equation*} 
Note that, even if the knowledge of this bulk limit does not describe with enough details some of the features of many spin systems, for which the analysis of higher scalings is needed, nevertheless the characterization of $g$ is a necessary starting point. Concerning the analysis at higher order, it is worth noting that few general abstract results are available (interesting exceptions being the forthcoming paper \cite{AAG}, and the case of periodic interactions \cite{BP}). As a matter of fact, in most of the cases the analysis has to be tailored to the specific features of the discrete system and in particular to the symmetries of its energy functional. Following this general idea, the first problem to face is the definition of a `correct' order parameter which may keep track of the concentration of energy at the desired scale. Sometimes the choice of the order parameter is simply the (weak limit of the) spin field $u$ itself. This happens for the ferromagnetic nearest-neighbor energies of the Ising systems (when $\#Y=2$) discussed in \cite{ABC}, as well as for ternary-type Blume-Emery-Griffith models considered in \cite{ACS} (when $\#Y=3$, or higher). In these cases the $\Gamma$-limits of the energies may be compared with suitable variational smooth interpolations (as in \cite{BLB,BLBL}) or scaling limits in Statistical Mechanics \cite{P}.
In other cases, as for instance for nearest and next-to-nearest ferromagnetic Ising systems discussed in \cite{ABC} and in \cite{BC}, the choice of the parameter is driven by the knowledge of the ground states of the system.

In the present paper we consider discrete energies as in \eqref{intro:energy} satisfying an additional {\em head-to-tail symmetry condition}, namely 
\begin{eqnarray}\label{intro:head-to-tail}
g_\e^{\xi}(\e\alpha,-u,v)=g_\e^\xi (\e\alpha,u,v)=g_\e^{\xi}(\e\alpha,u,-v).
\end{eqnarray}
The condition above, which entails that antipodal vectors may not be distinguished energetically, motivates the choice of the order parameter. Even though ground states may exhibit complex microstructures, the description of the overall properties of the system can be described in terms of the De Gennes  {\em $Q$-tensor} associated to $u$; namely, $Q(u)=u\otimes u$ (see for instance \cite{DeGPro,MotNew}). 

Among the physical models driven by energies satisfying an {\em head-to-tail} symmetry condition a special role is played by nematics. In particular, energetic models belonging to the class we consider here are the so called lattice Maier-Saupe models, firstly introduced by Lebwohl and Lasher in \cite{LebLas} as a simplification of the celebrated mean-field Maier-Saupe model for liquid crystals (see \cite{MaiSau}). The lattice model introduced by Lebwohl and Lasher neglects the interaction between centers of mass of the molecules (these being fixed on a lattice) and penalizes only alignment. Even if it does not reproduce any liquid feature of nematics, this model has proved to be quite a good approximation of the general theory in the regime of high densities and at the same time less demanding from the computational point of view. For these reasons it has been subsequently widely generalized by many authors (some interesting development of the model are presented in \cite{Silvano,Zannoni,Kay}).  

With the choice of the $Q$-tensor order parameter the energy in \eqref{intro:energy} takes the form
\begin{eqnarray*}
F_\e(Q)=\sum_{\xi\in \zN }\sum_{\alpha\in R_\e^\xi(\Omega)} \e^N
f^\xi_\e(\e\alpha,Q(\e\alpha),Q(\e\al+\e\xi)),
\end{eqnarray*}
which underlines that in  \eqref{intro:energy} we can rewrite energies as depending only on $u\otimes u$.
Besides decay assumption on long-range interactions, we suppose that
the potentials $f_{\e}^{\xi}$ satisfy the {\it exchange symmetry} condition:
\begin{eqnarray*}
f_\e^\xi (\e\alpha,Q_1,Q_2)=f_\e^{-\xi}(\e\alpha+\e\xi,Q_2,Q_1),
\end{eqnarray*}
assumption \eqref{intro:head-to-tail} being now expressed by the structure of $Q$. An application of Theorems 3.4 and 5.3 in \cite{ACG} gives the integral-representation and homogenization results stated in Theorems \ref{32-th:disc-cont-Li-mean} and \ref{32-th:homog-Li}. In the particular case when $f_\e^\xi (\e\alpha,Q_1,Q_2)=f^\xi (Q_1,Q_2)$ does not depend on the space variable and $\e$, we obtain an integral-representation formula for the $\Gamma$-limit of $F_\e$ of the type
\begin{equation}\label{intro:fhom}
\Gamma\hbox{-}\lim_\e F_\e(Q)=\int_\Omega f_{\rm hom}(Q(x))\ dx,
\end{equation} 
where $f_{\rm hom}$ is given by an abstract asymptotic homogenization formula (see \eqref{32-eq:homog-form_li}). 
The advantages of this reformulation are several. The first one is that now the limit functional depends on the $Q$-tensor, thus the expected head-to-tail symmetry property of the continuum model is satisfied. Moreover, continuum models involving a $Q$-tensor variable have been the object of intense studies in recent years (see \cite{BalZar} and \cite{ZarLN}) so that the systems we study may be seen as a discrete approximation of some models of this kind. Furthermore, from a technical point of view, the understanding of the algebraic structure of the space of the $Q$-tensors proves to be useful for better characterizing the limit energies in many special cases under different scalings. This is indeed the main object of this paper and to it we devote the last part of the introduction. 

Starting from the abstract formula \eqref{intro:fhom} and inspired by some of the above-mentioned physical models, our first purpose is to characterize $f_{\rm hom}$ for special choices of $g_\e^\xi$. In Section \ref{2d} we give a complete analysis in the case of planar nearest-neighbor interactions; i.e,
when $g_\e^\xi$ is non zero only for $|\xi|=1$, and for such $\xi$ we have $g_\e^\xi(u,v)= f(u,v)$. In particular in Theorem \ref{theorem:nn_general} we prove that $f_{\rm hom}=4\widehat f^{**}$, where $\widehat f^{**}$ is the convex envelope of $\widehat f$ defined by the relaxation formula
\begin{equation}\label{fhat0}
\widehat f(Q):=
\begin{cases}
f(u,v)&\text{if }\displaystyle \frac{u\otimes u+v\otimes v}{2}=Q\neq\frac{1}{2}I\\
\displaystyle \min\bigl\{f(u,v):\ u,\ v\in S^1, \ u\cdot v=0\bigr\} &\text{if }Q=\frac12 I. 
\end{cases}
\end{equation} 
An interesting feature of this formula is that the continuum energy of the macroscopically unordered state $Q=\frac12 I$ is obtained by approximating $Q$ at a microscopic level following an optimization procedure among all the possible pairs of orthogonal vectors $u,v\in S^1$. In the anisotropic case, this can be read as a selection criterion at the micro-scale whenever the minimum in formula \eqref{fhat0} is not trivial. In the homogeneous and isotropic case, instead, that is when $f$ is such that
$$
f(Ru,Rv)=f(u,v)
$$ 
for all $u,v\in S^1$ and all $R\in SO(2)$, the formula for $f_{\rm hom}$ can be further simplified and it only involves the relaxation of a function of the scalar variable $|Q-\frac12 I|$. This radially symmetric energy functional penalizes the distance of a microscopic state to the unordered one. The proof of this result is based on a dual-lattice approach and strongly makes use of a characterization of the set of $Q$-tensors in dimension two, which turns out to be `compatible' with the structure of two-point interactions on a square lattice (see Propositions \ref{K_prop} and \ref{formula_magica}). 

The extension of the arguments of Section \ref{2d} to the three-dimensional case is not straightforward due to the much more complex structure of the space of $Q$-tensors in higher dimensions (see Lemma~\ref{decomp3}). We were able to find a cell formula for $f_{\rm hom}$ only for a particular class of energies. Indeed, our dual-lattice approach fits quite well with energies depending on the set of values that the vector field $u$ takes on the $4$ vertices of each face of a cubic cell, independently of their order. Two-body type potentials giving raise to this special energy structure necessarily involve nearest and next-to-nearest neighbor interactions satisfying the special relations that we consider in Section \ref{3d}.

In the last section we further analyze the planar case by considering different scalings of homogeneous and isotropic
energies whose bulk limit provides little information on the microscopic structure of the ground states. In Theorem \ref{gradterm} we study a class of nearest-neighbor interaction energies and prove that their $\Gamma$-limit is an integral functional whose energy density is proportional to the squared modulus of the $Q$-tensor. Our result contains as a special case the analysis of the well-known  Lebwohl-Lasher model of nematics, in which case we prove that the energy favors a uniform distribution of vector fields at the microscopic scale. Another class of energies is analyzed in Theorem~\ref{thm:oscillating}. Here, as a consequence of the competition between nearest and next-to-nearest interactions, the $\Gamma$-limit, 
while again proportional to the squared modulus of $Q$, favors oscillating microscopic configurations. In both the above cases the limit energy, of the form
$$
\frac{\gamma}{s^2}\int_\Om|\nabla Q(x)|^2\ dx,
$$
can be interpreted as the cost of unit spatial variations of $Q$ on the sub-manifold $|Q-\frac12 I|=\frac{\sqrt{2}}{2} s$ of the space of $Q$-tensors. Being the pre-factor an increasing function of the distance of $Q$ from the set of ordered states, this energy can be interpreted as a measure of the microscopic disorder of the system.  In the analysis done in Section \ref{sec:grad} a crucial role is played by the assumption of simple connectedness of the domain $\Om$:  in that case one may take advantage of the orientability of Sobolev $Q$-tensor fields having constant Frobenius norm
(as clarified in Remark \ref{remark:approx}). In the last subsection of the paper we describe some of the features of head-to-tail symmetric spin systems under scalings allowing for the emergence of topological singularities. To that end we focus on the Lebwohl-Lasher model under a logarithmic type scaling (for the analysis of more general long-range models see Remark \ref{rem:concentration-longrange}). Namely, we consider
$$
E_\e(u)=\frac{1}{|\log \e |}\sum_{|i-j|=1}(1-|(u_i\cdot u_j)|^2)
$$
and prove that in an appropriate topology its $\Gamma$-limit leads to concentration on point singularities. From a microscopical point of view (see Figure \ref{fig_vortex}) this asserts that microscopic ground states look like a finite product of complex maps with half-integer singularities. Note that discrete systems under concentration scalings have been recently studied also in \cite{ACXY}, \cite{ACP}, \cite{AP} and \cite{ADLGP}. 


\section{Notation and Preliminaries}
Throughout the paper $\Om\subset\Rn$ is a bounded open set with Lipschitz boundary. Further hypotheses on $\Om$ will be specified when necessary.
We define $\Z_\e(\Omega)$ as the set of points $i \in \Z^N$ such that $\e i \in \Om$.
The $N$-dimensional reference cube $\bigl[-\frac12, \frac12\bigr)^N$ of $\Rn$ is denoted by $W_N$. The set $\Om_\e$ is then defined as the union over all $i \in \Z^N$, of all the cubes $\e \{i+W_N\}$ such that $\e \{i+W_N\} \subset \subset \Om$. 
In the case $N=2$, which we will be mainly concerned with, we use the shorthand $W$ in place of $W_2$.
The standard norms in euclidean spaces will be always denoted by $|\cdot|$; this holds in particular for the euclidean norm on $\R^N$ as well as for the Frobenius norm on the space of $N\times N$ matrices $\mathbb M^{N\times N}$ and on the subspace of $N\times N$ symmetric matrices $\Mnn$. For these given metrics, $B(x,\rho)$ and $B(Q,\rho)$ will denote the open balls of radius $\rho>0$ centered at $x\in \R^N$, and $Q\in \Mnn$, respectively. 
The symbol $S^{N-1}$ stands as usual for the unit sphere of $\R^N$.

Given two vectors $a$ and $b$ in $\Rn$, the tensor product $a\otimes b$ is the $N\times N$ matrix componentwise defined by $(a\otimes b)_{ij}=a_i b_j$ for all $i, j=1,\dots, N$. Note that, if $c$ and $d$ are also vectors in $\R^N$, 
\begin{equation}\label{prodotti}
(a\cdot c)(b\cdot d)=(a\otimes b):(c\otimes d)\,.
\end{equation}
Here $\cdot$ is the euclidean scalar product, while $:$ is the scalar product between matrices inducing the Frobenius norm. In particular we have
\begin{equation}\label{modul}
|a\otimes b|=|a||b|\,.
\end{equation}
Furthermore, the action of the matrix $a\otimes b$ on a vector $c$ satisfies
\begin{equation}\label{azione}
(a\otimes b)c= (b\cdot c)a.
\end{equation}
We will make often use of the following tensor calculus identity.
\begin{proposition}
Let $u\colon \Om \to S^{N-1}$ be a $C^1$ function. Then
\begin{equation}\label{fattoregiusto}
|\nabla (u\otimes u)(x)|^2= 2|\nabla u(x)|^2
\end{equation}
for all $x\in \Om$.
\end{proposition}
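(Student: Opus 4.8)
The plan is to reduce the identity to the pointwise algebraic relations \eqref{prodotti} and \eqref{modul} by differentiating in each coordinate direction separately. Writing $\partial_k$ for the partial derivative with respect to the $k$-th coordinate, $k=1,\dots,N$, I note that the squared Frobenius norm of the third-order tensor $\nabla(u\otimes u)(x)$ splits as $|\nabla(u\otimes u)|^2=\sum_{k=1}^N|\partial_k(u\otimes u)|^2$, and likewise $|\nabla u|^2=\sum_{k=1}^N|\partial_k u|^2$. Hence it suffices to prove the per-direction identity $|\partial_k(u\otimes u)|^2=2|\partial_k u|^2$ for each fixed $k$ and then sum over $k$.

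For fixed $k$ I would apply the Leibniz rule to the bilinear map $(a,b)\mapsto a\otimes b$, obtaining $\partial_k(u\otimes u)=(\partial_k u)\otimes u+u\otimes(\partial_k u)$. Expanding the squared Frobenius norm of this sum produces three contributions: the two diagonal terms $|(\partial_k u)\otimes u|^2$ and $|u\otimes(\partial_k u)|^2$, plus twice the mixed term $\big((\partial_k u)\otimes u\big):\big(u\otimes(\partial_k u)\big)$. The diagonal terms are evaluated by \eqref{modul}: since $|u|=1$, each equals $|\partial_k u|^2|u|^2=|\partial_k u|^2$. The mixed term is rewritten with \eqref{prodotti}, taking $a=\partial_k u$, $b=u$, $c=u$, $d=\partial_k u$, which gives $\big((\partial_k u)\otimes u\big):\big(u\otimes(\partial_k u)\big)=(\partial_k u\cdot u)(u\cdot\partial_k u)=(u\cdot\partial_k u)^2$.

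The decisive step is the vanishing of this mixed term, and it is the only point where the constraint $u\in S^{N-1}$ enters in an essential way. Differentiating the pointwise identity $|u(x)|^2=1$ in the $k$-th direction yields $2\,u\cdot\partial_k u=0$, so $(u\cdot\partial_k u)^2=0$. Consequently $|\partial_k(u\otimes u)|^2=|\partial_k u|^2+|\partial_k u|^2=2|\partial_k u|^2$, and summation over $k$ gives \eqref{fattoregiusto}. I do not expect any genuine obstacle: all the content is the orthogonality $u\perp\partial_k u$ forced by the unit-length constraint, which is exactly what annihilates the cross term and leaves the clean factor $2$.
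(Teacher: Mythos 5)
Your argument is correct, but it follows a genuinely different route from the paper's. You apply the Leibniz rule to get $\partial_k(u\otimes u)=(\partial_k u)\otimes u+u\otimes(\partial_k u)$, evaluate the two diagonal terms via \eqref{modul} and the cross term via \eqref{prodotti}, and kill the cross term by the orthogonality $u\cdot\partial_k u=0$ coming from $|u|^2\equiv 1$. The paper instead never differentiates the product: it writes $|\nabla(u\otimes u)(x)|^2$ as a limit of difference quotients, uses the exact algebraic identity $|u\otimes u-w\otimes w|^2=2\bigl(1-(u\cdot w)^2\bigr)$ for unit vectors (again a consequence of \eqref{prodotti} and \eqref{modul}), factors $1-t^2=(1-t)(1+t)$, and recognizes $2\bigl(1-u(x+he_i)\cdot u(x)\bigr)=|u(x+he_i)-u(x)|^2$, so that the factor $1+u(x+he_i)\cdot u(x)\to 2$ produces the constant. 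Your version isolates more transparently where the constraint $u\in S^{N-1}$ enters (it is exactly what annihilates the mixed term), and it is the more standard calculus computation; the paper's finite-difference version has the advantage of reusing verbatim the discrete identity $|Q_i-Q_j|^2=2\bigl(1-(u_i\cdot u_j)^2\bigr)$ that recurs throughout the rest of the paper (for instance in the estimates leading to \eqref{fine} and \eqref{identita_gradienti}), so the two proofs of \eqref{fattoregiusto} and the discrete energy comparisons are visibly instances of the same manipulation. Both arguments are complete and valid for $C^1$ maps.
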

\begin{proof}
Denoting with $e_i$ with $i=1,\dots, N$ the vectors of the canonical basis of $\Rn$ and using \eqref{prodotti} and \eqref{modul}, one has
\begin{eqnarray*}
 |\nabla (u\otimes u)(x)|^2
&=&\lim_{h\to 0}\sum_{i=1}^N\Big|\frac{(u\otimes u)(x+he_i)-(u\otimes u)(x)}{h}\Big|^2
\\
&=&
2\lim_{h\to 0}\sum_{i=1}^N\frac{1-(u(x+he_i)\cdot u(x))^2}{h^2}\\
&=&2\lim_{h\to 0}\sum_{i=1}^N\frac{1-(u(x+he_i)\cdot u(x))}{h^2}[1+(u(x+he_i)\cdot u(x))]
\\
&=&
\lim_{h\to 0}\sum_{i=1}^N\Big|\frac{u(x+he_i)-u(x)}{h}\Big|^2[1+(u(x+he_i)\cdot u(x))]=2|\nabla u(x)|^2\,,
\end{eqnarray*}
where we also took into account that $u(x+he_i)\cdot u(x) \to |u(x)|^2=1$ as $h\to 0$.
\end{proof}

\smallskip


In Section \ref{concentration} we will consider the distributional Jacobian $Jw$ of a function $w\in W^{1,1}(\Omega;\R^2)\cap L^\infty(\Omega;\R^2)$. It is defined through its action on test functions $\phi \in C^{0,1}_c(\Omega)$, the space of Lipschitz continuous functions on $\Omega$ with compact support, as follows:
\begin{equation}\label{jwds}
\langle J w, \varphi\rangle= - \int_{\R^2} w_1 (w_2)_{x_2} \varphi_{x_1} - w_1
(w_2)_{x_1} \varphi_{x_2} \, dx.
\end{equation}
It is not difficult to see that $w\mapsto Jw$ is continuous as a map from $W^{1,1}(\Omega;\R^2)\cap L^{\infty}(\Omega;\R^2)$ to the dual of  $C^{0,1}_c(\Omega)$; moreover, if additionally $w\in W^{1,2}(\Omega;\R^2)$, then $Jw \in L^1(\Om)$ and 
we recover the usual definition of Jacobian as $Jw={\rm det }\nabla w$.

\section{The energy model: the bulk scaling}\label{section:abstract}



Given $\Omega\subset\R^N$ and $\e>0$, we consider a
pairwise-interacting discrete system on the lattice $\Z_\e(\Omega)$ whose state variable is denoted by  
$u:\e\Z_\e(\Omega)\to\R^N$. Such a system is driven by an energy $E_\e:\R^N\to(-\infty,+\infty)$ given by
\begin{eqnarray*}\label{32-eq:ener-old-var}
E_\e(u)=\sum_{\alpha,\beta \in \Z_\e(\Omega)} \e^N
e_\e(\e\alpha,\e\beta,u(\e\alpha),u(\e\beta))
\end{eqnarray*}
for some energy density
$e_\e:(\e\Z_\e(\Omega))^2\times\R^{2N}\to\R$.
We observe that there is no loss of generality in assuming the
interactions symmetric. This symmetry condition is expressed by the
formula $$e_\e (\e\alpha,\e\beta,u,v)=e_\e (\e\beta,\e\alpha,v,u)$$ (note
that, otherwise, one could deal with $\tilde{e}_\e
(\e\alpha,\e\beta,u,v)= \frac{1}{2}(e_\e (\e\beta,\e\alpha,v,u)+e_\e
(\e\alpha,\e\beta,u,v))$). 
A key feature of our model is its {\em orientational symmetry}; i.e. the systems we consider are characterized by the property that one cannot distinguish a state from its antipodal. From the point of view of the energy, this translates into the following condition:
\begin{equation}\label{def:symmetry}
e_\e (\e\alpha,\e\beta,u,v)=e_\e (\e\alpha,\e\beta,u,-v).
\end{equation}

In the following we find it useful to rewrite the energy by a change
of variable. Given $\xi\in\zN$ we define:
\begin{equation}
g_\e^\xi(\e\al,u,v):=e_\e(\e\al,\e\al+\e\xi,u,v)
\end{equation}
and we have
\begin{eqnarray*}
E_\e(u)=\sum_{\xi\in \zN }\sum_{\alpha\in R_\e^\xi(\Omega)} \e^N
g^\xi_\e(\e\alpha,u(\e\alpha),u(\e\al+\e\xi)),
\end{eqnarray*}
with $R_\e^\xi(\Omega):=\{\a\in\Z_\e(\Om):\ \a+\xi\in\Z_\e(\Om)\}$.
Note that, in the current variables, the symmetry conditions read
\begin{eqnarray}
g_\e^\xi (\e\alpha,u,v)&=&g_\e^{-\xi}(\e\alpha+\e\xi,v,u)\label{hsymmetry1}\\
g_\e^\xi (\e\alpha,u,v)&=&g_\e^{\xi}(\e\alpha,u,-v)\label{hsymmetry2}
\end{eqnarray}
Notice that the two above equations also imply that 
\begin{equation}\label{hsymmetry1bis}
g_\e^\xi (\e\alpha,-u,v)=g_\e^\xi (\e\alpha,u,v)=g_\e^\xi (\e\alpha,-u,-v).
\end{equation}
Indeed, by \eqref{hsymmetry1} and \eqref{hsymmetry2}
$$
g_\e^\xi (\e\alpha,-u,v)=g_\e^{-\xi} (\e\alpha+\e\xi,v,-u)=g_\e^{-\xi} (\e\alpha+\e\xi,v,u)=g_\e^{\xi} (\e\alpha,u,v)
$$
and the other equality can be proven similarly.
\subsection {Q-theory - $L^\infty$ energies}
In the rest of the paper we will be concerned with energies defined on $S^{N-1}$-valued functions $u:\e\Z_\e(\Omega)\to S^{N-1}$. In this case one can regard these energies as defined on tensor products of the type $Q(u)=u\otimes u$. More precisely, for all $\e,\ \alpha,\ u,\ v$ we will write 
\begin{equation}\label{Q_energy}
f_\e^\xi (\e\alpha,Q(u),Q(v)):=g_\e^\xi (\e\alpha,u,v).
\end{equation}
This identification is not ambiguous because of \eqref{hsymmetry1bis}, \eqref{hsymmetry2} and the following proposition.
\begin{proposition}
Let $u,w\in S^{N-1}$. Then $Q(u)=Q(w)$ if and only if $u=\pm w$.
\end{proposition}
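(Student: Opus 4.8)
The plan is to prove the two implications separately, the reverse direction being immediate. If $u=\pm w$, then $Q(u)=(\pm w)\otimes(\pm w)=w\otimes w=Q(w)$, since the two sign changes cancel; this uses only bilinearity of the tensor product. So the substance lies in the forward implication: assuming $u\otimes u=w\otimes w$ with $u,w\in S^{N-1}$, I must deduce $u=\pm w$.

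For the forward direction, the cleanest approach is to exploit the action identity \eqref{azione}, namely $(a\otimes b)c=(b\cdot c)a$. Applying the matrix equality $u\otimes u=w\otimes w$ to the vector $u$ itself gives
\begin{equation*}
(u\otimes u)u=(w\otimes w)u,\qquad\text{i.e.}\qquad (u\cdot u)\,u=(w\cdot u)\,w.
\end{equation*}
Since $|u|=1$ we have $u\cdot u=1$, so this reads $u=(w\cdot u)\,w$. Taking norms and using $|w|=1$ yields $1=|u|=|w\cdot u|$, so that $w\cdot u=\pm 1$. Substituting back, $u=(w\cdot u)w=\pm w$, which is exactly the claim.

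I expect the main (and only mild) obstacle to be making sure the scalar $w\cdot u$ is correctly identified, i.e.\ confirming that $|w\cdot u|=1$ forces $w\cdot u=\pm 1$ rather than merely bounding it; this is the Cauchy--Schwarz equality case but here follows directly from $1=|w\cdot u||w|=|w\cdot u|$. An alternative, equally short route avoids \eqref{azione} altogether: apply both sides of $u\otimes u=w\otimes w$ to any vector orthogonal to $w$ to see that $u$ must be parallel to $w$, then normalize. I would present the action-based argument as the primary one, since it is a one-line computation once \eqref{azione} is invoked, and the hypotheses $u,w\in S^{N-1}$ are used precisely at the normalization step.
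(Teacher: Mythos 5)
Your proof is correct, but it takes a different route from the paper's. The paper computes the Frobenius norm of the difference directly: using \eqref{prodotti} and \eqref{modul} one gets $|Q(u)-Q(w)|^2=|u\otimes u-w\otimes w|^2=2\bigl(1-(u\cdot w)^2\bigr)$, so $Q(u)=Q(w)$ holds if and only if $(u\cdot w)^2=1$, which for unit vectors is exactly $u=\pm w$. That single identity delivers both implications at once, and moreover it is the same quantitative formula that reappears later in the paper (for instance in the proof of Proposition \ref{K_prop} and in the computation of $|\nabla(u\otimes u)|^2$), so the paper's choice is economical in context. Your argument instead applies the matrix identity $u\otimes u=w\otimes w$ to the vector $u$ via \eqref{azione}, obtaining $u=(w\cdot u)w$ and then normalizing; this settles the forward direction cleanly, with the reverse direction handled separately as a triviality. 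Both arguments are complete and elementary; yours is slightly more "linear-algebraic" (eigenvector-style), while the paper's is metric and yields a reusable distance formula as a by-product.
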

\begin{proof}
By the definition of $Q$, \eqref{prodotti} and \eqref{modul} we have that
\begin{equation*}
|Q(u)-Q(w)|^2=|u\otimes u-w\otimes w|^2=2(1-(u\cdot w)^2).
\end{equation*}
Therefore $Q(u)=Q(w)$ if and only if $(u\cdot w)^2=1$. Since $u,w\in S^{N-1}$ this is equivalent to the statement.
\end{proof}

The choice of the variables in \eqref{Q_energy} will prove to be very useful in the following analysis and corresponds to the usual de Gennes $Q$-tensor approach to liquid crystals (see \cite{DeGPro}). In these variables the two symmetry conditions \eqref{hsymmetry1} and \eqref{hsymmetry2} reduce only to the first one \eqref{hsymmetry1} which now reads as
\begin{eqnarray}\label{fsymmetry}
f_\e^\xi (\e\alpha,Q(u),Q(v))=f_\e^{-\xi}(\e\alpha+\e\xi,Q(v),Q(u)),
\end{eqnarray}
the second symmetry condition being entailed by the structure of the tensor variable.
We define $S^{N-1}_\otimes\subset \Mnn$ as $S^{N-1}_\otimes:=\{Q(v):\ v\in S^{N-1}\}$.
Note that by \eqref{modul} we have $|Q|=1$ for all $Q\in S^{N-1}_\otimes$.
We then identify every $u\colon \e\Z_\e(\Om) \mapsto S^{N-1}$ with $Q(u)\colon \e\Z_\e(\Om)\mapsto S^{N-1}_\otimes$. Furthermore, to the latter we associate a piecewise-constant interpolation belonging to the class
\begin{equation}\label{cepsB}
C_\e(\Om; S^{N-1}_\otimes):=\{Q \colon \Om \to S^{N-1}_\otimes: Q(x)=Q(\e i)\,\,\hbox{ if } x \in \e\{i+W_N\},\,i \in \Z_\e(\Om)\}. 
\end{equation}
As a consequence we may see the family of energies $E_\e$ as defined on a subset
of $L^\infty(\Omega,\Mnn)$ and consider their extension on
$L^\infty(\Omega,\Mnn)$ through the family of functionals
$F_\e:L^\infty(\Omega,\Mnn)\to(-\infty,+\infty]$ defined as
\begin{eqnarray}\label{eq:energie-eps}
F_\e(Q)=\begin{cases}\displaystyle \sum\limits_{\xi\in \zN }
\sum\limits_{ \alpha\in R_\e^\xi(\Omega)}\e^N f_\e^\xi
(\e\alpha,Q(\e\alpha),Q(\e\alpha+\e\xi))
 & \text{if $Q\in C_\e(\Omega,S^{N-1}_\otimes)$}\cr
+\infty & \text{otherwise.}\cr\end{cases}
\end{eqnarray}

\medskip

We make the following set of hypotheses on the family of functions 
$f_\e^\xi:\e\Z_\e(\Om)\times\Mnn\times\Mnn\to(-\infty,+\infty]$:
\begin{itemize}
\item[(H1)]For all $\alpha$, $\xi$ and $\e$, $f_\e^\xi$ satisfies \eqref{fsymmetry},\\
\item[(H2)] For all $\alpha$, $\xi$ and $\e$, $f_\e^\xi(\e\alpha,Q_1,Q_2)=+\infty$ if $(Q_1,Q_2)\notin S^{N-1}_\otimes\times S^{N-1}_\otimes$,
\item[(H3)] For all $\alpha$, $\xi$ and $\e$, there exists $C_{\e,\alpha}^\xi \geq 0$ such that
\begin{eqnarray*}
&&|f^{\xi}_\e (\e\alpha,Q_1,Q_2)| \leq C_{\e,\alpha}^\xi\quad \hbox{ for
all } Q_1,Q_2\in S^{N-1}_\otimes,\cr\cr &&\dps{\limsup_{\e \to 0}
 \sup_{\alpha\in\Z_\e(\Omega)}\sum_{\xi\in \mathbb{Z}^N} C_{\e,\alpha}^\xi<\infty},
\end{eqnarray*}
\item[(H4)] for all $\delta >0$, there exists $M_\delta>0$ such that
\begin{eqnarray*}
\dps{\limsup_{\e\to 0} \sup_{\alpha\in\Z_\e(\Omega)}\sum_{|\xi|\geq
M_\delta} C_{\e,\alpha}^\xi\leq \delta  }.
\end{eqnarray*}
\end{itemize}

In what follows we will also use a localized version of the functional $F_\e$, defined below. Let ${\mathcal A}(\Om)$ be the class of all open subset of $\Omega$. For every $A\in{\mathcal A}(\Om)$ we set

\begin{eqnarray}\label{eq:energie-eps-loc}
F_\e(Q,A)=\begin{cases}\displaystyle\sum\limits_{\xi\in \zN }\sum\limits_{ \alpha
\in R_\e^\xi(A)}\e^N f_\e^\xi
(\e\alpha,Q(\e\alpha),Q(\e\alpha+\xi))
 & \text{if $Q\in C_\e(\Omega;S^{N-1}_\otimes)$}\cr
+\infty & \text{otherwise.}\cr\end{cases}
\end{eqnarray}



\subsection{General integral representation theorems}

In this section we state a general compactness and integral representation result for the functionals $F_\e$ defined in \eqref{eq:energie-eps-loc}. To that end we will need the following characterization of the convex envelope of $S^{N-1}_\otimes$.

\begin{proposition}\label{convex envelope} The convex envelope of $S^{N-1}_\otimes$ is given by the set
\begin{equation}\label{convex}
K:=\{Q \in \Mnn: Q\geq 0,\ {\rm tr}\,Q=1\}.
\end{equation}
\end{proposition}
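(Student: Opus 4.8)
The plan is to establish $\mathrm{co}(S^{N-1}_\otimes)=K$ by a double inclusion, the substance of the statement being that $K$ is the convex hull of its rank-one extreme points — the real symmetric analogue of the characterization of density matrices. The only nontrivial ingredient I would invoke is the spectral theorem for symmetric matrices; everything else is bookkeeping with the identities \eqref{prodotti}, \eqref{modul} and \eqref{azione}.

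First I would record that $K$ is convex: the condition $Q\geq 0$ defines the convex cone of positive semidefinite matrices, and $\{{\rm tr}\,Q=1\}$ is an affine hyperplane in $\Mnn$, so their intersection $K$ is convex (and, being closed and bounded, since for $Q\geq 0$ one has $|Q_{ij}|\leq\sqrt{Q_{ii}Q_{jj}}$ while the nonnegative diagonal entries sum to ${\rm tr}\,Q=1$, also compact). Next, the inclusion $S^{N-1}_\otimes\subseteq K$ is immediate: for $v\in S^{N-1}$ and any $x\in\R^N$, using \eqref{azione} one computes $x\cdot(v\otimes v)x=(v\cdot x)^2\geq 0$, so $v\otimes v\geq 0$, while by \eqref{modul} we get ${\rm tr}\,(v\otimes v)=|v|^2=1$. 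Hence $v\otimes v\in K$, and since $K$ is convex it contains the convex envelope, giving $\mathrm{co}(S^{N-1}_\otimes)\subseteq K$.

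The main step is the reverse inclusion $K\subseteq\mathrm{co}(S^{N-1}_\otimes)$, where the spectral theorem does the work. Given $Q\in K\subset\Mnn$, I would diagonalize it in an orthonormal basis of eigenvectors $v_1,\dots,v_N\in S^{N-1}$ with real eigenvalues $\lambda_1,\dots,\lambda_N$, so that $Q=\sum_{i=1}^N\lambda_i\,v_i\otimes v_i$. Positive semidefiniteness forces $\lambda_i\geq 0$, and taking traces gives $\sum_{i=1}^N\lambda_i={\rm tr}\,Q=1$. Since each $v_i\otimes v_i$ lies in $S^{N-1}_\otimes$, this decomposition exhibits $Q$ as a convex combination of elements of $S^{N-1}_\otimes$, whence $Q\in\mathrm{co}(S^{N-1}_\otimes)$. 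Combining the two inclusions yields the claim.

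I do not anticipate a genuine obstacle: the only point requiring care is to have the convexity (and, if one wants the envelope to be closed, the compactness) of $K$ in place before passing from the pointwise inclusion $S^{N-1}_\otimes\subseteq K$ to $\mathrm{co}(S^{N-1}_\otimes)\subseteq K$. The conceptual heart of the statement — that $K$ has no extreme points beyond the rank-one projections — is exactly what the spectral decomposition makes transparent, since it represents every $Q\in K$ using only rank-one projectors $v_i\otimes v_i$ with nonnegative weights summing to one.
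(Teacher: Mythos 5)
Your proposal is correct and follows essentially the same route as the paper's own proof: both establish convexity of $K$, note the inclusion $S^{N-1}_\otimes\subseteq K$, and then use the spectral decomposition $Q=\sum_{i}\lambda_i\,e_i\otimes e_i$ with nonnegative eigenvalues summing to one (by the trace condition) to exhibit every $Q\in K$ as a convex combination of rank-one projectors. The extra remarks you include (explicit verification of the first inclusion, compactness of $K$) are harmless additions to the same argument.
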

\begin{proof}
Note that $K$ is convex and that it contains  the convex envelope of $(S^{N-1}_\otimes)$. It therefore remains to show that every matrix in $K$ can be represented as a convex combination of matrices in $S_\otimes^{N-1}$. For every $Q\in K$, by the symmetry and the positive semidefiniteness of $Q$, we may consider its ordered  eigenvalues $0\leq\lambda_1\leq\lambda_2\leq\dots\leq\lambda_N$. By the trace condition we have that 
\begin{equation}\label{convex envelope: trace}
\sum_{i=1}^N\lambda_i=1.
\end{equation}
On the other hand we may represent $Q$ as
\begin{equation}\label{convex envelope: representation}
Q=\sum_{i=1}^N\lambda_ie_i\otimes e_i,
\end{equation}
where $\{e_1,e_2,\dots,e_N\}$ is an orthonormal basis in $\Rn$ and each of the $e_i$ is an eigenvector relative to $\lambda_i$. Since $\lambda_i\geq 0$, combining \eqref{convex envelope: trace} and \eqref{convex envelope: representation} we conclude the proof.
\end{proof}

The following
$\Gamma$-convergence result holds true.

\begin{theorem}[compactness and integral representation]\label{32-th:disc-cont-Li-mean}
Let $\{ f_\e^{\xi}\}$ satisfy hypotheses {\rm (H1)--(H4)}, and let $K$ be given by {\rm(\ref{convex})}. Then, for every sequence $\e_j$ converging to zero, there exists a subsequence (not relabelled) and a Carath\'eodory function $f:\Omega \times
K\to\R$ convex in the second variable such that
the functionals
$F_{\e_{j}}$ $\Gamma$-converge with respect to
the weak$^*$-convergence of $L^{\infty}(\Omega,\Mnn)$ to the
functional $F:L^{\infty}(\Omega,\Mnn)\to(-\infty,+\infty]$ given by
\begin{eqnarray*}
F(Q)=\begin{cases}\displaystyle \int_\Omega f(x,Q(x))\ dx&\text{if $Q\in
L^{\infty}(\Omega;K)$},\cr +\infty& otherwise.\end{cases}
\end{eqnarray*}
\end{theorem}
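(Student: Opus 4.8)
The plan is to recognize that, after the change of variables $u\mapsto Q(u)=u\otimes u$, the functionals $F_\e$ are a special instance of the discrete spin-type energies treated abstractly in \cite{ACG}, and to deduce the statement by checking that hypotheses (H1)--(H4) entail the structural assumptions of the compactness and integral representation theorem (Theorem 3.4 there). Concretely, I would regard the order parameter as a function taking values in the compact set $S^{N-1}_\otimes\subset\Mnn$, viewing $\Mnn$ as a finite-dimensional Euclidean space endowed with the Frobenius norm. The energies \eqref{eq:energie-eps-loc} are then exactly of the form covered by the abstract theory: a sum over lattice translations $\xi$ of pairwise interaction densities $f_\e^\xi$, localized on $A\in{\mathcal A}(\Om)$, whose natural domain after piecewise-constant interpolation is the class $C_\e(\Om;S^{N-1}_\otimes)$.

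The core of the proof is the translation of the hypotheses. Hypothesis (H2) encodes the hard constraint that admissible configurations take values in $S^{N-1}_\otimes$, playing the role of the target-set constraint in \cite{ACG}; (H1), i.e.\ \eqref{fsymmetry}, is the exchange symmetry that permits a consistent definition of the interactions and is used to symmetrize the energy; and (H3)--(H4) provide, respectively, the uniform equiboundedness and summability of the interaction strengths $C_{\e,\alpha}^\xi$ and the uniform decay of the long-range contributions as $|\xi|\to\infty$. These are precisely the coerciveness, growth and decay conditions under which the abstract compactness and integral representation result applies. Invoking it along a sequence $\e_j\to 0$ yields a subsequence and a Carath\'eodory integrand $f$, convex in the second variable, such that $F_{\e_j}$ $\Gamma$-converges, in the weak$^*$ topology of $L^\infty(\Om;\Mnn)$, to an integral functional of the announced form.

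It then remains to identify the effective domain of the $\Gamma$-limit. Since the discrete configurations take values in $S^{N-1}_\otimes$ and $\Gamma$-convergence with respect to a weak topology naturally relaxes onto the closed convex hull of the constraint set, the limit functional is finite exactly on $L^\infty(\Om;\overline{\mathrm{co}}(S^{N-1}_\otimes))$ and equals $+\infty$ elsewhere. Here I would invoke Proposition \ref{convex envelope}, which identifies $\overline{\mathrm{co}}(S^{N-1}_\otimes)$ with $K=\{Q\in\Mnn:\ Q\geq 0,\ {\rm tr}\,Q=1\}$, thereby giving the stated domain $L^\infty(\Om;K)$ and the finiteness of $f$ on $\Om\times K$.

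The step I expect to require the most care is verifying that the tensorial reformulation genuinely falls within the scope of \cite{ACG}: one must make sure that passing from the $S^{N-1}$-valued spin field $u$ to the matrix-valued field $Q(u)$ is compatible with the abstract framework. This relies on the preceding proposition ($Q(u)=Q(w)$ iff $u=\pm w$) together with the head-to-tail symmetry \eqref{hsymmetry1bis}, which guarantee that the densities $f_\e^\xi$ in \eqref{Q_energy} are well defined, and on the fact that the weak$^*$-$L^\infty$ topology on the matrix-valued interpolants is the correct convergence under which the equi-coerciveness furnished by (H3) yields compactness of sequences with equibounded energy. Once this identification is secured, the remainder is a direct application of the cited abstract theorem.
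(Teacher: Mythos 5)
Your proposal is correct and follows essentially the same route as the paper: the authors likewise prove the theorem by identifying $\Mnn$ with $\R^{N(N+1)/2}$ so that the energies fall under the scope of Theorem 3.4 of \cite{ACG}, and then use Proposition \ref{convex envelope} to identify the convex hull of $S^{N-1}_\otimes$ with $K$ and hence the effective domain $L^\infty(\Omega;K)$. Your additional remarks on why the tensorial reformulation is unambiguous (via \eqref{hsymmetry1bis} and the proposition that $Q(u)=Q(w)$ iff $u=\pm w$) are consistent with the discussion the paper carries out before stating the theorem.
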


\begin{proof}
The proof follows from \cite[Theorem 3.4]{ACG} upon identifying $\Mnn$ with $\R^{N(N+1)/2}$ via the usual isomorphism, and by the characterization of the convex envelope of $S^{N-1}_\otimes$ in Proposition \ref{convex envelope}.
\end{proof}
\medskip

In what follows we state a homogenization problem for our type of energies.

Let $k\in \mathbb{N}$ and for any $\xi \in \mathbb{Z}^N$, let
$f^\xi:\mathbb{Z}^N\times S^{N-1}_\otimes \times S^{N-1}_\otimes\to
\mathbb{R}$ be such that $f^\xi(\cdot,Q_1,Q_2)$ is $[0,k]^N$-periodic
for any $Q_1,Q_2\in  S^{N-1}_\otimes$. We then set
\begin{equation}\label{32-eq:hypo-perio}
f^\xi_\e(\e\alpha,Q_1,Q_2):=f^\xi(\alpha,Q_1,Q_2).
\end{equation}
In this case, hypotheses (H2), (H3), (H4) read:
\begin{itemize}
\item[(H2')] For all $\alpha$ and $\xi$,
$\dps{f^\xi(\alpha,Q_1,Q_2)=+\infty}$ if $(Q_1,Q_2)\not\in S^{N-1}_\otimes\times S^{N-1}_\otimes$.
\item[(H3')] For all $\alpha$ and $\xi$, there exists $C^\xi\geq 0$ such that
$\dps{|f^\xi(\alpha,Q_1,Q_2)|\leq C^\xi}$ for all $Q_1,Q_2\in S^{N-1}_\otimes$, and
$\sum_\xi C^\xi <\infty$.
\end{itemize}
We now introduce the notion of discrete average.
\begin{definition}
For any $A\subset\Omega$, $\e>0$, and $Q\in C_\e(\Omega,S^{N-1}_\otimes)$, we
set
$$\langle Q \rangle^{d,\e}_A=\frac{1}{\#(\e \zN\cap A)}\dps{\sum_{\alpha  \in \e \mathbb{Z}^N\cap A}Q(\alpha)}$$
(in this notation $d$ stands for {\em discrete}).
\end{definition}

\begin{theorem}[homogenization]\label{32-th:homog-Li}
Let $\{f_\e^\xi \}_{\e,\xi}$ satisfy \eqref{32-eq:hypo-perio}, {\rm (H1)}, {\rm(H2')}
and {\rm(H3')}. Then $F_\e$ $\Gamma$-converge  with respect to the $L^\infty$-weak$^*$ topology to the functional $F_{\rm hom}:L^{\infty}(\Omega;\Mnn)\to[0,+\infty]$ defined as
\begin{equation}\label{Fhom}
F_{\rm hom}(Q)=
\begin{cases}\displaystyle
\int_{\Omega} f_{\rm hom}(Q(x))dx &\text{if } Q\in L^{\infty}(\Omega;K)\\
+\infty&\text{otherwise,}
\end{cases}
\end{equation}
where $f_{\rm hom}$ is given by the {\em homogenization formula}
\begin{equation}\label{32-eq:homog-form_li}
f_{\rm hom}(Q)=\lim_{\rho\to 0}\lim_{h\to + \infty} \frac{1}{h^N} \inf
\Biggl\{ \sum_{\xi \in \zN} \sum_{\beta \in R^\xi_1(hW_N)}
f^\xi(\beta,Q(\beta),Q(\beta+\xi)),\langle Q \rangle^{d,1}_{hW_N}\in
{\overline B(\bar Q,\rho)} \Biggr\}.
\end{equation}
\end{theorem}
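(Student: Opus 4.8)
The plan is to deduce the statement from the abstract periodic homogenization theorem for discrete systems, \cite[Theorem 5.3]{ACG}, by exactly the device already used in the proof of Theorem \ref{32-th:disc-cont-Li-mean}. First I would fix the usual isomorphism identifying $\Mnn$ with $\R^{N(N+1)/2}$ (sending a symmetric matrix to its $N(N+1)/2$ independent entries, weighted so as to preserve the Frobenius norm). Under this identification each $f^\xi$ becomes a function on $\R^{N(N+1)/2}\times\R^{N(N+1)/2}$, the admissible set $S^{N-1}_\otimes$ becomes a compact subset of $\R^{N(N+1)/2}$, and, by Proposition \ref{convex envelope}, its convex hull is $K$. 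The periodicity assumption \eqref{32-eq:hypo-perio} together with hypotheses {\rm (H1)}, {\rm (H2')}, {\rm (H3')} are precisely the symmetry, boundedness and decay assumptions required by that general theorem, so its conclusion transfers: $F_\e$ $\Gamma$-converges, for the whole family and with respect to $L^\infty$-weak$^*$ convergence, to an integral functional finite exactly on $L^\infty(\Omega;K)$, with an $x$-independent density given by a cell formula. The fact that the limit density is now independent of $x$ (rather than merely Carath\'eodory, as in Theorem \ref{32-th:disc-cont-Li-mean}) and that no subsequence is needed is exactly what periodicity buys.

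The second, and main, point is to verify that the cell formula produced by the abstract result coincides with \eqref{32-eq:homog-form_li}. The feature distinguishing this spin-type setting from gradient homogenization is that the relevant macroscopic variable is the local average of $Q$, and that $L^\infty$-weak$^*$ convergence is precisely convergence of such averages; consequently the boundary datum in the cell problem must be imposed as an averaging constraint rather than as an affine or Dirichlet condition. I would therefore check that the natural cell problem on the cube $hW_N$ — minimizing $\sum_{\xi}\sum_{\beta\in R_1^\xi(hW_N)} f^\xi(\beta,Q(\beta),Q(\beta+\xi))$ over discrete fields whose discrete average $\langle Q\rangle^{d,1}_{hW_N}$ lies in the ball appearing in the formula — yields, after normalization by $h^N$ and the iterated limit $\lim_{\rho\to0}\lim_{h\to+\infty}$, the homogenized density. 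The two limits play complementary monotone roles: for each fixed $\rho$ the inner limit in $h$ produces a subadditive, Fekete-type quantity, while the outer limit in $\rho$ relaxes the approximate average constraint to an exact one; together they give existence of the limit and independence of the chosen point in the ball. Since the constraint forces the averages into $K$, the effective domain is exactly $L^\infty(\Omega;K)$.

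I expect the genuine work — the main obstacle — to be this reconciliation of boundary conditions, i.e. showing that relaxing the averaging constraint by letting $\rho\to0$ after $h\to+\infty$ is equivalent to the cell formula delivered by the abstract theorem, and that this order of limits is the correct one. Once the formula \eqref{32-eq:homog-form_li} is in place, the two $\Gamma$-convergence inequalities are standard. For the $\liminf$ inequality one localizes on small cubes, uses the lower semicontinuity of the average together with the cell formula on each cube, and assembles the estimate by a De Giorgi--Letta measure-property argument. For the $\limsup$ inequality one first reduces to target fields $Q$ that are piecewise constant with finitely many values in $K$, and on each region constructs a recovery sequence by periodically repeating near-optimal period-$h$ cell configurations, the averaging constraint ensuring $L^\infty$-weak$^*$ convergence to $Q$; continuity of $f_{\rm hom}$ on $K$ and the gluing of these sequences across interfaces are then routine.
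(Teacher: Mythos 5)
Your proposal is correct and follows essentially the same route as the paper: the paper's entire proof consists of invoking \cite[Theorem 5.3]{ACG}, implicitly relying on the identification of $\Mnn$ with $\R^{N(N+1)/2}$ and on Proposition \ref{convex envelope} already set up for Theorem \ref{32-th:disc-cont-Li-mean}. Your additional discussion of the averaging-constrained cell problem and of the two $\Gamma$-convergence inequalities elaborates on what the cited abstract theorem delivers, but does not depart from the paper's argument.
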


\begin{proof}The proof follows by applying \cite[Theorem 5.3]{ACG}.\end{proof}


\begin{remark}
The heuristics of the model we propose is the following. To each point $\e i$ belonging to the microscopic lattice $\e\Z_\e$, we associate an orientation $u^i$ and define an energy accounting for (long range) interactions among orientations. This results in a $\Gamma$-limit energy obtained relaxing the microscopic energy over all the possible mesoscales  $\delta_\e$, with $\e<<\delta_\e<<1$. 

In other models (see \cite{DeGPro}) a different point of view is taken. There, to each point $\e i\in \e\Z_\e(\Om)$ one associate a probability measure $\mu_i$ on $S^{N-1}$ accounting for an heuristic averaging of the microscopic orientations on some fixed mesoscale. Assuming symmetry properties of the distribution of these orientations one has that $\mu_i$ has vanishing barycenter. As a result a meaningful energy to consider depends on the second moment $Q_\mu$ (Q-tensor) associated to $\mu$. It may be seen that $Q_\mu\in K$ and that every $Q\in K$ is the second moment of some probability measure with vanishing barycenter. On the other hand there is no canonical way to associate with continuity a measure to a $Q$ tensor and the natural compactness of such energies only concerns the $Q$-tensors. Indeed even Sobolev compactness of $Q_{\mu_\e}$ is still weaker than any weak convergence of $\mu_\e$. As a result it seems unnatural to perform the $\Gamma$-limit with respect to a weak convergence of 
measures.  
\end{remark}
\subsection{Nearest-neighbor interactions in two dimensions}\label{2d}
In this section we consider a pairwise energy among nearest-neighboring points on a planar lattice whose configuration is parameterized by a function $u$ belonging to $S^1$. In this case the energy takes the form 
\begin{equation}\label{energy}
E_\e(u)=\sum_{|i-j|=1}\e^2f(u_i,u_j).
\end{equation}
The symmetry hypotheses can be rewritten as
\begin{equation}
f(u,v)=f(u,-v)=f(-u,v),
\end{equation}
and hypothesis (H3) reduces to assume that $f:\Z_\e(\Om)\to\R$ is a bounded function. 
Via the usual identification $Q(\e i)=Q(u_i)$ we may associate to $E_\e(u)$ the functional $F_\e(Q)$ given by:
\begin{eqnarray}\label{energy0thorder}
F_\e(Q)=
\begin{cases}\displaystyle
\sum_{|i-j|=1}\e^2f(u_i,\ u_j)& \text{if $ Q\in C_\e(\Om;K)$}\\
+\infty&\text{otherwise}.
\end{cases}
\end{eqnarray}
In this special case it is particularly easy to recover an explicit formula for the energy density $f_{\rm hom}$ in Theorem \ref{32-th:homog-Li}. Some useful identities are pointed out in the next proposition.
\begin{proposition}\label{K_prop}It holds that:
\begin{itemize}
\item[(a)] Let $u$ and $v \in S^{1}$. Denote by $I$ the identity $2\times 2$ matrix. Then
\begin{equation}\label{identity}
\Big|\frac12(u\otimes u+v \otimes v)-\frac12 I\Big|=\frac{\sqrt2}{2} | u \cdot v | \,.
\end{equation}
\item[(b)] Let $K$ be defined as in \eqref{convex} with $N=2$. Then we have
\begin{equation}\label{KN2}
K=\{Q\in \Mdue: |Q|\leq 1,\ {\rm tr}\,Q=1\}=\{Q \in \Mdue: \quad |Q-\tfrac12 I|\le\tfrac{\sqrt2}{2}\,,{\rm tr}\,Q=1 \}\,.
\end{equation}
\end{itemize}
\end{proposition}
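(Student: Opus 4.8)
The plan is to treat the two assertions separately, reducing both to elementary computations with the Frobenius inner product $:$ and the spectral decomposition of symmetric matrices. For (a) I would first write $\frac12(u\otimes u + v\otimes v) - \frac12 I = \frac12(u\otimes u + v\otimes v - I)$, so that $|\frac12(u\otimes u+v\otimes v)-\frac12 I|^2 = \frac14\,|u\otimes u + v\otimes v - I|^2$, and then expand the right-hand side by bilinearity of the inner product $:$. The individual terms are evaluated through \eqref{prodotti} and \eqref{modul}: one has $|u\otimes u|^2 = |v\otimes v|^2 = 1$ and $(u\otimes u):(v\otimes v) = (u\cdot v)^2$, while $|I|^2 = 2$, $(u\otimes u):I = \tr(u\otimes u) = |u|^2 = 1$, and similarly $(v\otimes v):I = 1$. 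Collecting these yields $|u\otimes u + v\otimes v - I|^2 = 2(u\cdot v)^2$, and the claim follows upon taking square roots. (Alternatively, choosing a unit vector $u^\perp\perp u$ one has $\frac12 I = \frac12(u\otimes u + u^\perp\otimes u^\perp)$, which reduces the statement to the identity $|v\otimes v - u^\perp\otimes u^\perp|^2 = 2(1-(v\cdot u^\perp)^2)$ together with $(v\cdot u^\perp)^2 = 1-(u\cdot v)^2$.)

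For the first equality in (b), I would fix $Q\in\Mdue$ with $\tr Q = 1$ and diagonalize it, denoting its eigenvalues by $\lambda_1,\lambda_2$, so that $\lambda_1+\lambda_2 = 1$ and $|Q|^2 = \lambda_1^2+\lambda_2^2 = 1 - 2\lambda_1\lambda_2$. The equality then amounts to the chain of equivalences $Q\ge 0 \iff \lambda_1\lambda_2\ge 0 \iff |Q|\le 1$. The second equivalence is immediate from $|Q|^2 = 1-2\lambda_1\lambda_2$. For the first, the decisive point is that, since $\lambda_1+\lambda_2 = 1 > 0$, the two eigenvalues cannot both be negative; hence $\lambda_1\lambda_2\ge 0$ forces $\lambda_1,\lambda_2\ge 0$, while the converse is trivial. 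This use of the trace constraint to exclude two negative eigenvalues is the only genuinely delicate step, and I would flag it explicitly, since the characterization $Q\ge 0 \Leftrightarrow |Q|\le 1$ is false without it.

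The second equality in (b) follows from a one-line computation under the same constraint: expanding $|Q-\frac12 I|^2 = |Q|^2 - (Q:I) + \frac14|I|^2 = |Q|^2 - \tr Q + \frac12 = |Q|^2 - \frac12$, one sees that $|Q-\frac12 I|\le\frac{\sqrt2}{2}$ is equivalent to $|Q|^2\le 1$, i.e. to $|Q|\le 1$. Since the argument proceeds entirely through equivalences, no separate verification of set inclusions is required, and the proposition is complete. The whole proof is elementary; apart from the bookkeeping in (a), the only step demanding care is the appeal to $\tr Q = 1$ in the positivity characterization of (b).
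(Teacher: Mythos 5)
Your proof is correct and follows essentially the same route as the paper: both arguments rest on the identity $|Q-\tfrac12 I|^2=|Q|^2-\tfrac12$ for trace-one $Q$ (which you rederive inline) together with expansion of Frobenius products via \eqref{prodotti}--\eqref{modul} for (a), and diagonalization plus the trace constraint for (b). Your explicit flagging of why $\operatorname{tr}Q=1>0$ is needed to pass from $\lambda_1\lambda_2\ge 0$ to $\lambda_1,\lambda_2\ge 0$ is a point the paper leaves implicit, but the substance is identical.
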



\begin{proof}
Let $Q\in\Mdue$ be such that ${\rm tr}\,Q=1$, then 
\begin{equation}\label{moduloQ}
\Bigl|Q-\frac{1}{2}I\Bigr|^2=|Q|^2-\frac{1}{2}.
\end{equation}
In the particular case where $Q=\frac{u\otimes u+v\otimes v}{2}$ with $u,\ v\in S^1$, using \eqref{prodotti} and \eqref{modul} we get
\begin{eqnarray*}
\left|\frac{u\otimes u+v\otimes v}{2}-\frac{1}{2}I\right|^2&=&\left|\frac{u\otimes u+v\otimes v}{2}\right|^2 -\frac{1}{2}=\frac{1}{4}(2+2(u\cdot v)^2)-\frac{1}{2}=\frac12(u\cdot v)^2
\end{eqnarray*}
which implies $(a)$.
Let $Q\in\Mdue$ be such that ${\tr \,Q}=1$. If $\lambda$ and $1-\lambda$ are the eigenvalues of $Q$, 
then the condition $1\ge|Q|^2=\lambda^2+(1-\lambda)^2$ is equivalent to $\lambda \in [0,1]$.
This implies the first equality in \eqref{KN2}. From that and \eqref{moduloQ} also the second equality follows.  \end{proof}

\begin{remark} 
Note that, following the same argument of the proposition above, we may also show that
$$
K=\{Q\in \Mdue:  | Q- s I |^2\le s^2+(1-s)^2, \ {\rm tr }\,Q=1\}.
$$
The case $s=\frac12$ highlighted in the proposition will be more useful since in that case  $Q-\frac12 I$
are traceless matrices.
\end{remark}

We now show that, for every $Q \in K$, there exist $u$ and $v \in S^1$ such that
\begin{equation*}
Q= \tfrac12 (u\otimes u+ v \otimes v)\,.
\end{equation*}
This decomposition will turn out to be unique (up to the order) except in the case when $Q=\frac12 I$. Indeed any pair  of orthonormal vectors $u,\ u^\perp$ satisfies
\begin{equation*}
\frac{u\otimes u+u^\perp\otimes u^\perp}{2}=\frac12 I.
\end{equation*}

\begin{proposition}\label{formula_magica}
Let $Q \in K$. Then there exist $u$ and $v \in S^1$ such that
\begin{equation}\label{Kap2}
Q= \tfrac12 (u\otimes u+ v \otimes v)\,.
\end{equation}
Furthermore, if $Q \neq \tfrac 12 I$, the matrices $u\otimes u$ and $v \otimes v$ are uniquely determined up to exchanging one with the other.
\end{proposition}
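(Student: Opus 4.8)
The plan is to produce the decomposition constructively from the spectral representation of $Q$, and then to analyze uniqueness by a dimension count on the two-dimensional symmetric-matrix level. By Proposition \ref{convex envelope} (with $N=2$), any $Q \in K$ is symmetric, positive semidefinite, with $\tr\,Q = 1$. Its ordered eigenvalues are $\lambda$ and $1-\lambda$ with $\lambda \in [0,\tfrac12]$, with corresponding orthonormal eigenvectors $w_1, w_2$. I want to write $Q = \tfrac12(u \otimes u + v \otimes v)$ for suitable $u, v \in S^1$.

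First I would look for $u, v$ sharing the same eigenbasis structure. Writing $u = \co\theta\, w_1 + \si\theta\, w_2$ and $v = \co\phi\, w_1 + \si\phi\, w_2$, a direct computation gives that $\tfrac12(u \otimes u + v \otimes v)$ has, in the basis $\{w_1, w_2\}$, diagonal entries $\tfrac12(\co^2\theta + \co^2\phi)$ and $\tfrac12(\si^2\theta + \si^2\phi)$ and off-diagonal entry $\tfrac12(\co\theta\,\si\theta + \co\phi\,\si\phi)$. To match $Q = \diag(\lambda, 1-\lambda)$ I would kill the off-diagonal term by choosing $\phi = -\theta$ (so the two sines cancel), which forces $u$ and $v$ to be reflections of each other across the $w_1$-axis; then the diagonal conditions reduce to the single equation $\co^2\theta = \lambda$. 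Since $\lambda \in [0,1]$ this is solvable, giving an explicit $\theta$ and hence the desired $u, v$. This establishes existence.

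For uniqueness when $Q \neq \tfrac12 I$, equivalently $\lambda \neq \tfrac12$, I would argue as follows. Suppose $Q = \tfrac12(u \otimes u + v \otimes v)$ for some $u, v \in S^1$. Using identity \eqref{identity} from Proposition \ref{K_prop}(a), $|Q - \tfrac12 I| = \tfrac{\sqrt2}{2}|u \cdot v|$, and since $Q \neq \tfrac12 I$ this quantity is strictly positive, so $u \cdot v \neq 0$, i.e.\ $u$ and $v$ are not orthogonal. The key step is then to pin down $u \otimes u$ and $v \otimes v$ directly. I would diagonalize $Q$: since $u, v$ span the plane (being non-orthogonal hence independent), the two rank-one matrices $u \otimes u$ and $v \otimes v$ are determined by noting that the eigenvector $w_1$ of $Q$ for the smaller eigenvalue $\lambda$ must bisect the pair $\{u, v\}$ (the configuration is symmetric under reflection across the line $\span\{w_1\}$), while the larger eigenvalue $1-\lambda$ forces the eigenvector $w_2$ to bisect $\{u, -v\}$; conversely, given the eigendata, the angle $\theta$ with $\co^2\theta = \lambda$ is determined up to sign, and the two sign choices merely exchange $u \otimes u$ with $v \otimes v$. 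Thus the unordered pair $\{u \otimes u, v \otimes v\}$ is uniquely determined.

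I expect the main obstacle to be the uniqueness argument rather than existence. Existence is a short trigonometric computation once the spectral decomposition is in hand. For uniqueness the delicate point is to argue cleanly that no genuinely different pair of unit vectors can produce the same $Q$: the temptation is to solve the matrix equation component-wise, but the cleaner route is to exploit that $u \otimes u + v \otimes v = 2Q$ determines both the sum of the projections and (via $\tr(u\otimes u \cdot v \otimes v) = (u\cdot v)^2$, computable from $|Q|^2$ by \eqref{moduloQ}) the quantity $(u \cdot v)^2$; these two constraints, together with positivity and the rank-one structure, leave exactly the one remaining freedom of swapping the two matrices. Making this ``two constraints determine the unordered pair'' step rigorous — and correctly isolating the exceptional case $\lambda = \tfrac12$ where the bisecting directions degenerate and any orthonormal pair works — is where the real care is needed.
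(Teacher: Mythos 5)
Your existence argument is correct and is essentially the paper's own construction written trigonometrically: with $\cos\theta=\sqrt{\lambda}$ and $\sin\theta=\sqrt{1-\lambda}$, your $u,v$ are exactly the paper's $\sqrt{\lambda}\,n_1\pm\sqrt{1-\lambda}\,n_2$. The uniqueness part, however, has a genuine gap. The pivotal claim --- that for an \emph{arbitrary} decomposition $Q=\tfrac12(u\otimes u+v\otimes v)$ the eigenvector of $Q$ ``must bisect the pair $\{u,v\}$'' --- is asserted rather than proven: the parenthetical justification (``the configuration is symmetric under reflection across the line through $w_1$'') presupposes exactly the alignment between the eigenbasis of $Q$ and the pair $\{u,v\}$ that you are trying to establish, so as written it is circular. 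The alternative route you sketch at the end has the same problem: that the constraints ``leave exactly the one remaining freedom of swapping the two matrices'' is precisely the statement to be proven, and you acknowledge as much. There is also a small but real logical slip: ``non-orthogonal hence independent'' is false ($u=v$ is non-orthogonal and dependent), so the rank-one case $Q=u\otimes u$ falls outside your argument and would need separate treatment.

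The missing step is supplied by a one-line computation, which is what the paper does. Using \eqref{azione}, any decomposition satisfies $Q(u+v)=\tfrac12(1+u\cdot v)(u+v)$ and $Q(u-v)=\tfrac12(1-u\cdot v)(u-v)$. By \eqref{identity}, $Q\neq\tfrac12 I$ forces $u\cdot v\neq0$; normalizing the sign of $v$ so that $u\cdot v>0$ (which does not change $v\otimes v$), the two eigenvalues $\tfrac12(1\pm u\cdot v)$ are distinct, so $Q$ has two one-dimensional eigenspaces and $u+v$, $u-v$ are pinned down up to sign, with norms $\sqrt{2(1\pm u\cdot v)}$ determined by $|Q-\tfrac12 I|$ via \eqref{identity}. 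The residual sign and ordering ambiguities are exactly those that leave the unordered pair $\{u\otimes u,\,v\otimes v\}$ invariant, which is the uniqueness statement; this computation also disposes of the degenerate case $u=\pm v$ painlessly, since then one of $u\pm v$ vanishes. Your idea of using $|Q|^2$ to recover $(u\cdot v)^2$ via \eqref{moduloQ} is sound and equivalent to invoking \eqref{identity}, but by itself it does not complete the argument without the eigenvector identification above.
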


\begin{proof}
Let $Q\in K$. Then it exists $\lambda \in [0,1]$ and an orthonormal basis $\{n_1, n_2\}$ of $\R^2$ such that
$$
Q=\lambda n_1\otimes n_1 +(1-\lambda) n_2\otimes n_2\,.
$$
If we set
\begin{equation*}
u=\sqrt{\lambda}\, n_1+\sqrt{1-\lambda}\,n_2\,,\quad v=\sqrt{\lambda}\,n_1-\sqrt{1-\lambda}\,n_2\,,
\end{equation*}
both $u$ and $v\in S^1$, and a direct computation shows that \eqref{Kap2} is satisfied.

Now, let us suppose that $Q=\frac12 (u\otimes u+ v \otimes v)=\frac12 (z\otimes z+ w \otimes w)$, with $u$, $v$, $z$, $w\in S^1$, and $Q\neq \frac 12 I$. First of all, by \eqref{identity} we get
$$
|u\cdot v|=|z\cdot w|\neq 0\,.
$$
Up to changing $w$ with $-w$, which does not affect the matrix $w\otimes w$, we can indeed suppose
\begin{equation}\label{poi}
(u\cdot v)=(z\cdot w)\neq 0\,.
\end{equation}
Set $\lambda= \frac 12 (1+ (u\cdot v))$. Using \eqref{azione}, a direct computation and \eqref{poi} then give
$$
Q(u+v)=\lambda (u+v)\,,\quad Q(z+w)=\lambda (z+w)
$$
and
$$
Q(u-v)=(1-\lambda) (u-v)\,,\quad Q(z-w)=(1-\lambda) (z-w)
$$
Since, by \eqref{poi}, $\lambda \neq \frac12$, the matrix $Q$ has two distinct one-dimensional eigenspaces. Hence the vector $u+v$ must then be parallel to $z+w$ and $u-v$ parallel to $z-w$. Furthermore, again using \eqref{poi}, $|u+v|=|z+w|$ and $|u-v|=|z-w|$. Up to changing both $u$ and $v$ with their antipodal vectors $-u$ and $-v$ (which does not affect the matrices $u\otimes u$ and $v\otimes v$) we can indeed suppose 
\begin{equation}\label{ugua1}
u+v=z+w
\end{equation}
while leaving \eqref{poi} unchanged. 
Then, if necessary exchanging $z$ and $w$ we can additionally assume 
\begin{equation}\label{ugua2}
u-v=z-w\,;
\end{equation}
again, this would not affect the validity of \eqref{poi} and \eqref{ugua1}. Then, \eqref{ugua1} and \eqref{ugua2} are simultaneously satisfied if and only if $u=z$ and $v=w$, as required.
\end{proof}

In order to compute the $\Gamma$-limit of $E_\e(u)$ given by \eqref{energy} we will use a dual-lattice approach. To that end we need to fix some notation about lattices and the corresponding interpolations. 
Given 
$
u\colon \e\Z^2 \cap \Om \to S^1
$
we set as usual $Q_i=Q(u_i)=u(\e i)\otimes u(\e i)$ for every $i\in \Z_\e(\Om)$ and we identify $u$ with a piecewise-constant interpolation $Q_\e$ by defining the class
\begin{equation}\label{ceps1}
C_\e(\Om; K):=\{Q \colon \Om \to K: Q(x)=Q^i\,\hbox{ if } x \in \e\{i+W\},\,i \in \Z_\e(\Om)\} .
\end{equation}
We also associate to $u$ a piecewise-constant interpolation on the 'dual' lattice 
\begin{equation}\label{dual}
\Z_\e'(\Om):=\Bigl\{\frac{i+j}{2}: i, j \in \Z^2,\, |i-j|=1,\, \e i, \e j \in \Om\Bigr\}\,,
\end{equation}
by setting
\begin{equation}\label{rel}
Q^k=\frac12 (Q_i+ Q_j)
\end{equation}
for $k=\frac{i+j}{2}$. Correspondingly we define $Q'_\e$ in the class
\begin{eqnarray}\label{ceps}\nonumber
C'_\e(\Om; K):=\Bigl\{Q \colon \Om \to K: \{Q_i\}\hbox{ exist such that }
Q(x)=\frac12 (Q_i+ Q_j)\\
\,\hbox{ if }x \in \e\Bigl\{\frac{i+j}{2}+W'\Bigr\},i, j \in \Z^2,\, |i-j|=1\Bigr\}\,, 
\end{eqnarray}
where $W'$ is the reference cube of the dual configuration, obtained from the cube $\bigl[-\frac{\sqrt{2}}4, \frac{\sqrt{2}}4\bigr)^2$ by a rotation of $\frac \pi 4$.
The following lemma asserts that these two interpolations have the same limit points as $\e$ goes to $0$. 

\begin{lemma}\label{easy}
Let a family of functions $u_\e\colon \e\Z_\e(\Om) \to S^1$ be given and define accordingly the piecewise-constant interpolations $Q_\e \in C_\e(\Om;K)$ and $Q'_\e \in C'_\e(\Om;K)$, respectively. Then
\begin{equation*}
Q_\e-Q'_\e \wtos 0
\end{equation*}
weakly$^*$ in $L^\infty(\Om;K)$ as $\e\to 0$.
\end{lemma}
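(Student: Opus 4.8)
The plan is to use the equiboundedness of the two families to reduce the weak$^*$ statement to testing against a dense class of functions. Since $|Q|=1$ on $S^{N-1}_\otimes$ and $K$ is its convex hull, we have $|Q|\le 1$ on $K$, so both $\|Q_\e\|_{L^\infty}\le 1$ and $\|Q'_\e\|_{L^\infty}\le 1$; hence $Q_\e-Q'_\e$ is equibounded in $L^\infty(\Om;\Mdue)$. Because $C^\infty_c(\Om;\Mdue)$ is dense in $L^1(\Om;\Mdue)$, it suffices to show that $\int_\Om (Q_\e-Q'_\e):\phi\,dx\to 0$ for every fixed $\phi\in C^\infty_c(\Om;\Mdue)$. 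For such a $\phi$ and for $\e$ small, all primal cells $\e\{i+W\}$ and all dual cells $\e\{k+W'\}$ that meet $\supp\phi$ are interior to $\Om$, so no boundary corrections enter the estimates.

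Next I would expand both integrals as Riemann sums over the respective cells and strip off the variation of $\phi$. Writing $\omega_\phi$ for the modulus of continuity of $\phi$, the primal cell $\e\{i+W\}$ has area $\e^2$ and is centered at $\e i$, so $\int_{\e\{i+W\}}\phi\,dx=\e^2\phi(\e i)+O(\e^2\omega_\phi(\e))$; summing against $Q_i$ and using $|Q_i|\le1$ gives $\int_\Om Q_\e:\phi\,dx=\e^2\sum_i Q_i:\phi(\e i)+o(1)$, since there are $O(\e^{-2})$ relevant cells. Each dual cell $\e\{k+W'\}$ has area $\e^2|W'|=\tfrac{\e^2}2$ and is centered at $\e k$, so the same argument yields $\int_\Om Q'_\e:\phi\,dx=\tfrac{\e^2}2\sum_k Q^k:\phi(\e k)+o(1)$.

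The heart of the matter is to identify the two leading sums. Using $Q^k=\tfrac12(Q_i+Q_j)$ for $k=\tfrac{i+j}2$, the dual leading sum becomes $\tfrac{\e^2}4\sum_{(i,j)}(Q_i+Q_j):\phi(\e k)$, the sum running over nearest-neighbor edges $(i,j)$. Since $|\e k-\e i|=|\e k-\e j|=\tfrac\e2$, I replace $\phi(\e k)$ by $\phi(\e i)$ in the $Q_i$-term and by $\phi(\e j)$ in the $Q_j$-term, at a total cost $O(\omega_\phi(\e))\to 0$, leaving $\tfrac{\e^2}4\sum_{(i,j)}\bigl[Q_i:\phi(\e i)+Q_j:\phi(\e j)\bigr]$. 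Regrouping this edge sum by vertices is decisive: every interior vertex $i$ of the square lattice is incident to exactly four edges, so the term $Q_i:\phi(\e i)$ is counted four times, while vertices near $\partial\Om$ (of lower coordination) lie outside $\supp\phi$ and drop out. Thus the sum equals $\tfrac{\e^2}4\cdot 4\sum_i Q_i:\phi(\e i)=\e^2\sum_i Q_i:\phi(\e i)$, exactly the primal leading sum, and subtracting gives $\int_\Om(Q_\e-Q'_\e):\phi\,dx=o(1)$. I expect the only genuinely delicate point to be this edge-to-vertex bookkeeping, namely checking that the coordination number $4$ of the interior lattice precisely cancels the weight $\tfrac14$ produced jointly by the halved dual-cell area $\tfrac{\e^2}2$ and the averaging $Q^k=\tfrac12(Q_i+Q_j)$; everything else is a routine uniform-continuity estimate, and the compact support of $\phi$ removes all boundary contributions.
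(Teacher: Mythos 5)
Your proof is correct and follows essentially the same route as the paper's: the key identity in both is that the coordination number $4$ of the interior lattice compensates the dual-cell area $\tfrac12$ and the averaging weight in $Q^k=\tfrac12(Q_i+Q_j)$, so the primal and dual Riemann sums agree up to negligible errors. The only cosmetic difference is that the paper tests against constant matrices on compactly contained subsets $\Om'\subset\subset\Om$ (redistributing each primal cell integral over its four neighboring dual cells), whereas you test against $C^\infty_c$ functions and absorb their variation into a modulus-of-continuity error; both then conclude by equiboundedness and density.
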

\begin{proof}
Let $\Om' \subset \subset \Om$, and $i \in \Z_\e(\Om')$ such that ${\rm dist}(\e i,\partial \Om')\ge \e$. This implies that $\e\{i+W\}\subset \Om'$, and that for every of $k$ satisfying $|k-i|=\frac12$, one has $k \in \Z_\e'(\Om)$ and $\e\{k+W'\}\subset \Om'$. 
Now, in dimension $N=2$ there are $4$ of such $k$'s. Since $W$ has unit area and $W'$ has area $\frac12$, one therefore gets
$$
\int_{\e\{i+W\}}A:Q_i \,dx=(A:Q_i)=A:\frac{Q_i}{2}\sum_{\substack{k \in \Z_\e'(\Om)\\ |k-i|=\frac12}}\e^2|\{k+W'\}|=\sum_{\substack{k \in \Z_\e'(\Om)\\ |k-i|=\frac12}}\int_{\e\{k+W'\}}A:\frac{Q_i}{2} \,dx
$$
for every $A \in \Mdue$.
Summing over all indices $i$, it follows that
$$
\int_{\Om'}A:(Q_\e(x)-Q'_\e(x))\,dx \to 0
$$
for every $A \in \Mdue$ and $\Om' \subset \subset \Om$, which implies the conclusion by the uniform boundedness of $Q_\e$ and $Q'_\e$.
\end{proof}

We now set
\begin{equation}\label{fhat}
\widehat f(Q):=
\begin{cases}
f(u,v)&\text{if }\frac{u\otimes u+v\otimes v}{2}=Q\neq\frac{1}{2}I\\
\min\{f(u,v):\ u,\ v\in S^1,\ \frac{u\otimes u+v\otimes v}{2}=\frac12 I\} &\text{if }Q=\frac12 I
\end{cases}
\end{equation}
and notice that
\begin{equation}\label{trivialestimate}
E_\e(u)\geq 2 \sum_{k \in \Z_\e'(\Om)} \e^2 \widehat f(Q^k).
\end{equation}
We also define on $L^\infty(\Om; \Mdue)$ the energy $\widehat F_\e$ by setting
\begin{equation}\label{energy2}
\widehat F_\e(Q):=\begin{cases}
         \displaystyle
         2\sum_{k \in \Z_\e'(\Om)} \e^2 \widehat f(Q^k)\quad\hbox{if }Q'\in C'_\e(\Om; K)\\
         +\infty \quad\hbox{otherwise}
         \end{cases}
\end{equation}
and we observe that 
\begin{equation}\label{energy2+}
\widehat F_\e(Q'):=\begin{cases}
         \displaystyle
         4\int_{\Om} \widehat f(Q'(x))\,dx+ r_\e\quad\hbox{if }Q'\in C'_\e(\Om; K)\\
         +\infty \quad\hbox{otherwise,}
         \end{cases}
\end{equation}
where the reminder term $r_\e$ comes from the fact that a portion of the cubes $k+\e W'$ may not be completely contained in $\Om$, and a multiplier $2$ appears, since the area of the reference cube $W'$ is $\frac12$. 
By the boundedness of the integrand we have that $r_\e=o(1)$ uniformly in $Q'\in C'_\e(\Om; K)$. 
We then have the following result.

\begin{theorem}\label{theorem:nn_general}
Let $F_\e\colon L^\infty(\Om;\Mdue)\to \R \cup\{\infty\}$ be defined by \eqref{energy0thorder}. Then $F_\e$ $\Gamma$-converges with respect to the weak$^*$-topology of $L^\infty(\Om;\Mdue)$ to the functional $F\colon L^\infty(\Om;\Mdue)\to \R \cup\{\infty\}$ defined by
$$
F(Q):=\begin{cases}
         \displaystyle
         4\int_{\Om} \widehat f^{**}(Q(x))\,dx\quad\hbox{if }Q \in L^\infty(\Om; K)\\
         +\infty \quad\hbox{otherwise}.
         \end{cases}
$$
In particular $f_{\rm hom}=4\widehat f^{**}$.
\end{theorem}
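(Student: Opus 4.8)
The plan is to prove the two $\Gamma$-convergence inequalities separately: the lower bound via the dual-lattice reformulation combined with convexity, and the upper bound by an explicit microstructure construction exploiting the bipartiteness of $\Z^2$. Once both are established, the stated form of the $\Gamma$-limit follows, and the identity $f_{\rm hom}=4\widehat f^{**}$ is obtained by comparison with the homogenization density of Theorem~\ref{32-th:homog-Li} together with the uniqueness of the $\Gamma$-limit.

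For the $\Gamma$-$\liminf$ inequality I would start from the trivial estimate \eqref{trivialestimate}, i.e. $F_\e(u)\ge \widehat F_\e(Q'_\e)$, and from \eqref{energy2+}, which gives $\widehat F_\e(Q'_\e)=4\int_\Om \widehat f(Q'_\e)\,dx+r_\e$ with $r_\e=o(1)$. Since $\widehat f^{**}\le \widehat f$ pointwise on $K$, this yields $F_\e(u)\ge 4\int_\Om \widehat f^{**}(Q'_\e)\,dx+o(1)$. Given any $Q_\e\wtos Q$ in $L^\infty(\Om;\Mdue)$ with finite liminf energy (so that $Q_\e\in C_\e(\Om;K)$ for small $\e$), Lemma~\ref{easy} gives $Q'_\e-Q_\e\wtos 0$, hence $Q'_\e\wtos Q$. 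As $\widehat f$ is bounded on the compact set $K$, its convex envelope $\widehat f^{**}$ is a bounded convex function on $K$, so by the standard weak$^*$ lower semicontinuity of convex integral functionals the map $Q\mapsto 4\int_\Om \widehat f^{**}(Q(x))\,dx$ is sequentially weak$^*$ lower semicontinuous on $L^\infty(\Om;K)$. Passing to the liminf then gives $\liminf_\e F_\e(Q_\e)\ge 4\int_\Om \widehat f^{**}(Q)\,dx$.

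For the $\Gamma$-$\limsup$ inequality the key observation is that $\Z^2$ is bipartite. Given a constant target $Q_0\in K$, choose a decomposition $Q_0=\tfrac12(u_0\otimes u_0+v_0\otimes v_0)$ realizing $\widehat f(Q_0)=f(u_0,v_0)$ (at $Q_0=\tfrac12 I$ taking the orthonormal minimizing pair). Assigning $u_0$ to the even sublattice and $v_0$ to the odd one yields a configuration in which every nearest-neighbour bond carries energy $f(u_0,v_0)$ and every dual value equals $Q^k=Q_0$; counting four ordered bonds per node shows the energy density of this checkerboard field is $4\,\widehat f(Q_0)$, while its piecewise-constant primal interpolation weak$^*$ converges to the average $Q_0$. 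To reach $\widehat f^{**}$ rather than $\widehat f$, I would write $\widehat f^{**}(Q_0)=\sum_l\theta_l\,\widehat f(Q_l)$ with $\sum_l\theta_l Q_l=Q_0$ (Carath\'eodory), partition $\Om$ into cubes of mesoscale $\delta$, split each cube into subregions of relative volume $\theta_l$, and place the corresponding $Q_l$-checkerboard in each. For fixed $\delta$ the bonds straddling interfaces number $O(\e^{-1})$ per unit length and contribute $O(\e)\to 0$, so the energy converges to $4\int_\Om\widehat f^{**}(Q_0)\,dx$ while the primal field weak$^*$ converges to $Q_0$; a diagonal argument in $\delta$ settles the weak$^*$ convergence.

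It remains to remove the restriction to constant targets. Since the $\Gamma$-$\limsup$ functional is weak$^*$ lower semicontinuous, and piecewise-constant $K$-valued maps are weak$^*$ dense in $L^\infty(\Om;K)$ with $\int_\Om\widehat f^{**}(\cdot)\,dx$ continuous along such approximations (by boundedness and continuity of $\widehat f^{**}$ on $K$ and dominated convergence), the inequality for piecewise-constant $Q$—obtained by running the previous construction on each piece and discarding the negligible interfacial bonds—passes to arbitrary $Q\in L^\infty(\Om;K)$. The main obstacle is precisely this upper bound: one must produce genuine $S^1$-valued spin fields whose tensorised interpolations simultaneously carry the optimal energy \emph{and} converge to the prescribed $Q$, reconciling the bipartite structure of the lattice, the uniqueness-up-to-order decomposition of Proposition~\ref{formula_magica}, and the degeneracy at $Q=\tfrac12 I$; the dual-lattice identities \eqref{trivialestimate}--\eqref{energy2+} are what make the competition between these effects tractable.
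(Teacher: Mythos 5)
Your argument follows the paper's (very terse) proof in essentially the same way: the lower bound comes from \eqref{trivialestimate}, \eqref{energy2+} and Lemma \ref{easy} together with weak$^*$ lower semicontinuity of the convex integral functional, and the upper bound from the checkerboard realization of $\widehat f$ on the two sublattices of $\Z^2$ followed by lamination at a mesoscale to reach $\widehat f^{**}$. Two small repairs are needed in your write-up: since $f$ is only a bounded Borel function, the infimum in \eqref{convexification} need not be attained and $\widehat f^{**}$ need not be continuous up to $\partial K$ (a bounded convex lsc function on a disk can be discontinuous at boundary points), so you should work with $\eta$-almost-optimal convex combinations plus a diagonal argument, and in the final passage from piecewise-constant to general $Q\in L^\infty(\Om;K)$ replace the ``continuity and dominated convergence'' step by Jensen's inequality applied to the piecewise averages $Q_n$ of $Q$ on a fine grid, which directly yields $\int_\Om\widehat f^{**}(Q_n(x))\,dx\le\int_\Om\widehat f^{**}(Q(x))\,dx$ --- the inequality actually needed for the $\Gamma$-limsup.
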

\begin{proof}The proof is obtained by using \eqref{trivialestimate}, 
\eqref{fhat} and Lemma \ref{easy}.\end{proof}

\medskip

Being of particular interest in the applications, we now further simplify the formula above in the isotropic case, that is when
\begin{equation*}
f(Ru,Rv)=f(u,v),\quad \hbox{ if } u,v\in S^1\ \hbox{and } R\in SO(2).
\end{equation*}
The previous condition is equivalent to saying that $f$ is a function of the scalar product $u \cdot v$; taking also into account the head-to-tail symmetry condition \eqref{def:symmetry} this amounts to require that there exists a bounded Borel function
$
h\colon [0,1]\to \R
$
such that
\begin{equation}\label{sfer}
f(u,v)=h( |u\cdot v|)
\end{equation}
for every $u$ and $v \in S^{1}$. This choice of the energy density, together with the special symmetry of $K$ in the two dimensional case (highlighted in formula \eqref{KN2}) will lead to a radially symmetric $f_{\rm hom}$. 

We first observe that from \eqref{sfer} and \eqref{identity}, the function $\widehat f$ defined in \eqref{fhat} takes now the form
\begin{equation}\label{grt}
\widehat f(Q)= h(\sqrt2 |Q-\tfrac12 I|)
\end{equation}
for every $Q\in K$. For such a $\widehat f$ the function $\widehat f^{**}$ can be characterized by a result in convex analysis. To that end, we introduce a monotone envelope as follows

\begin{definition}\label{h+++}
Let $h:[0,1]\to\R$. We define the {\em nondecreasing convex lower-semicontinuous envelope} $h^{++}$ of $h$ as the largest nondecreasing convex lower-semicontinuous function below $h$ at every point in $[0,1]$. This is a good definition since all these properties are stable when we take the supremum of a family of functions.
\end{definition}

\begin{remark}\label{r: h++}
(i) The function $h^{++}$ satisfies the following property
\begin{equation}\label{infimum}
\min_{t \in [0,1]}h^{++}(t)=h^{++}(0)=\inf_{t \in [0,1]}h(t)\,.
\end{equation}
As a consequence, if $h$ has an interior minimum point $\ol t$, then
\begin{equation}\label{infimum2}
h^{++}(t)\equiv h(\ol t)
\end{equation}
for every $0\le t\le \ol t$;

(ii) let $\Md$ be the subspace of trace-free symmetric $2\times 2$ matrices, and let $\varphi:\Md\to\R\cup\{+\infty\}$ and $\psi:[0,+\infty)\to \R\cup\{+\infty\}$ be proper Borel functions. If $\varphi(Q)=\psi(|Q|)$ for all $Q\in\Md$, then $\varphi^{**}(Q)$ coincides with the largest nondecreasing convex lower-semicontinuous function below $\psi$ at every point in $[0,+\infty)$ computed at $|Q|$ (\cite[Corollary 12.3.1 and Example below]{Roc}). Note that if $\psi(t)=+\infty$ for $t\ge 1$ then we have $\varphi^{**}(Q)=\psi^{++}(|Q|)$ if $|Q|\le 1$, with $\psi^{++}$ as in Definition \ref{h+++}.

\end{remark}

\begin{proposition}
Let $h:[0,1]\to \R$ be a bounded Borel function. 
Let $K$ be given by \eqref{convex} and define $\widehat f\colon \Mdue\to \R$ by
$$
\widehat f(Q):= \begin{cases}
        h(\sqrt2 |Q-\tfrac12 I|)\quad\hbox{if }Q \in K\\
        +\infty \quad\hbox{otherwise}\,.
       \end{cases}
$$
Then the lower semicontinuous and convex envelope $\widehat f^{**}$ of $\widehat f$ is given by
\begin{equation}\label{g++}
\widehat f^{**}(Q)= \begin{cases}
        h^{++}(\sqrt2 |Q-\tfrac12 I|)\quad\hbox{if }Q \in K\\
        +\infty \quad\hbox{otherwise}\,.
       \end{cases}
\end{equation}
\end{proposition}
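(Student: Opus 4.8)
The plan is to reduce the computation to a convex-envelope problem on the two-dimensional subspace $\Md$ of trace-free symmetric matrices, where the radial formula of Remark~\ref{r: h++}(ii) applies directly, and then to translate the answer back to $\Mdue$. The starting observation is that, by \eqref{KN2}, the effective domain $K$ of $\widehat f$ lies entirely in the affine hyperplane $H:=\{Q\in\Mdue:\ {\rm tr}\,Q=1\}$, and that $H$ is an isometric translate of $\Md$ via the correspondence $Q=\tfrac12 I+P$, $P\in\Md$, under which $|Q-\tfrac12 I|=|P|$. Through this identification $K$ becomes the closed ball $\{P\in\Md:\ |P|\le\tfrac{\sqrt2}2\}$ and $\widehat f$ becomes the radial function $\varphi(P):=\psi(|P|)$, where $\psi(t):=h(\sqrt2\,t)$ for $t\le\tfrac{\sqrt2}2$ and $\psi(t):=+\infty$ otherwise.

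First I would check that $\widehat f^{**}(Q)=+\infty$ for every $Q\notin K$. Since $K$ is compact and $h$ is bounded, $\widehat f\ge -M$ on $K$ for some $M>0$; given $Q\notin K$ one separates $Q$ from $K$ by a linear functional $\Lambda$ with $\Lambda\le c$ on $K$ and $\Lambda(Q)>c$, and the affine maps $R\mapsto -M+t(\Lambda(R)-c)$, $t\ge0$, are all minorants of $\widehat f$ whose values at $Q$ diverge to $+\infty$. This shows $\text{dom}\,\widehat f^{**}\subseteq K\subset H$, so it only remains to compute $\widehat f^{**}$ on $H$.

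Next I would argue that, on $H$, the biconjugate computed over $\Mdue$ coincides with the convex envelope of $\varphi$ computed intrinsically over $\Md$. Indeed, an affine function $\ell$ on $\Mdue$ satisfies $\ell\le\widehat f$ precisely when $\ell\le\widehat f$ on $K$, a condition that involves only $\ell|_H$ since $K\subset H$; conversely every affine function on $H$ dominated by $\widehat f|_K$ extends to such an $\ell$. Taking the supremum over admissible affine minorants and evaluating at points of $H$ therefore gives $\widehat f^{**}(\tfrac12 I+P)=\varphi^{**}(P)$ for all $P\in\Md$. Applying Remark~\ref{r: h++}(ii) to the radial function $\varphi$ then yields $\varphi^{**}(P)=\psi^{++}(|P|)$.

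Finally I would undo the scaling and the $+\infty$-extension. The map $t\mapsto\sqrt2\,t$ is an increasing linear bijection of $[0,+\infty)$ and hence preserves monotonicity, convexity and lower semicontinuity; writing $\psi(t)=\tilde h(\sqrt2\,t)$ with $\tilde h$ the extension of $h$ by $+\infty$ on $(1,+\infty)$, one obtains $\psi^{++}(t)=\tilde h^{++}(\sqrt2\,t)$, and $\tilde h^{++}$ agrees with the envelope $h^{++}$ of Definition~\ref{h+++} on $[0,1]$ (any nondecreasing convex lsc minorant on $[0,1]$ extends to one on $[0,+\infty)$ by setting it equal to $+\infty$ past $1$, and conversely restricts). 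Combining these identities, for $Q\in K$ one gets
\[
\widehat f^{**}(Q)=\varphi^{**}(Q-\tfrac12 I)=\psi^{++}(|Q-\tfrac12 I|)=h^{++}(\sqrt2\,|Q-\tfrac12 I|),
\]
which together with the first paragraph is exactly \eqref{g++}. The step requiring the most care is the reduction to $\Md$: it is precisely there that the hard constraint ${\rm tr}\,Q=1$ encoded in $\widehat f$ is used to match affine minorants on the slice $H$ with arbitrary affine functions on the trace-free subspace, whereas the rescaling and the $+\infty$-extension arguments are routine.
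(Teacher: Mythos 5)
Your proof is correct and follows essentially the same route as the paper: both reduce to the deviator $Q_D=Q-\tfrac12 I$ on the trace-free subspace $\Md$ and then invoke Remark \ref{r: h++}(ii) for the radial function. The only difference is that you establish $\widehat f^{**}(Q)=\widehat f_D^{**}(Q_D)$ via affine minorants (which also cleanly handles the case $Q\notin K$ and the lower semicontinuity of the envelope), whereas the paper uses the infimum over convex combinations; your treatment of the rescaling $t\mapsto\sqrt2\,t$ and of the extension of $h$ by $+\infty$ is likewise more explicit than, but equivalent to, what the paper leaves to Remark \ref{r: h++}(ii).
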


\begin{proof}
For a matrix $Q \in \Mdue$, consider the deviator $Q_D$ of $Q$, that is its projection onto the linear subspace $\Md$ of trace-free symmetric $2\times 2$ matrices, which is orthogonal to the identity. Using \eqref{KN2} it is not difficult to see that $Q\in K$ if and only if $Q=\frac 12 I + Q_D$ with $Q_D$ belonging to
$$
K_D:=\{ Q_D \in \Md: |Q_D|\le \tfrac{\sqrt2}{2}\}\,.
$$
Define now $\widehat f_D \colon \Md \to [0, +\infty]$ by
$$
\widehat f_D(Q_D):= \begin{cases}
        h(\sqrt2 |Q_D|)\quad\hbox{if }Q_D \in K_D\\
        +\infty \quad\hbox{otherwise}\,.
       \end{cases}
$$
Obviously, $\widehat f^{**}(Q)=+\infty$ when $Q\notin K$. When $Q \in K$, being $|Q-\frac 12 I|=|Q_D|$ and exploiting the well-known characterization
\begin{equation}\label{convexification}
\widehat f^{**}(Q)=\inf\Bigl\{\sum_{i=1}^m \lambda_i \widehat f(Q_i):\quad m\in \N,\,\lambda_i\ge 0\,,\sum_{i=1}^m \lambda_i=1,\,\sum_{i=1}^m \lambda_iQ_i=Q\Bigr\} 
\end{equation}
we easily get $\widehat f^{**}(Q)=\widehat f_D^{**}(Q_D)$, so that \eqref{g++} follows now by Remark \ref{r: h++}(ii).
\end{proof}

\begin{theorem}\label{theorem:nn}
Let $F_\e\colon L^\infty(\Om;\Mdue)\to \R \cup\{\infty\}$ be defined by \eqref{energy0thorder} with $f$ as in \eqref{sfer}. Then $F_\e$ $\Gamma$-converges with respect to the weak$^*$-topology of $L^\infty(\Om;\Mdue)$ to the functional $F\colon L^\infty(\Om;\Mdue)\to \R \cup\{\infty\}$ defined by
$$
F(Q):=\begin{cases}
         \displaystyle
         4\int_{\Om} h^{++}(\sqrt2 |Q(x)-\tfrac12 I|)\,dx\quad\hbox{if }Q \in L^\infty(\Om; K)\\
         +\infty \quad\hbox{otherwise}
         \end{cases}
$$
where $h^{++} \colon[0,1]\to \R$ is the nondecreasing convex lower semicontinuous envelope of $h$  as in Definition {\rm\ref{h+++}}.
\end{theorem}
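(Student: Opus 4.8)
The plan is to obtain this result as a direct specialization of Theorem~\ref{theorem:nn_general} to the isotropic densities of the form \eqref{sfer}, so that essentially all the analytic work reduces to the convex-analysis characterization already carried out in the Proposition preceding the statement.

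First, I would apply Theorem~\ref{theorem:nn_general}. Since an $f$ of the form \eqref{sfer} is a bounded, head-to-tail symmetric nearest-neighbor density, that theorem applies verbatim and gives that $F_\e$ $\Gamma$-converges, with respect to the weak$^*$ topology of $L^\infty(\Om;\Mdue)$, to $4\int_\Om \widehat f^{**}(Q(x))\,dx$ on $L^\infty(\Om;K)$ (and to $+\infty$ otherwise), with $\widehat f$ defined by \eqref{fhat}. It therefore only remains to identify $\widehat f^{**}$ in the present isotropic setting.

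Second, I would make the isotropy explicit at the level of $\widehat f$. For $Q\neq\frac12 I$ write $Q=\frac12(u\otimes u+v\otimes v)$; assumption \eqref{sfer} gives $f(u,v)=h(|u\cdot v|)$, while identity \eqref{identity} of Proposition~\ref{K_prop} yields $|u\cdot v|=\sqrt2\,|Q-\frac12 I|$, whence $\widehat f(Q)=h(\sqrt2\,|Q-\frac12 I|)$. For $Q=\frac12 I$ the admissible pairs in \eqref{fhat} are the orthonormal ones, for which $|u\cdot v|=0=\sqrt2\,|\frac12 I-\frac12 I|$, so the same expression holds. This is precisely \eqref{grt}.

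Finally, I would invoke the Proposition that computes the convex envelope of such a radially symmetric, constrained $\widehat f$, namely formula \eqref{g++}, to conclude $\widehat f^{**}(Q)=h^{++}(\sqrt2\,|Q-\frac12 I|)$ for $Q\in K$. Substituting this into the limit functional from the first step gives the asserted form of $F$. The genuine difficulty --- showing that convexification interacts correctly both with the radial structure in $|Q-\frac12 I|$ and with the constraint $Q\in K$ --- has already been dispatched in that Proposition, via the deviator decomposition $Q=\frac12 I+Q_D$ (so that $|Q-\frac12 I|=|Q_D|$ and $K$ corresponds to the ball $K_D$) together with Rockafellar's characterization of the convex envelope of a function of $|Q_D|$. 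Consequently the only step left here is this substitution, and I do not expect any further obstacle.
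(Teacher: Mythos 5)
Your proposal is correct and follows exactly the route of the paper: the authors likewise obtain Theorem \ref{theorem:nn} by combining Theorem \ref{theorem:nn_general} with the identification \eqref{grt} of $\widehat f$ via \eqref{identity} and the convexification formula \eqref{g++}. Your explicit check of the case $Q=\frac12 I$ in \eqref{fhat} is a small but welcome addition that the paper leaves implicit.
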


\begin{proof} The result follows Theorem \ref{theorem:nn_general} and \eqref{g++}.\end{proof}

\smallskip
We end up this section with another simple example where the locus of minima of $f_{\rm hom}$ can be explicitly computed.

\begin{example}\label{noradial}\rm We consider $f$ of the form $f(u,v)=\min\{\tilde f(u,v),\tilde f(v,u)\}$ with
$$
\tilde f(u,v):=\begin{cases}
         0 &\hbox{if } |u\cdot e_1|=l,\,|u\cdot v|=m\\
        1 &\hbox{ otherwise},\,
        \end{cases}
$$
where for simplicity the parameters $l$ and $m$ are both taken strictly contained in $(0,1)$ (the case where at least one of them attains the value $0$ or the value $1$ can be treated with minor modifications).
Defining $\widehat f$ as in \eqref{fhat}, one immediately has that $\widehat f$ only takes the two values $0$ and $1$: namely, setting
$$
G:=\Bigl\{Q \in K: Q=\frac{u\otimes u + v\otimes v}{2},\quad |u\cdot e_1|=l,\,|u\cdot v|=m\Bigr\}
$$
one has $\widehat f(Q)=0$ if $Q\in K$ and $\widehat f(Q)=1$ if $Q \in K \setminus G$. In our case, since $0<l<1$ and $0<m<1$, $G$ consists of exactly $4$ distinct matrices. Precisely, taking $\theta_l$ and $\theta_m \in (0, \frac \pi 2)$ such that $\cos (\theta_l)=l$ and $\cos (\theta_m)= m$
we set
$$
u_1=(\cos (\theta_l), \sin (\theta_l)), \quad u_2=(\cos (\pi-\theta_l), \sin (\pi-\theta_l))
$$
and
$$
\begin{array}{c}
 v^1_1= (\cos (\theta_l+\theta_m), \sin (\theta_l+\theta_m)),\quad v^1_2= (\cos (\theta_l-\theta_m),\sin (\theta_l-\theta_m))\\[4pt]
 v^2_1= (\cos (\pi-\theta_l+\theta_m), \sin (\pi-\theta_l+\theta_m)),\quad v^2_2= (\cos (\pi-\theta_l-\theta_m), \sin (\pi-\theta_l-\theta_m))\,.
\end{array}
$$
Then, $G$ consists of the $4$ matrices $Q_i$, with $i=1,\dots,4$, respectively given by
$$
\begin{array}{c}
Q_1=\frac{u_1\otimes u_1 + v^1_1\otimes v^1_1}{2}=\frac 12\begin{pmatrix}
                                                             \cos^2(\theta_l)+\cos^2(\theta_l+\theta_m)&\frac12[\sin(2\theta_l)+\sin(2(\theta_l+\theta_m)]\\
                                                             \frac12[\sin(2\theta_l)+\sin(2(\theta_l+\theta_m))]&\sin^2(\theta_l)+\sin^2(\theta_l+\theta_m)
                                                             \end{pmatrix},\\[10pt]
Q_2=\frac{u_1\otimes u_1 + v^1_2\otimes v^1_2}{2}=\frac 12\begin{pmatrix}
                                                             \cos^2(\theta_l)+\cos^2(\theta_l-\theta_m)&\frac12[\sin(2\theta_l)+\sin(2(\theta_l-\theta_m))]\\
                                                             \frac12[\sin(2\theta_l)+\sin(2(\theta_l-\theta_m))]&\sin^2(\theta_l)+\sin^2(\theta_l-\theta_m)
                                                             \end{pmatrix},\\[10pt]
Q_3=\frac{u_2\otimes u_1 + v^2_1\otimes v^2_1}{2}=\frac 12\begin{pmatrix}
                                                             \cos^2(\theta_l)+\cos^2(\theta_l-\theta_m)&-\frac12[\sin(2\theta_l)+\sin(2(\theta_l-\theta_m))]\\
                                                             -\frac12[\sin(2\theta_l)+\sin(2(\theta_l-\theta_m))]&\sin^2(\theta_l)+\sin^2(\theta_l-\theta_m)
                                                             \end{pmatrix},\\[10pt]
Q_4=\frac{u_2\otimes u_2 + v^2_2\otimes v^2_2}{2}=\frac 12\begin{pmatrix}
                                                             \cos^2(\theta_l)+\cos^2(\theta_l+\theta_m)&-\frac12[\sin(2\theta_l)+\sin(2(\theta_l+\theta_m))]\\
                                                             -\frac12[\sin(2\theta_l)+\sin(2(\theta_l+\theta_m))]&\sin^2(\theta_l)+\sin^2(\theta_l+\theta_m)
                                                             \end{pmatrix}.

\end{array}
$$
By construction, and using \eqref{identity}, $|Q_i-\frac 12 I|=\frac{\sqrt{2}}{2}m$. Furthermore, $Q_1-Q_4$ is parallel to $Q_2-Q_3$. Therefore, in the two-dimensional affine space of the matrices $Q \in \Mdue$ with ${\rm tr}\,Q=1$, the convex envelope ${\rm co}(G)$ can be represented as a trapezoid inscribed in the circle with center at $\frac12 I$ and radius $\frac{\sqrt{2}}{2}m$ (see Figure \ref{fig: co_K_}).
Finally, by Theorem \ref{theorem:nn_general}, we have $f_{\rm hom}=4\widehat f^{**}$, which implies in this case that $f_{\rm hom}\ge 0$ and $f_{\rm hom}(Q)=0$ if and only if $Q\in{\rm co}(G)$. 
\end{example}
\begin{figure}
\begin{center}
\includegraphics[scale=.75 ]{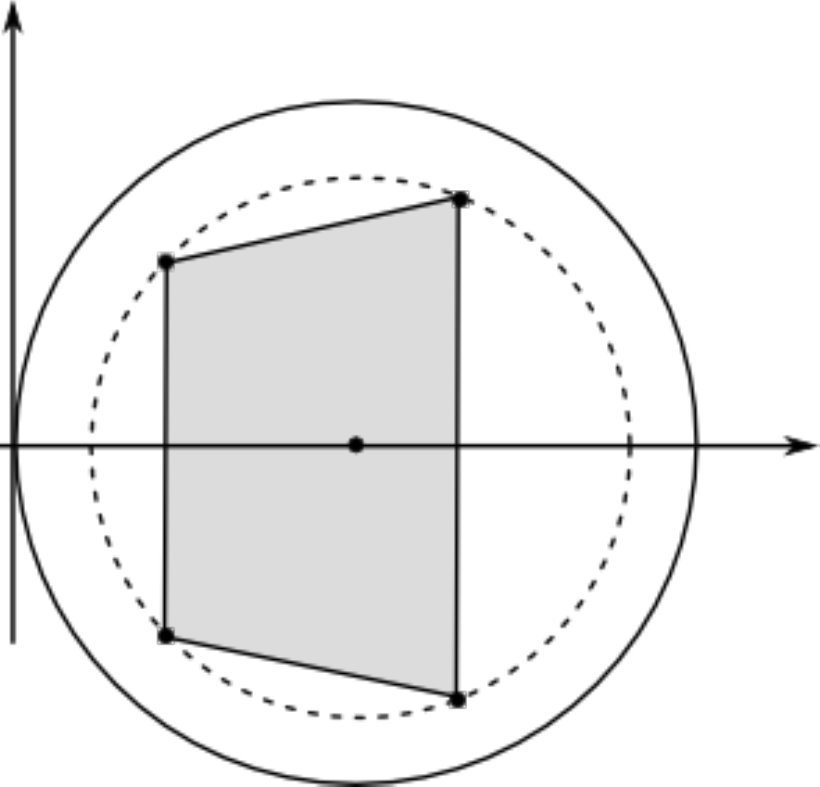}
\caption{The locus of minimizers co$(G)$ in Example \ref{noradial}}\label{fig: co_K_}
\end{center}
\begin{picture}(0,0)
\put(207,124){$\frac 12 I$}\put(225,51){$Q_1$}
\put(155,66){$Q_2$}
\put(157,162){$Q_3$}
\put(229,175){$Q_4$}
\put(203,195){$\partial K$}
\end{picture}
\end{figure}

\subsection{An explicit formula in three dimensions}\label{3d}
In this section we provide an explicit formula for the limiting energy density of a three-dimensional system. We consider nearest and next-to-nearest interactions on a cubic lattice for a special choice of the potentials. Our dual-lattice approach may be easily extended to the case of energy densities with four-point interactions of the type ${\bf f}(p,q,r,s)$ depending on the values that the microscopic vector field $u$ takes on the $4$ vertices of each face of the cubic cell and are invariant under permutations of the arguments. If our context, we consider ${\bf f}$ that can be written as a sum of two-point potentials. In this case, we obtain some relations between nearest and next-to-nearest neighbor interactions giving rise to energies as in \eqref{energy0thorder3d} below.

Following the scheme of Section \ref{2d} we consider the family of energies:
\begin{eqnarray}\label{energy0thorder3d}
F_\e(Q)=
\begin{cases}\displaystyle
\sum_{|i-j|=1}\e^3 f(u_i,\ u_j)+\frac{1}{4}\sum_{|i-j|=\sqrt{2}}\e^3 f(u_i,\ u_j)& \text{if $Q\in C_\e(\Om;K)$}\\
+\infty&\text{otherwise}.
\end{cases}
\end{eqnarray}
\begin{remark}
We observe that the choice of the next to nearest neighbor potentials as being exactly one fourth of the nearest-neighbor potential is crucial in deriving an explicit formula. It makes compatible the algebraic decomposition in Lemma \ref{decomp3} below with the topological structure of the graph given by the nearest and next-to-nearest bonds of the cubic lattice. 
\end{remark}
 \begin{lemma}\label{decomp3}
 Let $K$ be as in \eqref{convex} with $N=3$ and let $Q\in K$. Then there exist $u,v,w,z\in S^2$ such that 
 \begin{equation}\label{eqdecomp3}
 Q=\frac14 (u\otimes u+v\otimes v+w\otimes w+z\otimes z)
 \end{equation} 
 \end{lemma}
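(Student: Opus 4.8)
The statement asks to show that any $Q\in K$ (with $N=3$) can be written as an average of four rank-one projections $u\otimes u$, $v\otimes v$, $w\otimes w$, $z\otimes z$ with $u,v,w,z\in S^2$. The natural approach mirrors the two-dimensional decomposition in Proposition \ref{formula_magica}, but now we must split four spectral contributions rather than two. First I would diagonalize: by the positive semidefiniteness and trace condition, write $Q=\sum_{i=1}^3 \lambda_i\, n_i\otimes n_i$ in an orthonormal eigenbasis $\{n_1,n_2,n_3\}$ of $\R^3$ with $\lambda_i\ge 0$ and $\sum_i\lambda_i=1$.

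The key idea is to realize $Q$ as $\frac14$ of a sum of four unit-vector projections by choosing each of the four vectors to be a suitable combination of the eigenvectors. Writing each candidate vector as $\pm\sqrt{a}\,n_1\pm\sqrt{b}\,n_2\pm\sqrt{c}\,n_3$ for appropriate nonnegative coefficients, the diagonal contributions add up while the cross terms $n_i\otimes n_j$ ($i\neq j$) can be made to cancel by an appropriate choice of signs. Concretely, I would look for a choice so that $\frac14\sum$ of the four projections reproduces the diagonal matrix $\diag(\lambda_1,\lambda_2,\lambda_3)$ in the eigenbasis. Since each unit vector contributes $1$ to the total trace and we average four of them, the trace condition $\sum_i\lambda_i=1$ is automatically consistent with the normalization by $\frac14$. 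The sign pattern that kills all off-diagonal terms is the one where the products of corresponding signs sum to zero over the four vectors; this is exactly the structure of a $2\times 2$ Hadamard-type sign assignment on three coordinates, and with four vectors one has enough freedom to annihilate all three off-diagonal entries simultaneously.

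The main technical point is verifying that the required coefficients are genuinely nonnegative and that the resulting vectors lie on $S^2$. Imposing unit length on each of the four vectors forces $a+b+c=1$, and matching the diagonal entries forces $a=\lambda_1$, $b=\lambda_2$, $c=\lambda_3$ (up to the averaging), which is consistent precisely because $\sum_i\lambda_i=1$. Nonnegativity of $a,b,c$ is guaranteed by $\lambda_i\ge 0$, so the square roots are real and the vectors are well-defined elements of $S^2$. I expect the main obstacle to be the bookkeeping of signs: one must exhibit an explicit choice of the four sign triples so that each of the three off-diagonal sums $\sum_{\text{four vectors}} (\text{sign}_i)(\text{sign}_j)$ vanishes for every pair $i\neq j$. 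The cleanest route is to pick the four sign vectors to be $(+,+,+)$, $(+,-,-)$, $(-,+,-)$, $(-,-,+)$; one then checks directly that for each pair of distinct coordinates the four products cancel in pairs, which yields \eqref{eqdecomp3}. Unlike the planar case, I would not expect (nor need) uniqueness here, so the proof ends once existence is established by this explicit construction.
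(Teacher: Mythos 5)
Your construction is correct, but it takes a genuinely different route from the paper's. You build all four vectors symmetrically as $\pm\sqrt{\lambda_1}\,n_1\pm\sqrt{\lambda_2}\,n_2\pm\sqrt{\lambda_3}\,n_3$ with the even-sign pattern $(+,+,+)$, $(+,-,-)$, $(-,+,-)$, $(-,-,+)$; unit length follows instantly from $\sum_i\lambda_i=1$, the diagonal entries come out as $\lambda_i$ since each $(\epsilon_i^{(k)})^2=1$, and each off-diagonal sum $\sum_k\epsilon_i^{(k)}\epsilon_j^{(k)}$ vanishes because $\epsilon\mapsto\epsilon_i\epsilon_j$ is a nontrivial character on that sign subgroup (one can also just check the three pairs by hand, as you indicate). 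The paper instead pairs the eigenvectors two at a time: it introduces $\delta=\lambda_2+\lambda_3-\tfrac12$, proves the auxiliary inequality $\delta\le\lambda_3$, and takes $u,v=\sqrt{2\lambda_2}\,e_2\pm\sqrt{2(\lambda_3-\delta)}\,e_3$ and $w,z=\sqrt{2\lambda_1}\,e_1\pm\sqrt{2\delta}\,e_3$, so each vector lives in a coordinate plane and the normalization $2(\lambda_2+\lambda_3-\delta)=2(\lambda_1+\delta)=1$ has to be engineered. Your version buys simplicity and symmetry: nonnegativity of the coefficients under the square roots is immediate from $\lambda_i\ge0$, no auxiliary inequality is needed, and the argument visibly generalizes (it is the Hadamard-sign averaging that kills cross terms). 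The paper's version buys nothing essential here beyond producing vectors supported on coordinate planes, which plays no role in how the lemma is used later (it only guarantees that the minimization class in the definition of $\widehat f$ is nonempty). You are also right that uniqueness is neither claimed nor needed. In short: a correct and arguably cleaner alternative proof.
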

 \begin{proof}
 Let $0\leq\lambda_1\leq\lambda_2\leq\lambda_3$ be such that
 \begin{equation*}
 Q=\lambda_1 e_1\otimes e_1+\lambda_2 e_2\otimes e_2+\lambda_3 e_3\otimes e_3.
 \end{equation*}
From ${\rm tr}\,Q=1$ we get that $\frac12\leq\lambda_2+\lambda_3\leq 1$. We now set $\delta:=\lambda_2+\lambda_3-\frac12$ and observe that $\delta\leq\lambda_3$. In fact, assuming by contradiction $\delta>\lambda_3$ we would have $2(\lambda_2+\lambda_3)-1=2\delta>2\lambda_3\geq \lambda_2+\lambda_3$ that would imply $\lambda_2+\lambda_3>1$. Hence, we may define 
\begin{eqnarray}
u&=&\sqrt{2\lambda_2}e_2+\sqrt{2(\lambda_3-\delta)}e_3,\\
v&=&\sqrt{2\lambda_2}e_2-\sqrt{2(\lambda_3-\delta)}e_3,\\
w&=&\sqrt{2\lambda_1}e_1+\sqrt{2\delta}e_3,\\
z&=&\sqrt{2\lambda_1}e_1-\sqrt{2\delta}e_3.
\end{eqnarray}
Since, by the definition of $\delta$, we have that $2(\lambda_2+\lambda_3-\delta)=2(\lambda_1+\delta)=1$, it follows that $u,v,w,z\in S^2$ while a direct computation shows \eqref{eqdecomp3}.
 \end{proof}

We now set
\begin{equation}\label{fhat3d}
\widehat f(Q):=
\min\Bigl\{\sum_{\substack{i,j=1\\ i<j}}^4f(u_i,u_j):\ u_i\in S^2,\ \frac14\sum_{i=1}^4u_i\otimes u_i=Q\Bigr\}.
\end{equation}

\begin{theorem}\label{theorem:formula3d}
Let $F_\e\colon L^\infty(\Om;\Mtre)\to \R \cup\{\infty\}$ be defined by \eqref{energy0thorder3d}. Then $F_\e$ $\Gamma$-converges with respect to the weak$^*$-topology of $L^\infty(\Om;\Mtre)$ to the functional $F\colon L^\infty(\Om;\Mtre)\to \R \cup\{\infty\}$ defined by
$$
F(Q):=\begin{cases}
         \displaystyle
         \frac32\int_{\Om} \widehat f^{**}(Q(x))\,dx\quad\hbox{if }Q \in L^\infty(\Om; K)\\
         +\infty \quad\hbox{otherwise}.
         \end{cases}
$$
In particular $f_{\rm hom}=\frac32\widehat f^{**}$.
\end{theorem}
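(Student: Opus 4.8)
The plan is to transport the two-dimensional dual-lattice scheme of Section \ref{2d} to the square faces of the cubic lattice, the role of the bond midpoints being now played by the centers of the faces of the cells $\e\{i+W_3\}$. To each face $F$, with vertices $p_1,p_2,p_3,p_4$, I associate the dual value $Q^F:=\frac14\sum_{r=1}^4 Q(p_r)=\frac14\sum_r u(p_r)\otimes u(p_r)$, and I let $Q'_\e$ be the piecewise-constant interpolation equal to $Q^F$ on the corresponding dual cell. The starting point is the combinatorial identity, valid up to a boundary term $o(1)$,
\begin{equation*}
F_\e(Q)=\frac{\e^3}{2}\sum_F\Big(\sum_{1\le r<s\le 4} f(u(p_r),u(p_s))\Big),
\end{equation*}
which I would verify by counting incidences: each nearest-neighbor bond lies on exactly $4$ faces while each next-to-nearest (face-diagonal) bond lies on exactly $1$ face, so that the weight $\tfrac14$ carried by the diagonal interactions in \eqref{energy0thorder3d} is precisely what makes the two sides agree. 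Since the inner sum is $\ge\widehat f(Q^F)$ by the very definition \eqref{fhat3d}, this yields the analog of the trivial estimate \eqref{trivialestimate}, namely $F_\e(Q)\ge\frac32\int_\Om\widehat f(Q'_\e)\,dx+o(1)$; here the constant $\frac32$ arises from $\frac12$ (the prefactor above) times the face density $3$ per unit volume of the cubic lattice.

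Next I would prove the three-dimensional counterpart of Lemma \ref{easy}, namely $Q_\e-Q'_\e\wtos 0$ in $L^\infty(\Om;\Mtre)$. Testing against a fixed $A\in\Mtre$ on $\Om'\subset\subset\Om$ and summing, the contribution of each node $p$ to $\int A:Q'_\e\,dx$ is $(\#\{F\ni p\})\cdot\frac14\cdot\frac{\e^3}{3}\,A:Q_p=12\cdot\frac14\cdot\frac{\e^3}{3}\,A:Q_p=\e^3\,A:Q_p$, which is exactly its contribution to $\int A:Q_\e\,dx$; hence the two integrals differ only by a boundary term that vanishes as $\e\to0$. With this in hand the $\Gamma$-$\liminf$ inequality is immediate: if $Q_\e\wtos Q$ then $Q'_\e\wtos Q$, and using $\widehat f\ge\widehat f^{**}$ together with the weak$^*$ lower semicontinuity of the convex integral $Q'\mapsto\frac32\int_\Om\widehat f^{**}(Q')\,dx$ gives $\liminf_\e F_\e(Q_\e)\ge\frac32\int_\Om\widehat f^{**}(Q)\,dx$.

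The core of the proof is the $\Gamma$-$\limsup$ inequality, i.e.\ the construction of recovery sequences attaining the convexified density $\widehat f^{**}$. By the usual localization and density reduction it suffices to treat a constant target $Q\in K$, for which I would write $\widehat f^{**}(Q)=\sum_l\lambda_l\widehat f(Q_l)$ as a finite convex combination realizing the convex envelope, with $\sum_l\lambda_l Q_l=Q$. For each $l$ I pick, via Lemma \ref{decomp3}, a quadruple $(u_l,v_l,w_l,z_l)\in(S^2)^4$ with $\frac14(u_l\otimes u_l+\cdots+z_l\otimes z_l)=Q_l$ that \emph{minimizes} $\sum_{r<s}f(\cdot,\cdot)$, so that this sum equals $\widehat f(Q_l)$. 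The decisive point---this is exactly the compatibility alluded to in the Remark following \eqref{energy0thorder3d}---is that one can color $\Z^3$ with four colors so that the four vertices of \emph{every} face, in \emph{every} coordinate orientation, receive four distinct colors. Such a coloring does exist: assign to the $8$ vertices of $\{0,1\}^3$ the four colors by antipodal pairs (e.g. $(0,0,0),(1,1,1)\mapsto 1$; $(1,0,0),(0,1,1)\mapsto 2$; $(0,1,0),(1,0,1)\mapsto 3$; $(0,0,1),(1,1,0)\mapsto 4$); one checks that each of the six faces of the unit cube is rainbow, and then extends $2$-periodically. Coloring the nodes by $u_l,v_l,w_l,z_l$ accordingly, every face $F$ carries the full quadruple, whence $Q^F=Q_l$ and its face energy equals $\widehat f(Q_l)$; moreover each color occupies a fraction $\tfrac14$ of the nodes, so the induced $Q_\e\wtos Q_l$. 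Mixing these pure patterns on a mesoscale with volume fractions $\lambda_l$ (the interfacial faces having vanishing density and bounded energy by (H3)) produces $Q_\e\wtos Q$ with $F_\e(Q_\e)\to\frac32\sum_l\lambda_l\widehat f(Q_l)|\Om|=\frac32\int_\Om\widehat f^{**}(Q)\,dx$.

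The main obstacle is precisely this last construction: unlike the planar case, where a single checkerboard realizes a two-vector decomposition, here one must realize a \emph{four}-vector decomposition simultaneously on all three families of faces, and a careless assignment of the four values to nodes will spoil $Q^F$ on the faces of some orientation. The existence of the face-rainbow $4$-coloring above is what reconciles the algebraic splitting of Lemma \ref{decomp3} with the incidence geometry of the cubic lattice, and it is the reason the next-to-nearest coupling must be weighted by exactly $\tfrac14$; once it is in place, the remaining estimates are routine adaptations of Section \ref{2d}.
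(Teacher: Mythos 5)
Your proposal is correct and follows essentially the same route as the paper: the face-based dual interpolation, the incidence count (each nearest-neighbor bond lying on $4$ faces and each face-diagonal on exactly $1$, which is why the diagonal weight must be $\tfrac14$), the node-count $12\cdot\tfrac14\cdot\tfrac13=1$ for the analog of Lemma \ref{easy}, and a $2$-periodic face-rainbow assignment of the four vectors from Lemma \ref{decomp3} --- your explicit antipodal-pair coloring is precisely the configuration the paper depicts in Figure \ref{optimal_3d}. The only cosmetic difference is in the convexification step of the upper bound: the paper invokes Theorem \ref{32-th:homog-Li} and the convexity of $f_{\rm hom}$ to pass from $f_{\rm hom}\le\frac32\widehat f$ to $f_{\rm hom}\le\frac32\widehat f^{**}$, whereas you realize $\widehat f^{**}$ directly by mesoscale lamination; both are standard and correct.
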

\begin{proof}
Let $Q\in L^{\infty}(\Omega;K)$ and let $Q_\e\in C_\e(\Omega;K)$ be such that $Q_\e\wtos Q$. For all $i\in \Z_\e(\Omega)$ we set $u_i$ such that $Q(\e i)=Q(u_i)$ and define the dual cells $P_{i}^{kl}$ (see Figure \ref{fig: dual_3d}) as follows
\begin{equation}
P_{i}^{kl}={\rm co}\Bigl\{S_i^{kl},i+\frac{1}{2}(e_k+e_l)+\frac{1}{2}(e_k\wedge e_l),i+\frac{1}{2}(e_k+e_l)-\frac{1}{2}(e_k\wedge e_l)\Bigr\}
\end{equation}
where $e_k$ and $e_l$ are two distinct vectors of the canonical base of $\R^3$ and $S_i^{kl}$ is the unitary square in the plane spanned by them having $i$ as left bottom corner; namely, 
\begin{equation}
S_i^{kl}={\rm co}\{i,i+e_k,i+e_k+e_l,i+e_l\}. 
\end{equation}
Observe that $|P_i^{kl}|=\frac13$. To $Q_\e$ we associate the dual piecewise-constant interpolation $Q_\e'$ defined as 
\begin{equation}\label{Qprimo}
Q_\e'(x)=\frac14\left(Q_\e(\e i)+Q_\e(\e (i+e_k))+Q_\e(\e(i+e_k+e_l))+Q_\e(\e (i+e_l))\right)\text{for all }x\in \e P_{i}^{kl}.
\end{equation}
First, we show that $Q_\e-Q'_\e\wtos 0$. Let us fix $\Omega'\subset\subset\Omega$ and $A\in \Mtre$. We have that
\begin{eqnarray*}
&&\int_{\Omega'}A:Q_\e'(x)\, dx=\sum_{i\in\Z_\e(\Omega')}\sum_{\substack{k,l=1\\ k<l}}^3\int_{\e P_i^{kl}} A:Q_\e'(x)\, dx+o(1)\\
&=&\sum_{i\in\Z_\e(\Omega')}\sum_{\substack{k,l=1\\ k<l}}^3\e^3A:\frac{1}{12}\Bigl(Q(\e i)+Q_\e(\e (i+e_k))+Q_\e(\e(i+e_k+e_l))+Q_\e(\e (i+e_l))\Bigr)+o(1)\\
&=&\sum_{j\in\Z_\e(\Omega')}\e^3A:Q_\e(\e j)+o(1)=\int_{\Omega'}A:Q_\e(x)\, dx+o(1).
\end{eqnarray*} 
The last equality is obtained by reordering the sums observing that each $j\in\Z_\e(\Omega')$ appears exactly $12$ times ($4$ for each of the three possible choices of $k<l$). By the arbitrariness of $A$ and $\Omega'$ it follows that $Q_\e-Q'_\e\wtos 0$. 
We now prove the liminf inequality. We may write 
\begin{eqnarray*}
F_\e(Q_\e)&=&2\ \frac14\sum_{i\in\Z_\e(\Omega')}\sum_{\substack{k,l=1\\ k<l}}^3\e^3\Bigl[f(u(\e i),u(\e (i+e_k)))+f(u(\e i),u(\e (i+e_l)))+ \nonumber\\ &&f(u(\e (i+e_k)),u(\e (i+e_k+e_l)))+f(u(\e (i+e_l)),u(\e (i+e_k+e_l)))+ \nonumber\\&& \vphantom{\sum_{i\in\Z_\e(\Omega')}\sum_{\substack{k,l=1\\ k<l}}^3\e^3}f(u(\e i),u(\e (i+e_k+e_l)))+f(u(\e (i+e_k)),u(\e (i+e_l)))\Bigr]+o(1),
\end{eqnarray*}
where the prefactor $2$ appears since we are passing from an unordered sum to an ordered one, while the additional $1/4$ in front of the nearest-neighbor interaction potentials is due to the fact that each nearest-neighboring pair (apart from those close to the boundary which carry an asymptotically negligible energy) corresponds to $4$ distinct choices of the indices $i,k,l$.  Taking into account \eqref{fhat3d} we continue the above estimate as
\begin{eqnarray*}
F_\e(Q_\e)&\geq& \frac12\sum_{i\in\Z_\e(\Omega')}\sum_{\substack{k,l=1\\ k<l}}^3\e^3\widehat f\Bigl(Q'_\e\Bigl(\e i+\frac\e2(e_k+e_l)\Bigr)\Bigr)+o(1)\\&=&\frac32 \sum_{i\in\Z_\e(\Omega')}\sum_{\substack{k,l=1\\ k<l}}^3\int_{\e P_i^{kl}}\widehat f(Q'_\e(x))\ dx+o(1)\\
&\geq&\frac32\int_\Omega\widehat f^{**}(Q'_\e(x))\ dx+o(1)
\end{eqnarray*}
The liminf inequality follows passing to the liminf as $\e\to 0$. 

We now prove the limsup inequality. By Theorem \ref{32-th:homog-Li} it suffices to prove that for any constant $Q\in K$ and every open set $A\subset\Omega$ it exists a sequence $Q_\e\wtos Q$ such that 
\begin{equation}\label{claim_limsup}
\limsup_\e F_\e(Q_\e, A)\leq\frac32\widehat f(Q)|A|.
\end{equation}
In fact, by formula \eqref{Fhom}, the arbitrariness of $Q$ and $A$ and the convexity of $f_{\rm hom}$, this would imply that 
\begin{equation*}
f_{\rm hom}(Q)\leq \frac32\widehat f^{**}(Q)
\end{equation*}
thus concluding the proof of the limsup inequality. By the locality of the construction we will prove \eqref{claim_limsup} in the case $A=(-l,l)^3$, where, up to a translation, we are supposing that $0\in\Omega$. 
Let $p,q,r,s$ realize the minimum in formula \eqref{fhat3d} for the given $Q$. We set $u(0)=p$ and construct a 2-periodic function $u$ whose unit cell is pictured in Figure \ref{optimal_3d}. We then set $u_\e(\e\alpha)=u(\alpha)$
and $Q_\e=u_\e\otimes u_\e$.
Note that for any choice of $i\in\Z_\e(\Omega)$ and $k,l\in\{1,2,3\}$ with $k<l$ the values of $u_\e$ on the vertices of $S_i^{kl}$ are always all the $4$ values $p,q,r,s$. As a result the dual interpolation $Q'_\e$ constructed as in \eqref{Qprimo} is constantly equal to $Q$. Since $Q_\e'-Q_\e\wtos 0$ this gives that $Q_\e\wtos Q$. Furthermore the following equality holds
\begin{eqnarray}
F_\e(Q_\e,A)&=&2\ \frac14\sum_{i\in\Z_\e(A)}\sum_{\substack{k,l=1\\ k<l}}^3\e^3[f(p,q)+f(r,s)+f(p,r)+f(q,s)+ f(p,s)+f(q,r)]+o(1)\nonumber\\ &=&\frac32 \sum_{i\in\Z_\e(A)}\sum_{\substack{k,l=1\\ k<l}}^3|\e P_i^{kl}|\widehat f(Q)+o(1)=\frac32\widehat f(Q)|A|+o(1)
\end{eqnarray}
which implies \eqref{claim_limsup}.
\end{proof}

\begin{figure}
\begin{center}
\includegraphics[scale=.5 ]{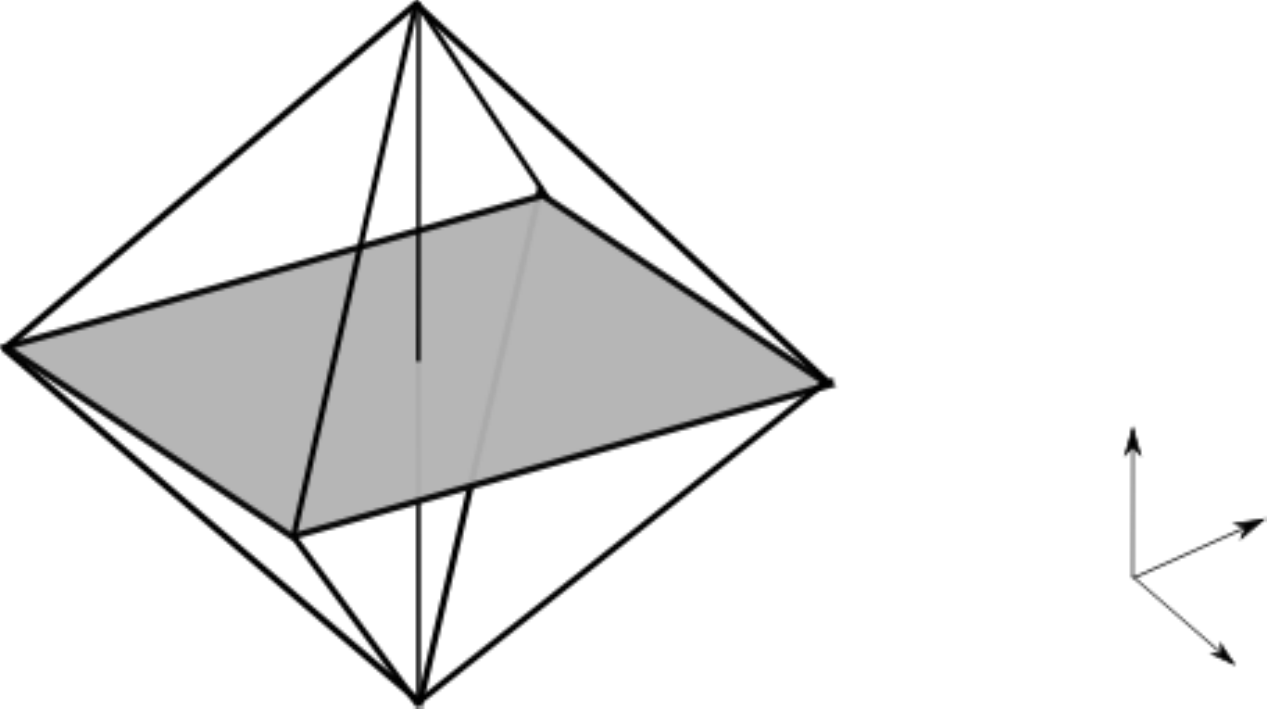}
\caption{The dual cell $P_i^{kl}$ with $S_i^{kl}$ in grey}\label{fig: dual_3d}
\end{center}
\begin{picture}(0,0)
\put(123,94){$i$}\put(301,43){$e_k$}
\put(306,73){$e_l$}
\end{picture}
\end{figure}

\begin{figure}
\begin{center}
\includegraphics[scale=.5 ]{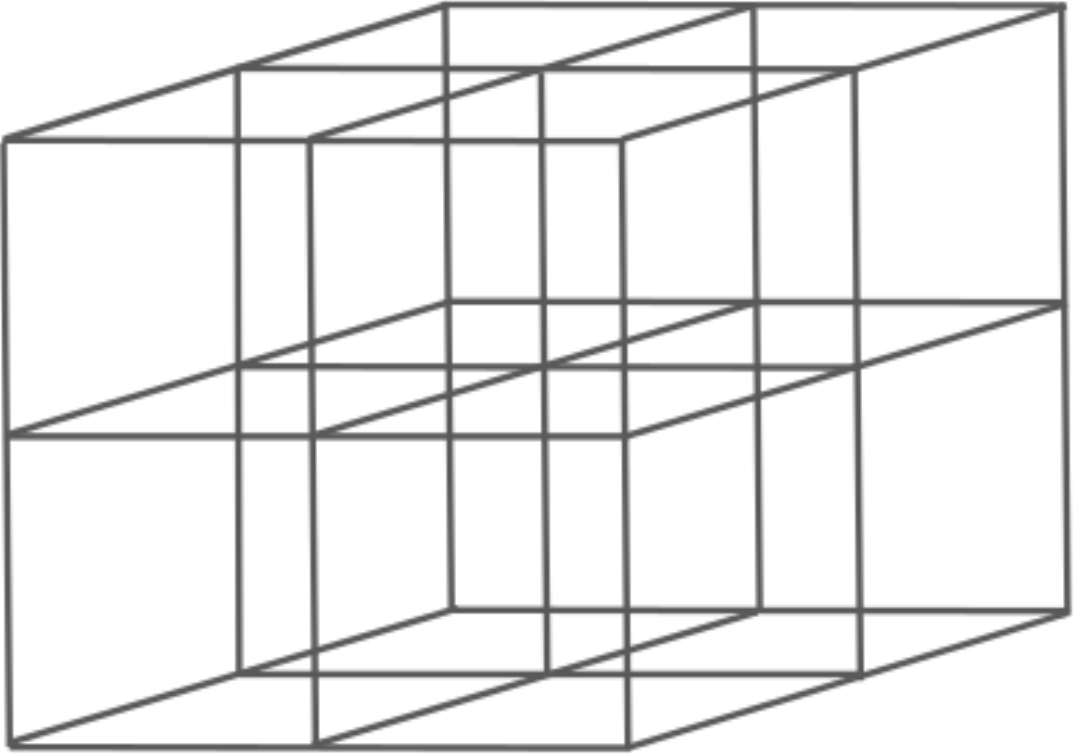}
\caption{The recovery sequence in Theorem \ref{theorem:formula3d} on a periodicity cell}
\label{optimal_3d}
\end{center}
\begin{picture}(0,0)
\put(216,103){{\footnotesize $p$}}\put(267,103){{\footnotesize $q$}}\put(168,103){{\footnotesize $q$}}
\put(181,93){{\footnotesize $r$}}\put(252,110){{\footnotesize $r$}}
\put(216,145){{\footnotesize $s$}}\put(216,58){{\footnotesize $s$}}
\put(263,145){{\footnotesize $r$}}\put(171,145){{\footnotesize $r$}}
\put(266,58){{\footnotesize $r$}}\put(169,58){{\footnotesize $r$}}
\put(182,39){{\footnotesize $q$}}\put(252,67){{\footnotesize $q$}}
\put(182,136){{\footnotesize $q$}}\put(247,155){{\footnotesize $q$}}
\put(237,88){{\footnotesize $s$}}\put(296,110){{\footnotesize $s$}}
\put(138,93){{\footnotesize $s$}}\put(202,110){{\footnotesize $s$}}
\put(228,136){{\footnotesize $p$}}\put(291,155){{\footnotesize $p$}}
\put(135,136){{\footnotesize $p$}}\put(201,155){{\footnotesize $p$}}
\put(225,39){{\footnotesize $p$}}\put(292,67){{\footnotesize $p$}}
\put(135,39){{\footnotesize $p$}}\put(203,67){{\footnotesize $p$}}
\end{picture}
\end{figure}

\section{Gradient-type and concentration scalings}\label{sec:grad}
In this section we assume that $\Om\subset\R^2$ is a simply connected set. For all $s\in (0,1]$ we define the subset $\partial K_s$ of $K$ as 
\begin{equation}\label{partialK_s}
\partial K_s:=\Bigl\{Q\in K,\, \Bigl|Q-\frac{1}{2}I\Bigr|=\frac{\sqrt{2}}{2}s\Bigr\}.
\end{equation}
In the case $s=1$ we omit the subscript $s$.; in that case we have
\begin{equation}\label{partialK}
\partial K=\{Q\in K,\, |Q|=1\}.
\end{equation}
\begin{remark}\label{remark:approx}
The simple-connectedness assumption on $\Om$ plays a crucial role in the following argument: since by \eqref{KN2} $K$ can be identified with a ball in the two-dimensional affine space of the matrices $Q\in\Mdue$ with ${\rm tr}\,Q=1$ and consequently $\partial K_s$ can be identified with $S^1$, by the simple connectedness of $\Omega$ we may apply Theorem $3$ in \cite{BetZhe} stating that $C^{\infty}(\Om;\partial K_s)\cap W^{1,2}(\Om;\partial K_s)$ is dense in the space of $W^{1,2}(\Om;\partial K_s)$ strongly in $W^{1,2}(\Om;\partial K_s)$. This ensures that any given $Q\in W^{1,2}(\Om;\partial K_s)$ is orientable; that is, there exists $n\in W^{1,2}(\Om;S^1)$ such that 
\begin{equation}\label{lifting}
Q(x)=s\Bigl(n(x)\otimes n(x)-\frac12 I\Bigr)+\frac12 I.
\end{equation}
This has the following two consequences. First of all, setting $n^\perp (x)$ the counterclockwise rotation of $n(x)$ by $\frac \pi2$, we have that 
\begin{equation}
Q(x)=\frac{1+s}{2}n(x)\otimes n(x)+\frac{1-s}{2}n^\perp(x)\otimes n^\perp(x);
\end{equation}  
hence, setting 
\begin{eqnarray}\label{decomposizione}
v(x)=\sqrt{\frac{1+s}{2}}n(x)+\sqrt{\frac{1-s}{2}}n^\perp(x),\nonumber\\ \\
w(x)=\sqrt{\frac{1+s}{2}}n(x)-\sqrt{\frac{1-s}{2}}n^\perp(x), \nonumber
\end{eqnarray}
one has that 
\begin{equation}\label{formula_magica_2}
Q(x)=\frac12v(x)\otimes v(x)+\frac12w(x)\otimes w(x),
\end{equation}
where $v,w\in W^{1,2}(\Om; S^1)$. If $Q\in C^{\infty}(\Om;\partial K_s)\cap W^{1,2}(\Om;\partial K_s)$ we moreover have that $v,w\in C^{\infty}(\Om; S^1)\cap W^{1,2}(\Om; S^1)$.
Finally, if $Q\in W^{1,2}(\Om; \partial K_s)$ and $n \in W^{1,2}(\Om; S^1)$ are related by \eqref{lifting}, by \eqref{fattoregiusto} and a density argument one gets
\begin{equation}\label{fattoregiusto2}
|\nabla Q(x)|^2= 2 s^2|\nabla n(x)|^2
\end{equation}
for a.e.\ $x\in \Om$.
\end{remark}

\subsection{Sobolev scaling - Selection of uniform states}\label{uniform states}

In this section we consider higher-order scalings of energies of the form:
\begin{equation}\label{Eeps}
E_\e(u)=\sum_{|i-j|=1}\e^2h(|(u_i\cdot u_j)|)
\end{equation} 
where $h$ is a bounded Borel function. Via the usual identification $Q(\e i)=Q(u_i)$ we may associate to $E_\e(u)$ the functional $F_\e(Q)$ as done in the previous section. We now scale $F_\e$ by $\e^2$ as follows: 
\begin{equation*}
F_\e^1(Q):=\frac{F_\e(Q)-\inf F_\e}{\e^2}.
\end{equation*}
As usual we extend this functional (without renaming it) and consider
\begin{eqnarray}\label{energy1storder}
F^1_\e(Q)=
\begin{cases}
\sum_{|i-j|=1}h(|(u_i\cdot u_j)|)-\inf h& \text{if $Q\in C_\e(\Om;K)$}\\
+\infty&\text{otherwise}.
\end{cases}
\end{eqnarray}
A relevant example of an energy of the type above is the one in the Lebwohl-Lasher model  (see \cite{CDSZ}, \cite{LebLas}), which corresponds to the case $h(x)=-x^2$.

We consider the case when  $\inf h=h(1)$, or equivalently the function $\widehat f$ in \eqref{grt} attains its minimum on all points of $\partial K$. In this case, by Remark \ref{r: h++}, the zero-th order $\Gamma$-limit of $\e^2 F^1_\e$ is identically $0$. We will additionally assume that $h\in C^1([0,1])$ and that there exists $\delta>0$ such that $h \in C^2([1-\delta, 1])$. Under these hypotheses we are able to estimate the $\Gamma$-limsup of $F^1_\e$. In order to estimate the $\Gamma$-liminf, we will make the following assumption on $h$: there exists $\gamma>0$ such that
\begin{equation}\label{hypo}
h(x)-h(1)\ge \frac{\gamma}{2}(1-x^2)
\end{equation}
for every $x \in [0,1]$.

\begin{remark}
Hypothesis \eqref{hypo} implies in particular that $1$ is the unique absolute minimum for $h$ in $[0,1]$. Since a function $\gamma$ as in \eqref{hypo} must satisfy $\gamma \le |h'(1)|$, when such an hypothesis holds, we have $h'(1) <0$. If $h'(1)=0$, so that in particular \eqref{hypo} cannot hold, we will show later that the $\Gamma$-liminf lower bound of Theorem \ref{gradterm} is not true, even for convex $h$. Namely, in this degenerate case the $\Gamma$-liminf can be finite also on functions whose gradient is not in $L^2$. This will be shown in the example at the end of this section.
Examples of functions satisfying \eqref{hypo} are all convex functions on $[0,1]$ with $h'(1) <0$. In this case, indeed, by convexity and since $h'(1)<0$ one has
$$
h(x)-h(1)\ge h'(1)(x-1)=|h'(1)|(1-x)
$$
for every $x \in [0,1]$. By means of the elementary inequality $1-x \ge \frac12 (1-x^2)$ for every $x \in [0,1]$, one gets \eqref{hypo} with $\gamma$ that can be taken exactly equal to $|h'(1)|$. For such energy densities, the full $\Gamma$-convergence result of Theorem \ref{gradterm}, part (c), holds.

Among nonconvex functions, \eqref{hypo} is for instance satisfied in the case $h(x)=-x^p$ with $p\ge 1$. In this case, $\gamma=\min\{2,p\}$. In particular, if $1\le p\le 2$, then $\gamma=p=|h'(1)|$ and the full $\Gamma$-convergence result again holds. Otherwise, we only have a lower bound on the $\Gamma$-liminf of $F^1_\e$ which is of the same type of the upper bound on the $\Gamma$-limsup, but with a different constant multiplying the Dirichlet integral.
\end{remark}

We will prove the following result.
 
\begin{theorem}\label{gradterm}
Let $F^1_\e\colon L^\infty(\Om;\Mdue)\to \R \cup\{\infty\}$ be defined by \eqref{energy1storder} with $h\in C^1([0,1])$.
Assume that there exists $\delta>0$ such that $h \in C^2([1-\delta, 1])$. Define the functional $F^1\colon L^\infty(\Om;\Mdue)\to \R \cup\{\infty\}$ as
$$
F^1(Q):=\begin{cases}
         \displaystyle
         \frac{|h'(1)|}{2}\int_{\Om} |\nabla Q(x)|^2\,dx\quad\hbox{if }Q \in W^{1,2}(\Om; \partial K)\\
         +\infty \quad\hbox{otherwise}
         \end{cases}
$$
with $\partial K$ as in \eqref{partialK}.

{\rm(a)} Let $(F^1)''$ be the $\Gamma$-limsup of  $F^1_\e$ with respect to the weak$^*$-topology of $L^\infty(\Om;\Mdue)$. Then $(F^1)''\le F^1$.

{\rm(b)} Assume that in addition \eqref{hypo} holds, and define the functional $F^1_\gamma \colon L^\infty(\Om;\Mdue)\to \R \cup\{\infty\}$ by
$$
F^1_\gamma(Q):=\begin{cases}
         \displaystyle
         \frac{\gamma}{2}\int_{\Om} |\nabla Q(x)|^2\,dx\quad\hbox{if }Q \in W^{1,2}(\Om; \partial K)\\
         +\infty \quad\hbox{otherwise}\,.
         \end{cases}
$$
Denote by $(F^1)'$ the $\Gamma$-liminf of  $F^1_\e$ with respect to the weak$^*$-topology of $L^\infty(\Om;\Mdue)$. Then $(F^1)'\ge F^1_\gamma$.

{\rm ({c})} If in particular we can take $\gamma=|h'(1)|$ in \eqref{hypo}, then $F^1_\e$ $\Gamma$-converges with respect to the weak$^*$-topology of $L^\infty(\Om;\Mdue)$ to the functional $F^1$.
\end{theorem}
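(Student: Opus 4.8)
The plan is to establish the two one-sided estimates (a) and (b) separately; part (c) is then immediate, since under $\gamma=|h'(1)|$ one has $F^1_\gamma=F^1$ and therefore $(F^1)''\le F^1=F^1_\gamma\le(F^1)'\le(F^1)''$, forcing equality and $\Gamma$-convergence to $F^1$. Both estimates rest on two elementary facts. The first is the exact algebraic identity $|Q_i-Q_j|^2=2(1-(u_i\cdot u_j)^2)$ (the computation in the Proposition following \eqref{Q_energy}), where $Q_i=u_i\otimes u_i$. The second is the Taylor expansion $h(x)-h(1)=|h'(1)|(1-x)+O((1-x)^2)$, valid near $x=1$ because $h\in C^2([1-\delta,1])$ and $h'(1)=-|h'(1)|$, the minimum sitting at the right endpoint. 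I would use the $Q$-identity for the lower bound, where \eqref{hypo} is phrased through $1-x^2$, and the expansion together with the relation $1-u_i\cdot u_j=\tfrac12|u_i-u_j|^2$ for the upper bound.

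For part (a), I would first build recovery sequences for smooth targets. Given $Q\in W^{1,2}(\Om;\partial K)$ with $\partial K$ as in \eqref{partialK}, Remark \ref{remark:approx} provides an orientation $n\in W^{1,2}(\Om;S^1)$ with $Q=n\otimes n$, and by the density result of \cite{BetZhe} quoted there it suffices first to treat $n\in C^\infty(\Om;S^1)\cap W^{1,2}$. Sampling, I set $u_\e(\e i)=n(\e i)$ and $Q_\e=u_\e\otimes u_\e$; then $Q_\e\wtos Q$, since the piecewise-constant interpolation of the continuous field $n\otimes n$ converges uniformly on compact subsets. For $\e$ small the neighboring scalar products $u_i\cdot u_j=n(\e i)\cdot n(\e j)$ lie in $[1-\delta,1]$, so I may expand as above; the quadratic remainder contributes $O(\e^4)$ per bond over $O(\e^{-2})$ bonds, hence $o(1)$, while the leading term $\tfrac{|h'(1)|}{2}\sum_{|i-j|=1}|u_i-u_j|^2$ is a standard nearest-neighbor finite-difference energy converging to $|h'(1)|\int_\Om|\nabla n|^2$. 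By \eqref{fattoregiusto} this equals $\tfrac{|h'(1)|}{2}\int_\Om|\nabla Q|^2=F^1(Q)$. To remove smoothness I would approximate a general $Q\in W^{1,2}(\Om;\partial K)$ strongly in $W^{1,2}$ by smooth $Q_k$ (again \cite{BetZhe}), note $F^1(Q_k)\to F^1(Q)$ by continuity of the Dirichlet integral, and conclude $(F^1)''(Q)\le\liminf_k F^1(Q_k)=F^1(Q)$ using the weak$^*$ lower semicontinuity of the $\Gamma$-limsup.

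For part (b), I would take any $Q_\e\wtos Q$ with $\liminf F^1_\e(Q_\e)<\infty$ and insert \eqref{hypo}:
\[
F^1_\e(Q_\e)\ge\frac{\gamma}{2}\sum_{|i-j|=1}\bigl(1-(u_i\cdot u_j)^2\bigr)=\frac{\gamma}{4}\sum_{|i-j|=1}|Q_i-Q_j|^2,
\]
the right-hand side being exactly a discrete Dirichlet energy of the $Q$-field. The energy bound yields a uniform bound on this discrete Dirichlet energy; associating to $Q_\e$ its piecewise-affine interpolation and using $|Q_\e|\equiv1$, I obtain a uniform $W^{1,2}$ bound, hence (Rellich) strong $L^2$ convergence of a subsequence to $Q$, which forces $|Q|=1$ a.e.\ and therefore $Q\in W^{1,2}(\Om;\partial K)$ by \eqref{KN2}. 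Weak $W^{1,2}$ lower semicontinuity of the Dirichlet integral then gives $\liminf F^1_\e(Q_\e)\ge\tfrac{\gamma}{2}\int_\Om|\nabla Q|^2=F^1_\gamma(Q)$.

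The genuinely delicate point is the lower bound (b): everything hinges on the structural inequality \eqref{hypo}, whose failure exactly at $h'(1)=0$ is what makes the $\Gamma$-liminf degenerate (the concentration example announced at the end of the section). Granting \eqref{hypo}, the remaining work — passing from the discrete Dirichlet energy to $\int_\Om|\nabla Q|^2$ and upgrading weak$^*$ convergence to the strong $L^2$ convergence needed to retain the constraint $|Q|=1$, and hence the target space $W^{1,2}(\Om;\partial K)$ — is the standard discrete-to-continuum lower semicontinuity argument, the main care being that the piecewise-constant and piecewise-affine interpolations share the same limit (as in Lemma \ref{easy}).
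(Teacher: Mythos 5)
Your proposal is correct and follows essentially the same route as the paper: for the upper bound, a sampled recovery sequence $u_\e(\e i)=n(\e i)$ for smooth orientable $Q=n\otimes n$, the Taylor expansion of $h$ at $1$ with the quadratic remainder controlled by the $W^{1,\infty}$ bound $|u_i\cdot u_j|\ge 1-M\e^2$, and a density argument via Remark \ref{remark:approx}; for the lower bound, inequality \eqref{hypo} combined with the identity $|Q_i-Q_j|^2=2(1-(u_i\cdot u_j)^2)$, comparison with the Dirichlet energy of the piecewise-affine interpolation (the paper's Lemma \ref{Lemma_strong_conv}), and weak $W^{1,2}$ lower semicontinuity. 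Part (c) is obtained exactly as you state.
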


The following lemma will be useful in the proof.
\begin{lemma}\label{Lemma_strong_conv}
Let $\Om$ be an open subset of $\R^2$. Given a function $
u\colon \e\Z_\e(\Om) \to S^{1}$ let $Q_{\e}$ be the piecewise-constant interpolation of $Q(u_\e)$ and let $Q_\e^a$ be the piecewise-affine interpolation of $Q(u_\e)$ having constant gradient on triangles with vertices $\e i$ and longest side parallel to $e_1-e_2$. Then
\begin{equation}\label{interpol}
\int_{\Om_\e}|Q_\e(x)-Q^a_\e(x)|^2\,dx \le \frac{1}{2}\e^2 \int_{\Om_\e} |\nabla Q^a_\e(x)|^2\,dx 
\end{equation}
In particular, if $\int_{\Om_\e} |\nabla Q^a_\e(x)|^2\,dx$ is uniformly bounded, then $Q_\e$ is $L^2$-compact as $\e \to 0$, and each limit point $Q$ of $Q_\e$ belongs to $W^{1,2}(\Om;\partial K)$.
\end{lemma}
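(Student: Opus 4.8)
The plan is to treat the interpolation inequality \eqref{interpol} as the heart of the lemma; once it is available, the compactness statement follows from standard Sobolev/Rellich arguments. Both sides of \eqref{interpol} carry the same factor $\e^2$ under the change of variables $x=\e y$, so after cancelling it the inequality reduces to a scale-one statement. Moreover, on each closed lattice square $\e i+[0,\e]^2$ the functions $Q_\e$ and $Q_\e^a$ are determined only by the four values of $Q(u_\e)$ at its corners. I would therefore reduce \eqref{interpol} to a single \emph{local} estimate on the reference square $[0,1]^2$: writing $G_1,G_2$ for the constant gradients of $Q^a$ on the two reference triangles $T_1={\rm co}\{(0,0),(1,0),(0,1)\}$ and $T_2={\rm co}\{(1,0),(0,1),(1,1)\}$, it suffices to show $\int_{[0,1]^2}|Q-Q^a|^2\,dx\le\tfrac12\int_{[0,1]^2}|\nabla Q^a|^2\,dx$. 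Summing over the lattice squares contained in $\Om_\e$ then yields \eqref{interpol}; the half-cell offset between the piecewise-constant cells (centred at the lattice points) and the triangulation (with corners at the lattice points), as well as the cells meeting $\partial\Om_\e$, contribute only negligible boundary terms, which are harmless given the slack in the constant below.

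For the local estimate I would partition $[0,1]^2$ into the four quarter-squares around its corners, which are exactly the portions of the piecewise-constant cells lying in $[0,1]^2$. On the quarter around a corner $c$ one has $Q\equiv Q^a(c)$, so $Q(x)-Q^a(x)=Q^a(c)-Q^a(x)$. The key observation is that the segment from $c$ to any $x$ in its quarter stays inside a single triangle: this is immediate for $(0,0)\in T_1$ and $(1,1)\in T_2$, and it holds also for $(1,0)$ and $(0,1)$ precisely because these lie on the shared diagonal and are hence vertices of both triangles. Consequently $Q^a(c)-Q^a(x)=-\nabla Q^a\cdot(x-c)$ with $\nabla Q^a$ the constant gradient of the triangle containing $x$, and the Cauchy–Schwarz inequality gives $|Q-Q^a|^2\le|\nabla Q^a|^2\,|x-c|^2$. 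Integrating $|x-c|^2$ over each quarter-square produces the factor $\tfrac1{24}$, and summing the four contributions bounds the left-hand side by $\tfrac18(|G_1|^2+|G_2|^2)$, while the right-hand side equals $\tfrac12(|G_1|^2+|G_2|^2)$ since each triangle has area $\tfrac12$. This even gives the sharper constant $\tfrac14$, comfortably below the required $\tfrac12$.

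With \eqref{interpol} in hand the second assertion is routine. The uniform bound on $\int_{\Om_\e}|\nabla Q_\e^a|^2$ together with the uniform bound $|Q_\e^a|\le1$ (the values of $Q_\e^a$ lie in the bounded convex set $K$) makes $\{Q_\e^a\}$ bounded in $W^{1,2}$; by Rellich's theorem, along a subsequence $Q_\e^a\to Q$ strongly in $L^2$ and weakly in $W^{1,2}$, with $Q\in W^{1,2}(\Om)$ and $\int_\Om|\nabla Q|^2\le\liminf_\e\int_{\Om_\e}|\nabla Q_\e^a|^2$ by lower semicontinuity. Inequality \eqref{interpol} then gives $\|Q_\e-Q_\e^a\|_{L^2}^2\le\tfrac12\e^2\int_{\Om_\e}|\nabla Q_\e^a|^2\to0$, so $Q_\e\to Q$ in $L^2$ as well; thus $Q_\e$ is $L^2$-precompact and every limit point coincides with a limit of the $Q_\e^a$, hence lies in $W^{1,2}(\Om)$. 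Finally, since $Q_\e=u_\e\otimes u_\e$ takes values in $S^1_\otimes$, which by \eqref{modul} and \eqref{KN2} is exactly the closed set $\partial K$ of \eqref{partialK}, passing to an a.e.-convergent subsequence shows $Q(x)\in\partial K$ for a.e.\ $x$, i.e.\ $Q\in W^{1,2}(\Om;\partial K)$.

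The main obstacle, and the only genuinely geometric point, is the treatment of the two quarter-squares cut by the diagonal of the triangulation: a careless estimate there would involve the gradients of \emph{both} triangles along a crossing segment and ruin the constant. The argument above circumvents this by integrating from the corners $(1,0)$ and $(0,1)$ that sit on the shared edge, so that every segment stays within a single triangle and the clean Cauchy–Schwarz bound applies. Everything else—the scaling reduction, the boundary bookkeeping, and the concluding Rellich/a.e.-convergence argument—is standard.
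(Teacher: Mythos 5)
Your proof is correct in its essentials and follows the same route as the paper: the whole content of \eqref{interpol} is the observation that a lattice point is a vertex of \emph{every} triangle of the triangulation adjacent to it, so the segment joining it to any sufficiently close point lies in a single triangle and the mean value theorem applies with that triangle's constant gradient; the compactness statement then follows from Rellich's theorem exactly as you describe. The paper organizes the pointwise estimate over the cells $\e\{i+W\}$ centred at the lattice points and simply bounds $|x-\e i|^2\le \e^2/2$, whereas you integrate $|x-c|^2$ explicitly over quarter-squares and obtain the sharper constant $\tfrac14$; that is a genuine, if inessential, improvement. The one imprecision is your boundary bookkeeping: the corner-squares $\e j+[0,\e]^2$ contained in $\Om_\e$ do not tile $\Om_\e$ (there remains an uncovered layer, of width $\e/2$, made of quarter-cells whose siblings in the same corner-square fall outside $\Om_\e$), and the ``slack in the constant'' cannot absorb this --- a multiplicative slack controls neither an additive contribution from an uncovered region nor an enlargement of the domain over which $|\nabla Q^a_\e|^2$ is integrated beyond $\Om_\e$. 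The fix is immediate with the tool you already have: on each uncovered quarter-cell of a cell $\e\{i+W\}\subset\Om_\e$ apply the pointwise bound $|Q_\e(x)-Q^a_\e(x)|\le |x-\e i|\,|\nabla Q^a_\e(x)|$ with $|x-\e i|^2\le\e^2/2$ (this is precisely the paper's cell-based version of the estimate), so the layer contributes at most $\tfrac12\e^2\int_{\rm layer}|\nabla Q^a_\e|^2$, the two regions are disjoint subsets of $\Om_\e$, and the total is still bounded by $\tfrac12\e^2\int_{\Om_\e}|\nabla Q^a_\e|^2$.
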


\begin{proof}
Let $i\in \Z^2$ be such that $\e \{i+W\} \subset \Om$ and fix $x$ in the interior of such a cube. By construction, up to a null set the gradient $\nabla Q^a_\e$ is constant on the segment joining $x$ and $\e i$, the only possible exception being  when $x-\e i$ is parallel to one of the coordinate axes, or to $e_1-e_2$. By this and the mean value theorem we then have that 
$$
|Q_\e(x)-Q^a_\e(x)|=|Q^a_\e(\e i)-Q^a_\e(x)|\le|x-\e i||\nabla Q^a_\e(x)|
$$
for a.e.\ $x \in \e \{i+W\}$. Therefore
$$
|Q_\e(x)-Q^a_\e(x)|^2\le\frac{1}{2}\e^2 |\nabla Q^a_\e(x)|^2
$$
for a.e.\ $x \in \e \{i+W\}$. Summing over all such cubes, we get \eqref{interpol}.

Since $Q^a_\e$ are uniformly bounded in $L^\infty$, if $\int_{\Om_\e} |\nabla Q^a_\e(x)|^2\,dx$ is uniformly bounded, the sequence $Q^a_\e$ is $L^2$-compact by the Rellich Theorem and the equiintegrability given by the uniform bound $|Q^a_\e(x)|\le 1$. So is then $Q_\e$ by \eqref{interpol}, and it has the same limit points. Each limit point belongs then to $L^2(\Om;\partial K)$ since this set is closed with respect to strong $L^2$ convergence. By the boundedness of $\int_\Om |\nabla Q^a_\e(x)|^2\,dx$, we also infer that actually each limit point $Q$ of $Q_\e$ belongs to $W^{1,2}(\Om;\partial K)$.
\end{proof}

We are now in a position to give the proof of Theorem \ref{gradterm}.
\begin{proof}[{\it Proof of Theorem {\rm\ref{gradterm}:}}]
(a) We need to prove the inequality only for $Q\in W^{1,2}(\Omega; \partial K)$. Let $R>0$ be such that $\Om\subset RW$ and let $Q\in W^{1,2}(RW;\partial K)$ denote the (not renamed) extension of any $Q\in W^{1,2}(\Om;\partial K)$ which exists thanks to the regularity of $\Om$. As observed in Remark \ref{remark:approx}, the space of matrices of the type $Q(x)=u(x)\otimes u(x)$ with $u \in C^\infty(RW; S^1)$ is dense in $W^{1,2}(RW;\partial K)$. Thus, it suffices to prove the upper-bound inequality only for such $Q$. 
For every $\e$, let now $u_\e(x)$ be the piecewise-affine approximation of $u(x)$ such that $u_\e(\e i)=u(\e i)$ for every $i \in \Z_\e(RW)$, and set $Q^a_\e(x)=u_\e(x)\otimes u_\e(x)$. We have
\begin{equation}\label{conv}
F^1(Q)=\lim_{\e \to 0}\frac{|h'(1)|}{2}\int_{\Om +B(0,\e)} |\nabla Q^a_\e(x)|^2\,dx\,.
\end{equation}

By the regularity of $u$, the functions $u_\e$ are bounded in $W^{1,\infty}$, which means that there exists a constant $M$ independent of $\e$ such that
$$
|u_\e(\e i)-u_\e(\e j)|\le 2M \e
$$
for every $i,j \in \Z_\e(RW)$ with $|i-j|=1$. This implies, by a direct computation, that
\begin{equation}\label{bdd}
|u_\e(\e i)\cdot u_\e(\e j)| \ge 1-M \e^2.
\end{equation}
In particular, when $\e$ is small enough, $|u_\e(\e i)\cdot u_\e(\e j)|$ belongs to the interval $[1-\delta, 1]$ where $h$ is $C^2$.

Using \eqref{prodotti} and \eqref{modul} we have
\begin{eqnarray}\label{fine}
\int_{\Om+B(0,\e)} \!\!|\nabla Q^a_\e(x)|^2\,dx&\geq& \nonumber \!\!\!\sum_{\substack{i,j \in \Z_\e(\Om)\\|i-j|=1}}{\frac 12 \e^2 \Big|\frac{u_\e(\e i)\otimes u_\e(\e i)-u_\e(\e j)\otimes u_\e(\e j)}{\e}\Big|^2}\\&=&\!\!\!\sum_{\substack{i,j \in \Z_\e(\Om)\\|i-j|=1}}{(1-|u_\e(\e i)\cdot u_\e(\e j)|^2)},
\end{eqnarray}
where we have taken into account that every triangular cell has measure $\e^2/2$ and that every interaction between $i$ and $j$ belonging to $\Z_\e(\Om)$ appears with the same factor two, both in the sum (since it is not ordered) and in the integral. 
Observe that by \eqref{bdd} one has
\begin{eqnarray}\label{estim}\nonumber
1-|u_\e(\e i)\cdot u_\e(\e j)|\le M\e^2,\quad 1-|u_\e(\e i)\cdot u_\e(\e j)|^2\le 2M\e^2,\\
\\
\nonumber \frac12\ge \frac{1}{1+|u_\e(\e i)\cdot u_\e(\e j)|}-\frac M2 \e^2,
\end{eqnarray}
so that inserting these inequalities in the previous estimate we arrive to
\begin{eqnarray}\label{inew}\nonumber
\int_{\Om+B(0,\e)} \!\!|\nabla Q^a_\e(x)|^2\,dx&\ge& 2\sum_{\substack{i,j \in \Z_\e(\Om)\\|i-j|=1}}{\frac {1-|u_\e(\e i)\cdot u_\e(\e j)|^2}{1+|u_\e(\e i)\cdot u_\e(\e j)|}}-M \e^2\sum_{\substack{i,j \in \Z_\e(\Om)\\|i-j|=1}}{(1-|u_\e(\e i)\cdot u_\e(\e j)|^2)}\\
\\ \nonumber
&\ge&2\sum_{\substack{i,j \in \Z_\e(\Om)\\|i-j|=1}}{(1-|u_\e(\e i)\cdot u_\e(\e j)|)}-4M^2\e^2|\Om|.
\end{eqnarray}

If now $C$ denotes an upper bound for $h''/2$ in $[1-\delta, 1]$, we have
$$
h(x)-h(1)\le |h'(1)|(1-x)+C (1-x)^2
$$
for every $x \in [1-\delta, 1]$. Multiplying \eqref{inew} by $|h'(1)|$ and inserting this last inequality, we get
\begin{eqnarray*}
|h'(1)|\int_{\Om+B(0,\e)} \!\!|\nabla Q^a_\e(x)|^2\,dx
&\ge& 2\sum_{\substack{i,j \in \Z_\e(\Om)\\ |i-j|=1}} (h(|u_\e(\e i)\cdot u_\e(\e j)|)-h(1))
\\
&&-8|h'(1)|M^2 \e^2 |\Om|-2\ C \sum_{\substack{i,j \in \Z_\e(\Om)\\|i-j|=1}} (1-|u_\e(\e i)\cdot u_\e(\e j)|)^2,
\end{eqnarray*}
whence, using \eqref{estim}, we deduce that there is a positive constant $L$ independent of $\e$ such that 
$$
|h'(1)|\int_{\Om+B(0,\e)} \!\!|\nabla Q^a_\e(x)|^2\,dx\ge 2\sum_{\substack{i,j \in \Z_\e(\Om)\\|i-j|=1}}{(h(|u_\e(\e i)\cdot u_\e(\e j)|)-h(1))}-L\e^2\,.
$$
Defining now $Q_\e$ the piecewise-constant interpolation of $Q(u_\e)$, by Lemma \ref{Lemma_strong_conv}, we have that $Q_\e\to Q$ strongly in $L^2(\Om;\partial K)$. We may rewrite the previous inequality as
$$
\frac{|h'(1)|}{2}\int_{\Om+B(0,\e)} \!\!|\nabla Q^a_\e(x)|^2\,dx\ge F^1_\e(Q_\e)-L\e^2\,.
$$
Taking the $\limsup$ as $\e\to 0$, by \eqref{conv} we deduce the upper-bound inequality.

(b) We denote with $(F^1)'$ the $\Gamma$-liminf of $F^1$. We want to show the $\Gamma$-liminf inequality $(F^1)'\ge F^1$. Let $Q_\e \in C_\e(\Om;K)$ be the sequence of piecewise-constant functions associated to $Q(u_\e)$, for some $u_\e\colon \e\Z^2 \cap \Om \to S^{1}$ and let $Q^a_\e$ be the piecewise-affine interpolations satisfying $Q^a_\e(\e i)=u_\e(\e i)\otimes u_\e(\e i)$ for every $i \in \Z_\e(\Om)$. We claim that it suffices to prove
\begin{equation}\label{claim_0}
\sum_{\substack{i,j \in \Z_\e(\Om)\\|i-j|=1}}{[h(|u_\e(\e i)\cdot u_\e(\e j)|)-h(1)]}\ge \frac{\gamma}{2} \int_{\Om_\e} |\nabla Q^a_\e(x)|^2\,dx \,.
\end{equation}
Indeed by \eqref{energy1storder}, \eqref{claim_0} is equivalent to 
$$
F^1_\e(Q_\e)\ge \frac{\gamma}{2}\int_{\Om_\e} |\nabla Q^a_\e(x)|^2\,dx \,;
$$
therefore, if the left-hand side keeps bounded, by Lemma \ref{Lemma_strong_conv} we have that $Q_\e-Q^a_\e \to 0$ strongly in $L^2(\Om;K)$ and each limit point of $Q_\e$ must belong to $W^{1,2}(\Om;\partial K)$. Furthermore, the inequality $(F^1)'(Q)\ge F^1(Q)$ for $Q\in W^{1,2}(\Om;\partial K)$ follows from \eqref{claim_0} by semicontinuity.

Now, since \eqref{hypo} gives
$$
\sum_{\substack{i,j \in \Z_\e(\Om)\\|i-j|=1}}{[h(|u_\e(\e i)\cdot u_\e(\e j)|)-h(1)]}\ge \frac{\gamma}{2} \sum_{\substack{i,j \in \Z_\e(\Om)\\|i-j|=1}}{ (1-|u_\e(\e i)\cdot u_\e(\e j)|^2)}
$$
\eqref{claim_0} immediately follows from the inequality 
\begin{eqnarray*}
\sum_{\substack{i,j \in \Z_\e(\Om)\\|i-j|=1}}{ (1-|u_\e(\e i)\cdot u_\e(\e j)|^2)}\geq\int_{\Om_\e} |\nabla Q^a_\e(x)|^2\,dx \,,
\end{eqnarray*}
that can be obtained from 
\begin{eqnarray*}
\int_{\Om_\e} \!\!|\nabla Q^a_\e(x)|^2\,dx&\leq& \!\!\!\sum_{\substack{i,j \in \Z_\e(\Om)\\|i-j|=1}}{\frac 12 \e^2 \Big|\frac{u_\e(\e i)\otimes u_\e(\e i)-u_\e(\e j)\otimes u_\e(\e j)}{\e}\Big|^2}\\&=&\!\!\!\sum_{\substack{i,j \in \Z_\e(\Om)\\|i-j|=1}}{(1-|u_\e(\e i)\cdot u_\e(\e j)|^2)}
\end{eqnarray*}
which holds by construction of $Q_\e^a$.
Finally, (c) is an obvious consequence of (a) and (b).
\end{proof}

\medskip
At the end of this section we give an example of an energy of the type \eqref{Eeps} such that $\inf h=h(1)=0$ and that $h'(1)=0$. In this case the assumptions of Theorem \ref{gradterm} are not satisfied and indeed we can show that the domain of the $\Gamma$-$\limsup$ of the $\e^2$-scaled energy \eqref{energy1storder} is strictly larger than $W^{1,2}(\Om;\partial K)$.  

\begin{example}\rm Let $h(x)=(1-x)^2$ so that $F_\e^1$ takes the form:
\begin{eqnarray}\label{energy1storder-example}
F^1_\e(Q)=
\begin{cases}\displaystyle
\sum_{|i-j|=1}(1-|(u_i\cdot u_j)|)^2&\text{if $ Q\in C_\e(\Om;K)$}\\
+\infty&\text{otherwise}.
\end{cases}
\end{eqnarray}
Consider $Q(x)=\frac{x}{|x|}\otimes \frac{x}{|x|}$. Note that $Q\notin W^{1,2}(\Om;\partial K)$ (while $Q\in W^{1,p}(\Om;\partial K)$ for all $1\leq p<2$). Setting $u(x)=\frac{x}{|x|}$ we have that $Q(x)=Q(u(x))$. To show that $\Gamma\hbox{-}\limsup_\e F_\e^1(Q)<+\infty$, it suffices to construct a sequence of unitary vector fields $u_\e\to u$ in $L^1$ (this implies that $Q_\e\to Q$ in $L^1$) such that $\limsup_\e\frac{E_\e(u_\e)}{\e^2}<+\infty$. We give an arbitrary value $u_0 \in S^1$ to the function $u$ in the origin, and define $u_\e$ as the piecewise-affine interpolation of $u(x)$. As usual, we construct this interpolation using as elements triangles with integer vertices and longest side parallel to $e_1-e_2$. We have that $u_\e\to u$ in $L^1$. 
We first observe that we have
$$
\frac{1}{\e^2}E_\e(u_\e)\le 8+\sum_{\substack{i,j \in \Z_\e(\Om)\setminus\{(0,0)\}\\|i-j|=1}}(1-|(u_i\cdot u_j)|)^2\,;
$$
this is obtained by estimating with $1$ all the $4$ interactions (each appearing twice) between $0$ and its nearest neighbors.
Taking into account that for all $i,\ j\in \Z_\e(\Om)\setminus\{(0,0)\}$, $|i-j|=1$ we have that $(u_\e(\e i)\cdot u_\e(\e j))=|(u_\e(\e i)\cdot u_\e(\e j))|$ we may write 
\begin{equation*}
\frac{1}{\e^2}E_\e(u_\e)\le 8+\sum_{\substack{i,j \in \Z_\e(\Om)\setminus\{(0,0)\}\\|i-j|=1}}(1-(u_i\cdot u_j))^2.
\end{equation*}
Observing that
$$
(1-(u_i\cdot u_j))^2=\frac 14|u_i-u_j|^4
$$
we are only left to show that
\begin{equation}\label{example:claim}
\limsup_\e\,\frac 14\sum_{\substack{i,j \in \Z_\e(\Om)\setminus\{(0,0)\}\\|i-j|=1}}|u_\e(\e i)-u_\e (\e j)|^4 <+\infty\,.
\end{equation}

In order to prove the claim, we make use of the following simple inequality (whose proof is omitted): for all $x$ und $y\in \R^2$, with $|x|\ge \e$, $|y|\ge \e$ and $|y-x|\le \sqrt{2}{\e}$ one has
\begin{equation}\label{ex:ineq}
\frac{1}{|y|^4}\le \frac{(\sqrt{2}+1)^4}{|x|^4}\,.
\end{equation}
Consider now a triangle $T$ of the interpolation grid, satisfying $T\subset \subset \R^2\setminus 2\e W$: in this way, however taken $x$ and $y \in T$, \eqref{ex:ineq} is satisfied. Let $\e i_1$, $\e i_2$, and $\e i_3 \in \Z_\e(\Om)\setminus\{(0,0)\}$ be the vertices of $T$, ordered in a way that $|i_1-i_2|=|i_1-i_3|=1$. By the Mean Value Theorem, one gets the existence of two points $v_{12}$ and $v_{13}$ both belonging to $T$, such that
$$
|u_\e (\e i_1)- u_\e (\e i_2)|^4=|u(\e i_1)- u (\e i_2)|^4=\e^4 |\nabla u(v_{12})|^4=\frac{\e^4}{|v_{12}|^4}
$$
and
$$
|u_\e (\e i_1)- u_\e (\e i_3)|^4=\frac{\e^4}{|v_{13}|^4}\,.
$$
By this, using \eqref{ex:ineq}, we obtain
\begin{eqnarray*}
\e^2\int_{T}|\nabla u(x)|^4\,dx&=&\e^2\int_{T}\frac{1}{|x|^4}\,dx =\frac{\e^2}{2 (\sqrt{2}+1)^4} \int_{T}\Big(\frac{1}{|v_{12}|^4}+\frac{1}{|v_{13}|^4}\Big)\,dx\\&=&
\frac{1}{4 (\sqrt{2}+1)^4} \Big(\frac{\e^4}{|v_{12}|^4}+\frac{\e^4}{|v_{13}|^4}\Big)\\
&=&\frac{1}{4 (\sqrt{2}+1)^4} \Big(|u_\e (\e i_1)- u_\e (\e i_2)|^4+|u_\e (\e i_1)- u_\e (\e i_3)|^4\Big)\,.
\end{eqnarray*}
Summing over all triangles $T$ having vertices in $\Z_\e(\Om)\setminus\{(0,0)\}$ we get
\begin{equation}\label{stima:esempio}
8+(\sqrt{2}+1)^4\e^2\int_{(\Om+B(0,\e))\setminus 2\e W}|\nabla u(x)|^4\,dx \ge \frac 14\sum_{\substack{i,j \in \Z_\e(\Om)\setminus\{(0,0)\}\\|i-j|=1}}|u_\e(\e i)-u_\e (\e j)|^4\,.
\end{equation}
Indeed, every element of the sum appears twice in both sides, since there are two triangles of the interpolation grid having $i$ and $j$ as vertices, with the exception of the $8$ pairs of points on the boundary of $2 \e W$. These are counted only once in the left-hand side, since in this case one of the triangles is contained $2 \e W$. The corresponding element of the sum is simply estimated by $1$.

Now, taking $M>0$ such that $\Om+B(0,\e)\subset B(0, M)$ we have, passing to polar coordinates
\begin{equation*}
\e^2\int_{(\Om+B(0,\e))\setminus 2\e W}|\nabla u(x)|^4\,dx \le \e^2\int_\e^M \frac{1}{r^3}\,dr=\frac 12-\frac{\e^2}{2M^2} \to \frac12
\end{equation*}
as $\e \to 0$. By this, and \eqref{stima:esempio}, \eqref{example:claim} immediately follows.
\end{example}

\subsection{Sobolev scaling - Selection of oscillating states}
In this paragraph we give an example of gradient-type energy finite on non-uniform states as a result of the competition between nearest and next-to-nearest neighbor energies. Such a competition will affect the continuum limit only in the gradient-type scaling leaving the bulk limit unchanged. 
We consider a bounded Borel function $h:[0,1]\to\R$ having a strict absolute minimum $s\in(0,1)$: without loss of generality we may assume that $h(s)=0$.  We now define the following family of energies
\begin{eqnarray*}
E_\e(u)=\sum_{|i-j|=1}\e^2h(|(u_i\cdot u_j)|)+\sum_{|i-j|=\sqrt{2}}\e^2(1-(u_i\cdot u_j)^2).
\end{eqnarray*} 
With similar arguments as in Theorem \ref{theorem:nn} one straightforwardly has that the $\Gamma$-limit of $E_\e(u)$ is given by 
$$
F(Q):=\begin{cases}
         \displaystyle
         4\int_{\Om} h^{++}(\sqrt2 |Q(x)-\tfrac12 I|)\,dx\quad\hbox{if }Q \in L^\infty(\Om; K)\\
         +\infty \quad\hbox{otherwise}
         \end{cases}
$$
where $K$ is defined by \eqref{convex} and $h^{++} \colon[0,1]\to \R$ as in Definition \ref{h+++}. As already observed in Remark \ref{r: h++}(i), since $h^{++}(t)= 0$ for all $t\in[0,s]$, $F$ provides little information on the set of the ground-state configurations. However, the effect of the second term in the energy will be evident in the next-order $\Gamma$-limit, as shown below. Let $F_\e^1:L^{\infty}(\Om;\Mdue)\to[0,+\infty]$ be given by
\begin{eqnarray}\label{F1ennn}
F^1_\e(Q)=
\begin{cases}\displaystyle
\sum_{|i-j|=1}h(|(u_i\cdot u_j)|)+\sum_{|i-j|=\sqrt{2}}(1-(u_i\cdot u_j)^2)&\text{if $ Q\in C_\e(\Om;K)$}\\
+\infty&\text{otherwise}.
\end{cases}
\end{eqnarray}
where we have implicitly used the usual identification $Q(\e i)=Q(u_i)$.

\begin{theorem}\label{thm:oscillating}
Let $F^1_\e$ be as in \eqref{F1ennn} and let $F^1\colon L^\infty(\Om;\Mdue)\to \R \cup\{\infty\}$ be defined as
\begin{eqnarray}\label{Gamma-lim_1}
F^1(Q):=\begin{cases}
         \displaystyle
         \frac{2}{s^2}\int_{\Om} |\nabla Q(x)|^2\,dx\quad\hbox{if }Q \in W^{1,2}(\Om; \partial K_s)\\
         +\infty \quad\hbox{otherwise}
         \end{cases}
\end{eqnarray}         
with $\partial K_s$ as in \eqref{partialK_s}.
Then we have that
\begin{eqnarray}
\Gamma\hbox{-}\liminf_\e F^1_\e(Q)\geq F^1(Q),
\end{eqnarray}
with respect to the weak$^*$ topology of $L^{\infty}(\Om;\Mdue)$.
If in addition there exist $\delta>0$, $C>0$ and $p>1$ such that 
\begin{equation}\label{thm:hregular}
h(x)\leq C|x-s|^{2p}\quad \forall x\in (s-\delta,s+\delta),
\end{equation}
then
\begin{eqnarray}
\Gamma\hbox{-}\lim_\e F^1_\e(Q)=F^1(Q).
\end{eqnarray}\end{theorem}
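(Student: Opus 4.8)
The plan is to exploit the fact that the diagonal (next-to-nearest) bonds of the square lattice split into two decoupled sublattices. Writing $u_i\otimes u_i=Q(u_i)$, each bond with $|i-j|=\sqrt2$ joins two sites of the same sublattice (those with $i_1+i_2$ even, resp. odd), and on such a bond $1-(u_i\cdot u_j)^2=\frac12|Q(u_i)-Q(u_j)|^2$ by \eqref{prodotti}--\eqref{modul}. Hence the second sum in \eqref{F1ennn} is exactly the sum of two discrete Dirichlet energies, one for the piecewise-constant field $Q^{\rm e}_\e$ carried by the even sublattice and one for $Q^{\rm o}_\e$ carried by the odd one, each taking values in $\partial K=\{|Q|=1\}$. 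The first sum, instead, couples the two sublattices: since $h\ge0$ with the strict minimum $h(s)=0$, it penalises deviations of $|u_i\cdot u_j|$ from $s$ and will force the two sublattice limits to sit at the fixed relative angle $|v\cdot w|=s$, i.e. the macroscopic $Q=\frac12(v\otimes v+w\otimes w)$ to lie on $\partial K_s$ by \eqref{identity}. The arithmetic core of the statement is the identity
$$|\nabla(v\otimes v)|^2+|\nabla(w\otimes w)|^2=\frac{2}{s^2}|\nabla Q|^2,$$
valid a.e. whenever $Q\in W^{1,2}(\Om;\partial K_s)$ is decomposed as in \eqref{formula_magica_2}: writing $Q$ through its orientation $n$ as in Remark \ref{remark:approx}, the vectors $v,w$ of \eqref{decomposizione} are the fixed rotations $R_{\pm\psi}n$ with $\cos\psi=\sqrt{(1+s)/2}$, so $|\nabla v|=|\nabla w|=|\nabla n|$ and \eqref{fattoregiusto}, \eqref{fattoregiusto2} give the claim.

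For the lower bound I would start from a sequence $Q_\e\wtos Q$ with $\sup_\e F^1_\e(Q_\e)<+\infty$. Boundedness of the diagonal sum bounds the Dirichlet energies of the piecewise-affine interpolations of $Q^{\rm e}_\e$ and $Q^{\rm o}_\e$ on the two sublattices (triangulated as in Lemma \ref{Lemma_strong_conv}); by Rellich, up to subsequences these converge strongly in $L^2$ and weakly in $W^{1,2}$ to fields $v\otimes v,\,w\otimes w\in W^{1,2}(\Om;\partial K)$, and averaging the two oscillating sublattices identifies the weak-$*$ limit as $Q=\frac12(v\otimes v+w\otimes w)$. Next, boundedness of the first sum forces $|u_i\cdot u_j|\to s$ for the even--odd bonds, whence, using the a.e.\ convergence of the sublattice fields and the strict minimality of $s$, one gets $|v\cdot w|=s$ a.e.; thus a finite liminf forces $Q\in W^{1,2}(\Om;\partial K_s)$. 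Finally, discarding the nonnegative first sum and using weak-$W^{1,2}$ lower semicontinuity of the Dirichlet integral together with the identity above yields
$$\liminf_\e F^1_\e(Q_\e)\ \ge\ \int_\Om\bigl(|\nabla(v\otimes v)|^2+|\nabla(w\otimes w)|^2\bigr)\,dx=\frac{2}{s^2}\int_\Om|\nabla Q|^2\,dx .$$

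For the upper bound, assuming \eqref{thm:hregular}, I would first reduce, by the density recalled in Remark \ref{remark:approx} (Theorem 3 of \cite{BetZhe}), to $Q\in C^\infty(\Om;\partial K_s)\cap W^{1,2}$, so that the maps $v,w$ of \eqref{decomposizione} are smooth and $S^1$-valued; orient the decomposition so that $v\cdot w=s$. The recovery sequence is the oscillating field $u_\e$ equal to $v$ on the even sites and to $w$ on the odd sites of $\e\Z_\e(\Om)$; then $Q(u_\e)$ oscillates between $v\otimes v$ and $w\otimes w$ and, averaging, $Q(u_\e)\wtos Q$. On the diagonal bonds the two sublattice fields are smooth, so by \eqref{prodotti}--\eqref{modul} the second sum converges to $\int_\Om(|\nabla(v\otimes v)|^2+|\nabla(w\otimes w)|^2)\,dx=\frac{2}{s^2}\int_\Om|\nabla Q|^2$. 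On the even--odd bonds $v(\e\alpha)\cdot w(\e\alpha+\e e_k)=v(\e\alpha)\cdot w(\e\alpha)+O(\e)=s+O(\e)$ by smoothness, so $\bigl||u_i\cdot u_j|-s\bigr|\le C\e$ and by \eqref{thm:hregular} each term is $O(\e^{2p})$; summing over the $O(\e^{-2})$ nearest-neighbor bonds gives $\sum_{|i-j|=1}h(|u_i\cdot u_j|)\le C\e^{2p-2}\to0$ precisely because $p>1$. Hence $\limsup_\e F^1_\e(u_\e)\le\frac{2}{s^2}\int_\Om|\nabla Q|^2$, and a standard diagonal/density argument (strong $W^{1,2}$ density and weak-$*$ lower semicontinuity of the $\Gamma$-$\limsup$) extends the bound to all $Q\in W^{1,2}(\Om;\partial K_s)$.

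The main obstacle I expect is in the lower bound: one must simultaneously (i) keep the two sublattices decoupled and verify that the discrete diagonal sums converge, with constant exactly $1$, to the continuum Dirichlet integrals of the two sublattice limits, and (ii) upgrade the merely averaged control $|u_i\cdot u_j|\to s$ coming from the nonnegative penalty $h$ into the pointwise constraint $|v\cdot w|=s$ a.e.; this last step is where the strict minimality of $s$ (and, if $h$ is not already lower semicontinuous, a careful truncation/Fatou argument on the nearest-neighbor sum) is essential. Tracking the precise constant $\frac{2}{s^2}$, which hinges on the rotation identity of the first paragraph, is the other delicate bookkeeping point.
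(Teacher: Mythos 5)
Your proposal is correct and follows essentially the same route as the paper: the even/odd sublattice decomposition of the diagonal bonds, compactness of the two piecewise-affine sublattice interpolations, orientability of the $\partial K$-valued limits via Remark \ref{remark:approx}, the identity $|\nabla(v\otimes v)|^2+|\nabla(w\otimes w)|^2=\frac{2}{s^2}|\nabla Q|^2$ (the paper's Lemma \ref{lemmaA}), and the oscillating recovery sequence with the $O(\e^{2p-2})$ estimate on the nearest-neighbor term. The only cosmetic difference is that the paper derives the constraint $Q\in\partial K_s$ by passing the bound $\e^{-2}\int h(\sqrt2\,|Q'_\e-\tfrac12 I|)\,dx\le C$ to the strong $L^2$ limit of the dual interpolation, rather than arguing bond-by-bond as you sketch.
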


Before the proof we first state two lemmata.
\begin{lemma}\label{lemmaA}
Let $Q\in W^{1,2}(\Om;\partial K_s)$ and assume that there exist $\widehat v,\widehat w \in W^{1,2}(\Om;S^1)$ such that \eqref{formula_magica_2} holds true. Then 
\begin{eqnarray*}
\frac{1}{s^2}\int_\Om|\nabla Q(x)|^2\ dx=\int_\Om|\nabla \widehat v(x)|^2\ dx+\int_\Om|\nabla \widehat w(x)|^2\ dx.
\end{eqnarray*}
\end{lemma}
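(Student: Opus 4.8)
The plan is to reduce everything to the orientation $n$ furnished by Remark \ref{remark:approx} and to exploit the rigidity of the $\frac\pi2$-rotation. By \eqref{lifting} any $Q\in W^{1,2}(\Om;\partial K_s)$ is orientable, so there are $n\in W^{1,2}(\Om;S^1)$ and the \emph{canonical} pair $v,w\in W^{1,2}(\Om;S^1)$ of \eqref{decomposizione} realizing \eqref{formula_magica_2}. The identity \eqref{fattoregiusto2} already gives $|\nabla Q|^2=2s^2|\nabla n|^2$ a.e., so the whole statement reduces to the pointwise claim $|\nabla v|^2=|\nabla w|^2=|\nabla n|^2$ for this canonical pair, followed by a transfer of the result to the arbitrary pair $\widehat v,\widehat w$ in the hypothesis.

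For the canonical pair I would argue as follows. Writing $a=\sqrt{\tfrac{1+s}2}$ and $b=\sqrt{\tfrac{1-s}2}$ (constants, since $s$ is fixed, with $a^2+b^2=1$), formula \eqref{decomposizione} reads $v=an+bn^\perp$ and $w=an-bn^\perp$, where $n^\perp=Rn$ and $R$ is the fixed counterclockwise rotation by $\frac\pi2$. Differentiating componentwise, $\partial_i n^\perp=R\,\partial_i n$; since $R$ is orthogonal one has $|R\,\partial_i n|=|\partial_i n|$, and since $R$ is antisymmetric one has $\partial_i n\cdot R\,\partial_i n=0$. Expanding $|\partial_i v|^2=a^2|\partial_i n|^2+b^2|R\,\partial_i n|^2+2ab\,\partial_i n\cdot R\,\partial_i n$, the cross term vanishes and the remaining terms collapse to $(a^2+b^2)|\partial_i n|^2=|\partial_i n|^2$; the same holds for $w$. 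Summing over $i$ gives $|\nabla v|^2=|\nabla w|^2=|\nabla n|^2$ a.e., whence $\int_\Om|\nabla v|^2+\int_\Om|\nabla w|^2=2\int_\Om|\nabla n|^2=\frac1{s^2}\int_\Om|\nabla Q|^2$.

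It remains to pass from the canonical pair to the arbitrary admissible pair $\widehat v,\widehat w$. Here the key observation is that, by \eqref{fattoregiusto} (extended from $C^1$ to $W^{1,2}(\Om;S^1)$ by the same density argument used in Remark \ref{remark:approx}), one has $|\nabla\widehat v|^2=\frac12|\nabla(\widehat v\otimes\widehat v)|^2$ a.e., so that $|\nabla\widehat v|^2$ depends only on the tensor field $\widehat v\otimes\widehat v$, and similarly for $\widehat w$. Since $Q\in\partial K_s$ with $s\in(0,1]$ forces $Q(x)\neq\frac12 I$ at every point (by \eqref{KN2}), Proposition \ref{formula_magica} guarantees that for a.e.\ $x$ the unordered pair $\{\widehat v\otimes\widehat v,\widehat w\otimes\widehat w\}(x)$ coincides with $\{v\otimes v,w\otimes w\}(x)$. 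Decomposing $\Om$ into the two measurable sets on which these pairs agree in each of the two possible orders, and using the elementary fact that two Sobolev maps coinciding on a measurable set have a.e.\ equal gradients there, I conclude $|\nabla\widehat v|^2+|\nabla\widehat w|^2=|\nabla v|^2+|\nabla w|^2$ a.e.; integrating and invoking the previous paragraph finishes the proof.

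I expect the main obstacle to be precisely this last transfer step: the order of the two factors $\widehat v\otimes\widehat v$ and $\widehat w\otimes\widehat w$ relative to the canonical $v\otimes v$ and $w\otimes w$ may switch on a genuinely measurable (not open) subset of $\Om$, so one must argue at the level of the $Q$-tensor fields rather than the vector fields and lean on the locality of weak gradients on level sets. An alternative, fully computational route avoids the reduction by expanding $4|\nabla Q|^2=|\nabla(\widehat v\otimes\widehat v)|^2+|\nabla(\widehat w\otimes\widehat w)|^2+2\,\nabla(\widehat v\otimes\widehat v):\nabla(\widehat w\otimes\widehat w)$ directly for the given pair; there the analogous difficulty resurfaces as the need to show, via \eqref{identity}, that $\widehat v\cdot\widehat w\equiv\pm s$ is a.e.\ constant and that differentiating this constraint forces the two orthonormal frames to rotate at the same rate, which is exactly what makes the cross term combine with the diagonal terms to produce the factor $2s^2$.
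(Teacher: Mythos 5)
Your proof is correct and follows essentially the same route as the paper's: reduce to the canonical pair $v,w$ of \eqref{decomposizione}, compute $|\nabla v|^2+|\nabla w|^2=2|\nabla n|^2$ (the paper only needs the sum, where the cross terms cancel, rather than your slightly stronger pointwise identity via antisymmetry of the rotation), relate this to $|\nabla Q|^2$ through \eqref{fattoregiusto2}, and transfer to the arbitrary pair $\widehat v,\widehat w$ using the uniqueness in Proposition \ref{formula_magica}. Your careful handling of the transfer step (the measurable decomposition of $\Om$ according to the ordering of the tensors and the locality of weak gradients) fills in a detail the paper's one-line assertion of \eqref{lemma:gradienti} leaves implicit, but it is the same argument in substance.
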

\begin{proof}
Take $v,\ w$ as in \eqref{decomposizione}. Then $v,\ w$ also satisfy \eqref{formula_magica_2}. By Proposition \ref{formula_magica} we have that for almost every $x\in\Om$ $\widehat v\otimes \widehat v=v\otimes v$ and $\widehat w\otimes\widehat w=w \otimes w$ or the converse, which implies that 
\begin{eqnarray}\label{lemma:gradienti}
\int_\Om|\nabla \widehat v(x)|^2\ dx+\int_\Om|\nabla \widehat w(x)|^2\ dx=\int_\Om|\nabla v(x)|^2\ dx+\int_\Om|\nabla  w(x)|^2\ dx.
\end{eqnarray}
An explicit computation yelds
\begin{equation}
|\nabla v(x)|^2+|\nabla w(x)|^2=(1+s)|\nabla n(x)|^2+(1-s)|\nabla n^\perp(x)|^2=2|\nabla n(x)|^2.
\end{equation}
On the other hand by \eqref{lifting} and \eqref{fattoregiusto2} we have that 
\begin{eqnarray}
|\nabla Q(x)|^2=s^2|\nabla(n(x)\otimes n(x))|^2=2 s^2|\nabla n(x)|^2= s^2(|\nabla v(x)|^2+|\nabla w(x)|^2)
\end{eqnarray}
hence the conclusion.
\end{proof}

In the statement of the following lemma, as well as in what follows, we use the terminology that a point $i=(i_1,i_2)\in\Z^2$ is called even if $i_1+i_2$ is even, and odd otherwise.

\begin{lemma}\label{lemma:strong_conv2}
Let $\Om$ be an open subset of $\R^2$. Denote by $\widehat W$ the cube obtained by rotating $\sqrt{2}W$ by $\pi/4$. Given a sequence of functions $Q_\e\in C_\e(\Om;K)$, let $Q^{odd}_\e(x)$ and $Q^{even}_\e(x)$ be the odd and the even piecewise-constant interpolations of $Q_\e$ on the cells $\e\{i+\widehat W\}$ for $i$ odd, and $i$ even, respectively. Let $Q^{odd,a}_\e$ be the odd piecewise-affine interpolation of $Q_\e$ having constant gradient on triangles with odd vertices and longest side parallel to $e_1$. Similarly define the even piecewise-affine interpolation $Q^{even,a}_\e$. Then, for all relatively compact $\Om' \subset \subset \Om$
\begin{equation}\label{interpol+}
\int_{\Om'}|Q^{odd}_\e(x)-Q^{odd,a}_\e(x)|^2\,dx \le \e^2 \int_{\Om'+B(0, \sqrt{2}\e)} |\nabla Q^{odd,a}_\e(x)|^2\,dx 
\end{equation}
In particular, if $\int_{\Om'+B(0, \sqrt{2}\e)}|\nabla Q^{odd,a}_\e(x)|^2\,dx$ is bounded uniformly with respect to $\e$ and $\Om'$, then $Q^{odd,a}_\e- Q^{odd}_\e$ converges to $0$ in $L^2(\Om; \Mdue)$ as $\e \to 0$, and each limit point $Q^{odd}$ of $Q^{odd,a}_\e$ belongs to $W^{1,2}(\Om;\partial K)$. The same statement holds with $Q^{even,a}_\e$ and $Q^{even}_\e$ in place of $Q^{odd,a}_\e$, and $Q^{odd}_\e$, respectively.
\end{lemma}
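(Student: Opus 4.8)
The plan is to reproduce the scheme of Lemma~\ref{Lemma_strong_conv}; the only genuinely new point is a geometric fact about the triangulation underlying $Q^{odd,a}_\e$. First I would record that $\widehat W$ is the closed diamond with vertices $\pm e_1,\pm e_2$, and that the translates $\e\{i+\widehat W\}$ with $i$ odd tile $\R^2$: indeed the odd points form a square sublattice of spacing $\sqrt2$, and these diamonds are exactly its (rescaled) fundamental cells. Each such diamond is centred at an odd point $\e i$, and its four vertices lie at distance $\e$ from $\e i$, so that $|x-\e i|\le\e$ for every $x\in\e\{i+\widehat W\}$.

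The key observation I would establish is that, for every odd $i$, each of the six edges of the triangulation defining $Q^{odd,a}_\e$ that are incident to $\e i$ is a segment emanating from $\e i$, and that these edges partition the diamond $\e\{i+\widehat W\}$ into sectors, each contained in a single triangle incident to $\e i$. It suffices to check this for one odd vertex and to invoke translation invariance: listing the six neighbours of $\e i$ in the triangulation graph (the four $\sqrt2$-neighbours together with the two endpoints of the horizontal longest edges), one sees that the six rays towards them split the diamond precisely into the six incident triangular sectors. The consequence I need is that for a.e.\ $x\in\e\{i+\widehat W\}$ the whole segment $[\e i,x]$ lies in a single triangle, on which $\nabla Q^{odd,a}_\e$ is constant and equal to $\nabla Q^{odd,a}_\e(x)$.

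Granted this, the estimate follows as in Lemma~\ref{Lemma_strong_conv}. Since $Q^{odd}_\e(x)=Q^{odd,a}_\e(\e i)$ on the diamond and $\nabla Q^{odd,a}_\e$ is constant along $[\e i,x]$, the mean value theorem gives, for a.e.\ $x$,
\[
|Q^{odd}_\e(x)-Q^{odd,a}_\e(x)|=|Q^{odd,a}_\e(\e i)-Q^{odd,a}_\e(x)|\le |x-\e i|\,|\nabla Q^{odd,a}_\e(x)|\le \e\,|\nabla Q^{odd,a}_\e(x)|.
\]
Squaring and integrating over $\Om'$, which up to a null set is tiled by the diamonds, yields $\int_{\Om'}|Q^{odd}_\e-Q^{odd,a}_\e|^2\,dx\le \e^2\int_{\Om'}|\nabla Q^{odd,a}_\e|^2\,dx$; enlarging the domain of the last integral to $\Om'+B(0,\sqrt2\e)$ (legitimate since the integrand is nonnegative, and harmless once $\e$ is small enough that this set is compactly contained in $\Om$, so that $Q^{odd,a}_\e$ is defined there) gives \eqref{interpol+}. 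The factor $\e^2$, in place of the $\tfrac12\e^2$ of Lemma~\ref{Lemma_strong_conv}, reflects that the circumradius of $\widehat W$ is $\e$ rather than $\tfrac{\e}{\sqrt2}$.

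The compactness part is then verbatim as in Lemma~\ref{Lemma_strong_conv}. Under the uniform bound on $\int_{\Om'+B(0,\sqrt2\e)}|\nabla Q^{odd,a}_\e|^2\,dx$, the maps $Q^{odd,a}_\e$ are bounded in $W^{1,2}_{\mathrm{loc}}$ and uniformly in $L^\infty$ (their values lie in $K$); by Rellich's theorem together with the equiintegrability coming from $|Q^{odd,a}_\e|\le1$ they are $L^2_{\mathrm{loc}}$-compact, and the uniformity in $\Om'$ upgrades this to $L^2(\Om;\Mdue)$. By \eqref{interpol+} one has $Q^{odd}_\e-Q^{odd,a}_\e\to0$ in $L^2(\Om;\Mdue)$, so the two sequences share their limit points; since $Q^{odd}_\e$ only copies the values $Q_\e(\e i)\in\partial K$, and $\partial K$ (see \eqref{partialK}) is closed under strong $L^2$-convergence, each limit point lies in $L^2(\Om;\partial K)$, while the gradient bound promotes it to $W^{1,2}(\Om;\partial K)$. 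The statement for the even interpolations is obtained by running the same argument on the even sublattice, whose triangulation has the identical local structure. The main obstacle is precisely the geometric claim of the second paragraph; once it is settled, everything else is a routine transcription of Lemma~\ref{Lemma_strong_conv}.
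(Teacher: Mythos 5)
Your proposal is correct and follows essentially the same route as the paper's proof: the mean value theorem applied along the segment from $x$ to the cell centre $\e i$, using that $\nabla Q^{odd,a}_\e$ is a.e.\ constant on that segment, followed by the same Rellich-type compactness argument as in Lemma~\ref{Lemma_strong_conv}. The only difference is that you spell out the geometric fact (the six edges incident to $\e i$ cut the diamond $\e\{i+\widehat W\}$ into sectors each lying in a single triangle) which the paper merely asserts, and your accounting for the constant $\e^2$ via the circumradius of $\widehat W$ matches the paper's.
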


\begin{proof} The proof follows the one of Lemma \ref{Lemma_strong_conv}.
Consider an odd $i\in \Z_\e(\Om')$ and fix $x$ in the interior of $\e \{i +\widehat W\}$. By construction, up to a null set the gradient $\nabla Q^{odd,a}_\e$ is constant on the segment joining $x$ and $\e i$, the only possible exception being  when $x-\e i$ is parallel to one of the vectors $e_1$, $e_1-e_2$, and $e_1+e_2$. By this and the mean value theorem we then have that 
$$
|Q^{odd}_\e(x)-Q^{odd,a}_\e(x)|=|Q^{odd,a}_\e(\e i)-Q^{odd,a}_\e(x)|\le|x-\e i||\nabla Q^{odd,a}_\e(x)|
$$
for a.e.\ $x \in \e \{i+W'\}$. Therefore
$$
|Q^{odd}_\e(x)-Q^{odd,a}_\e(x)|^2 \le \e^2 |\nabla Q^{odd,a}_\e(x)|^2
$$
for a.e.\ $x \in \e \{i+\widehat W\}$. Summing over all such cubes, we get \eqref{interpol+}, and we conclude as in Lemma~\ref{Lemma_strong_conv}.
\end{proof}

\begin{figure}
\begin{center}
\includegraphics[scale=.35 ]{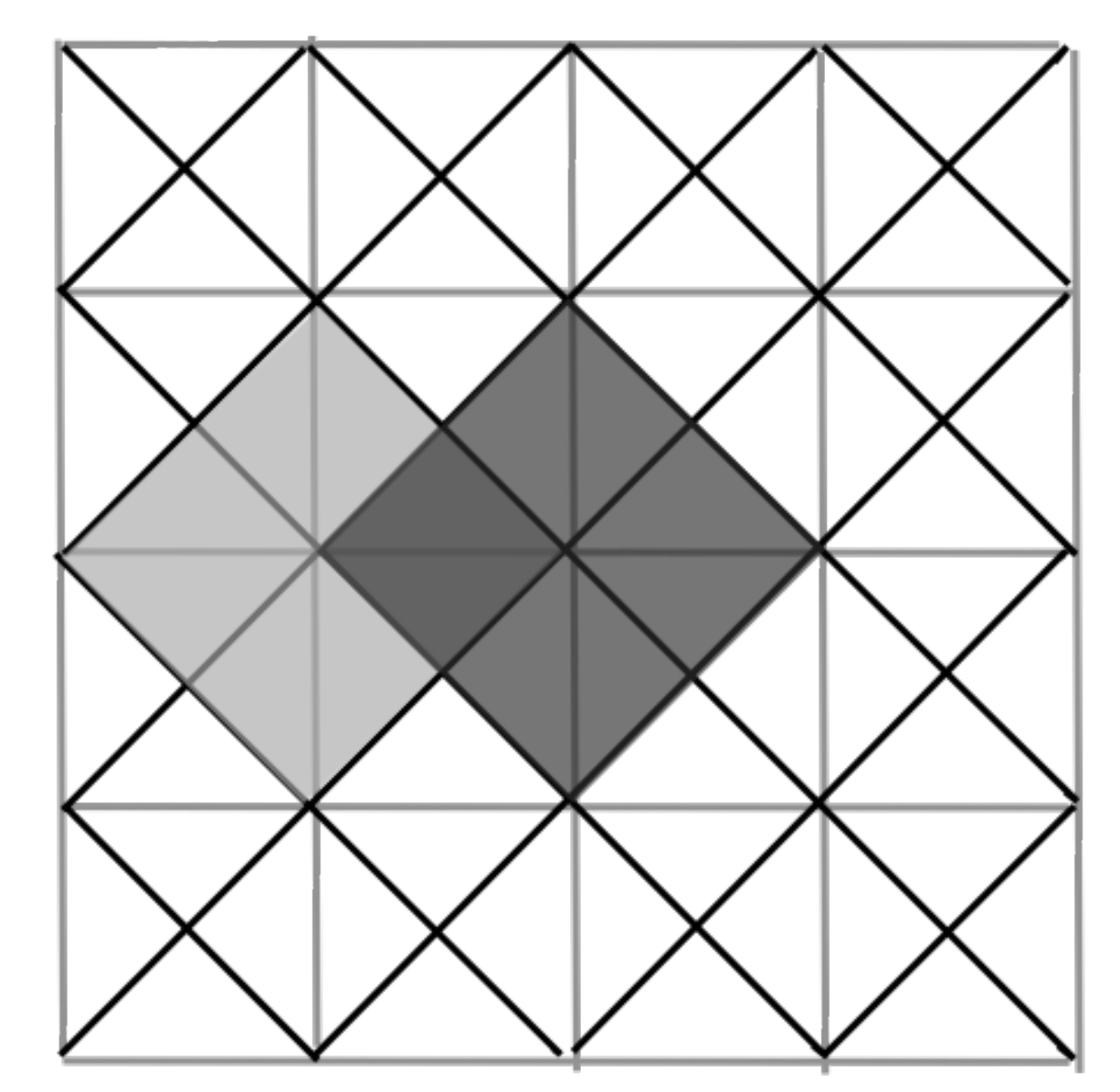}
\caption{Even and odd dual lattices in Theorem \ref{thm:oscillating} and Lemma \ref{lemma:strong_conv2} and their unitary cells in light and dark grey}
\label{fig:dual_lattice}
\end{center}
%
\end{figure}

\begin{proof}[{\it Proof of Theorem {\rm\ref{thm:oscillating}.}}]

Proof of the $\Gamma$-liminf inequality.

Let $Q_\e\in C_\e(\Om;K)$ be such that $Q_\e\wtos Q$ and that $\sup_\e F^1_\e(Q_\e)\leq C$. 
In order to give the optimal lower bound for the energy, we proceed as follows. Consider the rotated cube $\widehat W$, and the odd and even piecewise-affine interpolations $Q_\e^{odd,a}$ and $Q_\e^{even,a}$of $Q_\e$, as in Lemma \ref{lemma:strong_conv2}. If we split $\widehat W$ into two triangles having common boundary in the direction $e_1$, calling $\widehat T_1$ the upper one and $\widehat T_2$ the lower one, by construction $\nabla Q_\e^{odd,a}$ is constant on each cell $\e(\widehat T_1+i)$ and $\e(\widehat T_2+i)$ with $i$ even (so that all the vertices of the cells are odd points). Therefore, also using \eqref{prodotti} and \eqref{modul}, for every $i$ even we have
\begin{eqnarray}\label{identita_gradienti}
\nonumber&&\hskip-2cm
\int_{\e(\widehat T_1+i)}|\nabla Q_\e^{odd,a}(x)|^2\, dx\\
\nonumber &=&\e^2\left(
\frac{|Q_\e(\e(i+e_2)-Q_\e(\e(i-e_1)|}{\sqrt{2}\e}
\right)^2+\e^2\left(
\frac{|Q_\e(\e(i+e_2)-Q_\e(\e(i+e_1)|}{\sqrt{2}\e}\right)^2\\
\nonumber
&=&\frac12\left(|Q_\e(\e(i+e_2))-Q_\e(\e(i-e_1))|^2+|Q_\e(\e(i+e_2))-Q_\e(\e(i+e_1))|^2\right)\\
&=&(1-(u_\e(\e i+\e e_2)\cdot u_\e(\e i-\e e_1))^2)+(1-(u_\e(\e i+\e e_2)\cdot u_\e(\e i+\e e_1)^2)
\end{eqnarray}
Fix $\Om'\subset\subset\Om$. Summing over the even $i\in\Z^2$ we obtain 
\begin{eqnarray*}
&&\hskip-2cm\sum_i\left(\int_{\e(\widehat T_1+i)}|\nabla Q_\e^{odd,a}(x)|^2\ dx+\int_{\e(\widehat T_2+i)}|\nabla Q_\e^{odd,a}(x)|^2\ dx\right)\\
&\leq&2\cdot\frac12\sum_{|i-j|=\sqrt{2},\ i \text{ odd}}(1-(u_\e(\e i)\cdot u_\e(\e j))^2),
\end{eqnarray*}
where the prefactor $2$ accounts for at most two different triangles leading to the same next-to-nearest neighbor interaction, while the other additional prefactor $\frac12$ is due to the passage from an ordered to a non-ordered sum. 
This leads to 
\begin{eqnarray}\label{stimagradiente_odd}
\int_{\Om'}|\nabla Q^{odd,a}_\e(x)|^2\ dx\leq \sum_{|i-j|=\sqrt{2},\ i\text{ odd}}(1-(u_\e(\e i)\cdot u_\e(\e j))^2)\leq F_\e^1(Q_\e)\leq C,
\end{eqnarray}
where $C$ is independent of $\Om'$ and $\e$. This estimate implies in particular that $Q_\e^{odd,a}$ is weakly compact in $W^{1,2}(\Om;\Mdue)$ and thus strongly compact in $L^2(\Om;\Mdue)$. Let $Q^{odd}$ be its limit, by Lemma \ref{lemma:strong_conv2} we have that $Q^{odd}\in W^{1,2}(\Om;\partial K)$. As observed in Remark \ref{remark:approx}, this implies the existence of $\widehat v\in W^{1,2}(\Om;S^1)$ such that $Q^{odd}(x)=\widehat v(x)\otimes \widehat v(x)$. Thus, using \eqref{fattoregiusto2} we get
\begin{equation}\label{stima_odd}
\int_\Om|\nabla \widehat v(x)|^2\ dx=\frac12\int_\Om|\nabla Q^{odd}(x)|^2\ dx\leq \frac12\liminf_\e \sum_{|i-j|=\sqrt{2},\ i\ \text{ odd}}(1-(u_\e(\e i)\cdot u_\e(\e j))^2).
\end{equation}
A similar argument leads to the existence of $\widehat w\in W^{1,2}(\Om;S^1)$ such that $Q_\e^{even,a}\to Q^{even}$, where $Q^{even}(x)=\widehat w(x)\otimes \widehat w(x)$ and
\begin{equation}\label{stima_even}
\int_\Om|\nabla \widehat w(x)|^2\ dx=\frac12\int_\Om|\nabla Q^{even}(x)|^2\ dx\leq \frac12\liminf_\e \sum_{|i-j|=\sqrt{2},\ i\text{ even}}(1-(u_\e(\e i)\cdot u_\e(\e j))^2).
\end{equation}
After summing \eqref{stima_odd} and \eqref{stima_even} we get
\begin{eqnarray}\label{stima_tot}
\Gamma\hbox{-}\liminf_\e F_\e^1(Q)\geq 2\int_\Om|\nabla \widehat v(x)|^2\ dx+ 2\int_\Om|\nabla \widehat w(x)|^2\ dx.
\end{eqnarray}

We now show that
\begin{equation}\label{claim}
Q(x)=\frac12	\widehat v(x)\otimes \widehat v(x)+\frac12 \widehat w(x)\otimes \widehat w(x)\,.
\end{equation}
To do this, we construct the (odd and even) piecewise-constant interpolations $Q_\e^{odd}$ and $Q_\e^{even}$ as in Lemma \ref{lemma:strong_conv2}; by the same Lemma we obtain that 
\begin{eqnarray*}
Q_\e^{odd}\to Q^{odd}, \quad Q_\e^{even}\to Q^{even}.
\end{eqnarray*}
strongly in $L^{2}(\Om;\Mdue)$. Note that $\frac{Q_\e^{odd}+Q_\e^{even}}{2}$ coincides exactly with the piecewise-constant dual interpolation $Q'_\e$ of $Q_\e$ defined in \eqref{rel} (see also Figure \ref{fig:dual_lattice}). The previous discussion and Lemma \ref{easy} imply then that \eqref{claim} holds, and that $Q'_\e \to Q$ strongly in $L^{2}(\Om;\Mdue)$. Using formula \eqref{identity} and arguing as in the previous section, we have that  
\begin{eqnarray*}
\sup_\e\frac{4}{\e^2}\int_{\Om'}h(\sqrt{2}|Q'_\e(x)-\frac12 I|)\ dx\leq \sup_\e\sum_{|i-j|=1}h(|(u_i\cdot u_j)|)\leq C,
\end{eqnarray*}
which, together with the strong compactness of $Q_\e'$ implies that $\int_{\Om}h(\sqrt{2}|Q(x)-\frac12 I|)\ dx=0$. This gives that $Q(x)\in\partial K_s$ for a.e.$x\in\Om$, therefore $Q\in W^{1,2}(\Om; \partial K_s)$. From this, \eqref{stima_tot}, \eqref{claim} and Lemma \ref{lemmaA} the lower-bound inequality follows.

\bigskip

Proof of the $\Gamma\hbox{-}\limsup$ inequality assuming \eqref{thm:hregular}.\\

Let $R>0$ be such that $\Om\subset RW$ and let $Q\in W^{1,2}(RW;\partial K_s)$ denote the (not renamed) extension of $Q\in W^{1,2}(\Om;\partial K_s)$, which exists thanks to the regularity of $\Om$. As observed in Remark \ref{remark:approx} we can suppose that $Q$ is as in \eqref{formula_magica_2} with $v,w\in C^{\infty}(RW;S^1)\cap W^{1,2}(RW;S^1)$. We now define $u_\e$ as
\begin{eqnarray*}
u_\e(\e i):=\begin{cases}
v_i&\text{if $i_1+i_2\in 2\Z$,}\\
w_i&\text{else,}
\end{cases}
\end{eqnarray*}
and $Q_\e\in C_\e(RW;K)$ and $Q_\e'\in C_\e'(RW;K)$ as the piecewise-constant interpolation of $Q(u_\e)$ on the lattices $\Z_\e$ and $\Z_\e'$, respectively. By the regularity of $v$ and $w$ we have that $Q'_\e$ (and thus $Q_\e$ by Lemma \ref{easy}) converge to $Q$ weakly$^*$ in $L^{\infty}(\Om;K)$. Let the piecewise-affine interpolations $Q_\e^{odd,a}$ and $Q_\e^{even,a}$ be defined as in the previous step. By construction, $Q_\e^{odd,a}$ and $Q_\e^{even,a}$ agree with the piecewise-affine interpolations of $w\otimes w$ on the cells of the type $\e(i+\widehat W)$ with $i$ even and with $v\otimes v$ on the cells of the type $\e(i+\widehat W)$ with $i$ odd, respectively. Observe that we have 
\begin{equation}\label{thm:strong_conv}
Q_\e^{odd,a}\to w\otimes w\quad Q_\e^{even,a}\to v\otimes v
\end{equation}
strongly in $W^{1,2}(\Om; \Mdue)$. Using \eqref{identita_gradienti} we deduce that
\begin{equation}\label{limsup_odd}
\sum_{|i-j|=\sqrt{2},\ i \text{ odd}}(1-(u_\e(\e i)\cdot u_\e(\e j))^2)\leq \int_{{\Omega+B(0,\sqrt{2}\e)}}|\nabla Q_\e^{odd,a}(x)|^2\ dx.
\end{equation}
Similarly,
\begin{equation}\label{limsup_even}
\sum_{|i-j|=\sqrt{2},\ i \text{ even}}(1-(u_\e(\e i)\cdot u_\e(\e j))^2)\leq \int_{{\Omega+B(0,\sqrt{2}\e)}}|\nabla Q_\e^{even,a}(x)|^2\ dx.
\end{equation}
Combining \eqref{thm:strong_conv}, \eqref{limsup_odd} and \eqref{limsup_even}, using \eqref{fattoregiusto} and Lemma \ref{lemmaA} we get
\begin{eqnarray}\label{limsup_NNN}\nonumber
&&\hskip-2cm\limsup_\e \sum_{|i-j|=\sqrt{2}}(1-(u_\e(\e i)\cdot u_\e(\e j))^2)\\ \nonumber
&\leq& \int_\Om |\nabla (w(x)\otimes w(x))|^2+|\nabla (v(x)\otimes v(x))|^2\ dx\\&=& 2\left(\int_\Om|\nabla w(x)|^2\ dx+\int_\Om|\nabla v(x)|^2\ dx\right)=F^1(Q).
\end{eqnarray}
In view of \eqref{limsup_NNN} it is left to show that 
\begin{equation}\label{claim_2}
\limsup_\e\sum_{|i-j|=1}h(|(u_\e(\e i)\cdot(u_\e(\e j))|)\leq 0.
\end{equation}
Since $Q\in C^{\infty}(RW;\partial K_s)$ and $v,w\in C^{\infty}(RW; S^1)$ we have that 
\begin{equation}\label{thm:s}
|(v(x)\cdot w(x))|=s\quad \hbox{ for all }x\in RW.
\end{equation}
As a result, due to the regularity of $v$ and $w$, for $\e$ small enough we have that 
\begin{equation*}
s-\delta\leq|(u_\e(\e i)\cdot(u_\e(\e j))|\leq s+\delta
\end{equation*}
and 
\begin{equation}\label{thm:lip}
|w(\e i)-w(\e j)|\leq L\e\quad \hbox{ for all } i,j \text{ such that } |i-j|=1. 
\end{equation}
Therefore, by construction of $u_\e$, \eqref{thm:hregular}, \eqref{thm:s} and \eqref{thm:lip} we get that
\begin{eqnarray*}
\sum_{|i-j|=1}h(|(u_\e(\e i)\cdot(u_\e(\e j))|)&\leq& \sum_{|i-j|=1}\Big||(u_\e(\e i)\cdot(u_\e(\e j))|-s\Big|^{2p}\\&=&\sum_{|i-j|=1}\Big||(v(\e i)\cdot(w(\e j))|-s\Big|^{2p}\\&=&\sum_{|i-j|=1}\Big||(v(\e i)\cdot(w(\e j))|-|(v(\e i)\cdot(w(\e i))|\Big|^{2p}\\&\leq&\sum_{|i-j|=1}|w(\e j)-w(\e i)|^{2p}\leq 2L|\Om+B(0,\e)|\e^{2(p-1)}.
\end{eqnarray*}
By this estimate, \eqref{claim_2} follows.
\end{proof}
\begin{remark}
If $h\in C^2(s-\delta,s+\delta)$ for some $\delta>0$ and has a strict minimum at $s$, then $h$ satisfies \eqref{thm:hregular} with $p\geq 1$. In the case when $p=1$ our construction still shows that $\Gamma\hbox{-}\limsup_\e F_\e(Q)<+\infty$ if and only if $Q\in W^{1,2}(\Om;\partial K_s)$.
\end{remark}
\begin{remark}
Note that the prefactor $s^{-2}$ in \eqref{Gamma-lim_1} is proportional to the square of the curvature of $\partial K_s$. Thus, when seen as a function of $s$, we may give to our limit energy a nice interpretation: it quantifies the cost of a unit spatial variation of the order parameter $Q$ depending on its distance from the ordered state. Indeed it is minimal for $s=1$  and it diverges as $s$ goes to zero, where $s=1$ corresponds to a uniform state while $s=0$ corresponds to a disordered state. 
\end{remark}

\subsection{Concentration-type scaling}\label{concentration}
We now consider a different scaling for the functionals $E_\e$ with $h(x)=(1-x^2)$ which will lead to a concentration phenomenon. We define
\begin{equation*}
F_\e^c(Q):=\frac{F_\e(Q)-\inf F_\e}{\e^2|\log \e|}.
\end{equation*}
As usual we may rewrite the functional as
\begin{equation}
F_\e^c(Q)=
\begin{cases}\displaystyle
\frac{1}{|\log \e |}\sum_{|i-j|=1}(1-|(u_i\cdot u_j)|^2)&\text{if $ Q\in C_\e(\Om;K)$}\\
+\infty&\text{otherwise}.
\end{cases}
\end{equation} 
The $\Gamma$-limit of $F_\e^c$ will give rise to concentration phenomena. Following the ideas in \cite{ACXY}, the $\Gamma$-limit of $F_\e(Q)-\inf F_\e$ and of its surface scaling $\frac{F_\e(Q)-\inf F_\e}{\e}$ turn out to be trivial. The scaling we have chosen allows us to consider $F_\e^c$ as a sequence of Ginzburg-Landau type functionals with a non trivial limit. As known in this framework, in order to track the concentration effects one need to define an appropriate notion of convergence of suitable jacobians of the order parameter as well as o notion of degree.
For every $Q:\Om\to K$ we consider the auxiliary vector valued map $A(Q):\Om\to \R^2$ defined as
\begin{equation}
A(Q):=(2Q_{11}-1,2Q_{12}).
\end{equation}
Note that $A(Q)$ and $Q$ have the same Sobolev regularity; in particular, if $Q\in W^{1,1}(\Om;K)$ we have that $A(Q)\in W^{1,1}(\Om;\R^2)\cap L^{\infty}(\Om;\R^2)$. Thus, we may define the distributional Jacobian of $A(Q)$ as in \eqref{jwds}. Furthermore if $Q\in W^{1,2}(\Om;K)$ then $A(Q)\in W^{1,2}(\Om;\R^2)\cap L^{\infty}(\Om;\R^2)$ and the following equality holds for almost every $x\in\Om$:
\begin{equation}\label{fattore2}
|\nabla A(Q)(x)|^2=2|\nabla Q(x)|^2.
\end{equation} 
Arguing as in Section \ref{uniform states} and using \eqref{fattore2} we have that
\begin{equation}\label{fattore3}
F^c_\e(Q_\e)=\frac{1}{|\log\e|}\int_{\Om_\e}|\nabla Q^a_\e(x)|^2\,dx=\frac{1}{2|\log\e|}\int_{\Om_\e}|\nabla A(Q^a_\e)(x)|^2\,dx,
\end{equation}
where $Q^a_\e$ is the usual piecewise-affine interpolation of $Q_\e$ on $\Z_\e(\Om)$. 

The following compactness and $\Gamma$-convergence result for $F_\e^c$. In the statement, $\|\cdot\|$ denotes the dual norm of $C^{0,1}_c(\Omega)$.
\begin{theorem}\label{GL-Maintheorem_2d}It holds that:
\begin{itemize}
\item[(i)] {\rm Compactness and lower-bound inequality.}
Let $(Q_\e)$ be a sequence of functions such that $F_\e(Q_\e)\leq C$. Then we can extract
a subsequence (not relabeled) such that, $\| J(A(Q^a_\e))-\pi
\mu\|\to 0$, where $\mu=\sum_{k=1}^m z_k\delta_{x_k}$ for
some $m\in\N$ , $z_k\in\Z$ and $x_k\in\Omega$. Moreover
\begin{eqnarray}\label{Tliminf}
\liminf_\e F_\e^c(Q_\e)\geq \pi|\mu|(\Omega)=\pi\sum_{k=1}^m|z_k|.
\end{eqnarray}
\item[(ii)] {\rm Upper-bound inequality.} Let
$\mu=\sum_{k=1}^mz_k\delta_{x_k}$. Then there exists a sequence
$(Q_\e)$ such that, $\|J(A(Q^a_\e))-\pi \mu\|\to 0$ and
$$
\lim\limits_{\e\to 0} F_\e(Q_\e)=\pi|\mu|(\Omega)=\pi\sum_{k=1}^m|z_k|.
$$
\end{itemize}
\end{theorem}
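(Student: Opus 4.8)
The plan is to reduce the statement to the variational analysis of the planar $XY$ model under the logarithmic scaling carried out in \cite{ACXY}; the whole argument hinges on recognizing that the auxiliary map $A$, restricted to $\partial K$, is nothing but the angle-doubling diffeomorphism. Indeed, for $u=(\cos\theta,\sin\theta)\in S^1$ one computes $A(Q(u))=(\cos 2\theta,\sin 2\theta)$, so that $A$ identifies $\partial K=S^1_\otimes$ smoothly and bijectively with $S^1$, turning the half-integer director singularities into integer vortices of the field $v_\e:=A(Q^a_\e)$. Writing $w_i:=A(Q(u_i))\in S^1$, one has $w_i\cdot w_j=2(u_i\cdot u_j)^2-1$, hence $1-|u_i\cdot u_j|^2=\tfrac14|w_i-w_j|^2$, which exhibits $F^c_\e$ as a discrete $XY$ energy; combining this with \eqref{fattore3} gives the exact identity
\[
F^c_\e(Q_\e)=\frac{1}{2|\log\e|}\int_{\Om_\e}|\nabla v_\e(x)|^2\,dx .
\]
Since $v_\e$ interpolates affinely between $S^1$-valued nodal data and takes values in the unit disk $A(K)=\overline{B(0,1)}$ inside each triangle, this is precisely the scaled (potential-free) Ginzburg--Landau energy of $v_\e$, normalized so that one degree-one vortex costs $\pi+o(1)$.

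For part (i), I would then invoke the compactness and lower-bound results of \cite{ACXY} directly for the sequence $v_\e$. From the bound $F^c_\e(Q_\e)\le C$, that is $\tfrac{1}{2|\log\e|}\int_{\Om_\e}|\nabla v_\e|^2\le C$, the ball-construction machinery provides a subsequence along which $J(v_\e)=J(A(Q^a_\e))\to\pi\mu$ in the dual norm of $C^{0,1}_c(\Om)$, with $\mu=\sum_{k=1}^m z_k\delta_{x_k}$, $z_k\in\Z$, $x_k\in\Om$, together with the matching bound $\liminf_\e F^c_\e(Q_\e)\ge\pi|\mu|(\Om)=\pi\sum_{k=1}^m|z_k|$. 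The linear dependence on the degrees is the usual effect of the first-order lower bound: a ball carrying total degree $d$ contributes at least $\pi|d|$, and one sums over the final disjoint balls.

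For part (ii), given $\mu=\sum_k z_k\delta_{x_k}$ I would take the $XY$ recovery sequence $w_\e$ of \cite{ACXY}, for which $J(w^a_\e)\to\pi\mu$ and $\tfrac{1}{2|\log\e|}\int_{\Om_\e}|\nabla w^a_\e|^2\to\pi|\mu|(\Om)$, and simply set the nodal values $Q_\e(\e i):=(A|_{\partial K})^{-1}(w_\e(\e i))\in\partial K=S^1_\otimes$, which defines an admissible $Q_\e\in C_\e(\Om;K)$. Because $A$ is affine, $A(Q^a_\e)$ equals the affine interpolation $w^a_\e$, so $J(A(Q^a_\e))=J(w^a_\e)\to\pi\mu$, and the energy identity above yields $F^c_\e(Q_\e)=\tfrac{1}{2|\log\e|}\int_{\Om_\e}|\nabla w^a_\e|^2\to\pi|\mu|(\Om)$. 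Defining the competitor at the level of $Q=A^{-1}(w)$ is what lets me bypass any orientability obstruction for the director: the sign ambiguity of the square root $w\mapsto u$ is invisible to $Q_\e=u_\e\otimes u_\e$, so a globally well-defined nematic recovery sequence exists even when $w_\e$ has odd total degree.

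The core of the proof, and where I expect the main difficulty to lie, is the reduction itself: checking that the doubling diffeomorphism produces the scaled Dirichlet energy of an $S^1$-nodal field with exactly the prefactor $\tfrac12$, so that the constant in $\pi|\mu|(\Om)$ comes out correctly. After this normalization is pinned down, the genuinely hard analytic input --- the ball-construction lower bound linear in the degrees and the flat-topology compactness of the Jacobians --- is taken over from the Ginzburg--Landau/$XY$ theory of \cite{ACXY}, the only bookkeeping being the discrete-to-affine passage recorded in \eqref{fattore3}.
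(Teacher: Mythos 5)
Your proposal is correct and follows essentially the same route as the paper, which simply reduces the statement to the $XY$/Ginzburg--Landau analysis of \cite{ACXY} via the identities \eqref{fattore2} and \eqref{fattore3}; your explicit verification that $A$ restricted to $\partial K$ is the angle-doubling bijection, that $1-|u_i\cdot u_j|^2=\tfrac14|w_i-w_j|^2$, and that the affine map $A$ commutes with piecewise-affine interpolation is exactly the content the paper leaves implicit. The observation that building the recovery sequence at the level of $Q=(A|_{\partial K})^{-1}(w)$ sidesteps the orientability issue for odd-degree vortices matches the paper's remark on half-integer singularities.
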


\begin{proof}
The proof follows by arguing as in \cite{ACXY}, using the identities \eqref{fattore2} and \eqref{fattore3}.
\end{proof}

\begin{figure}
\begin{center}
\includegraphics[scale=.35]{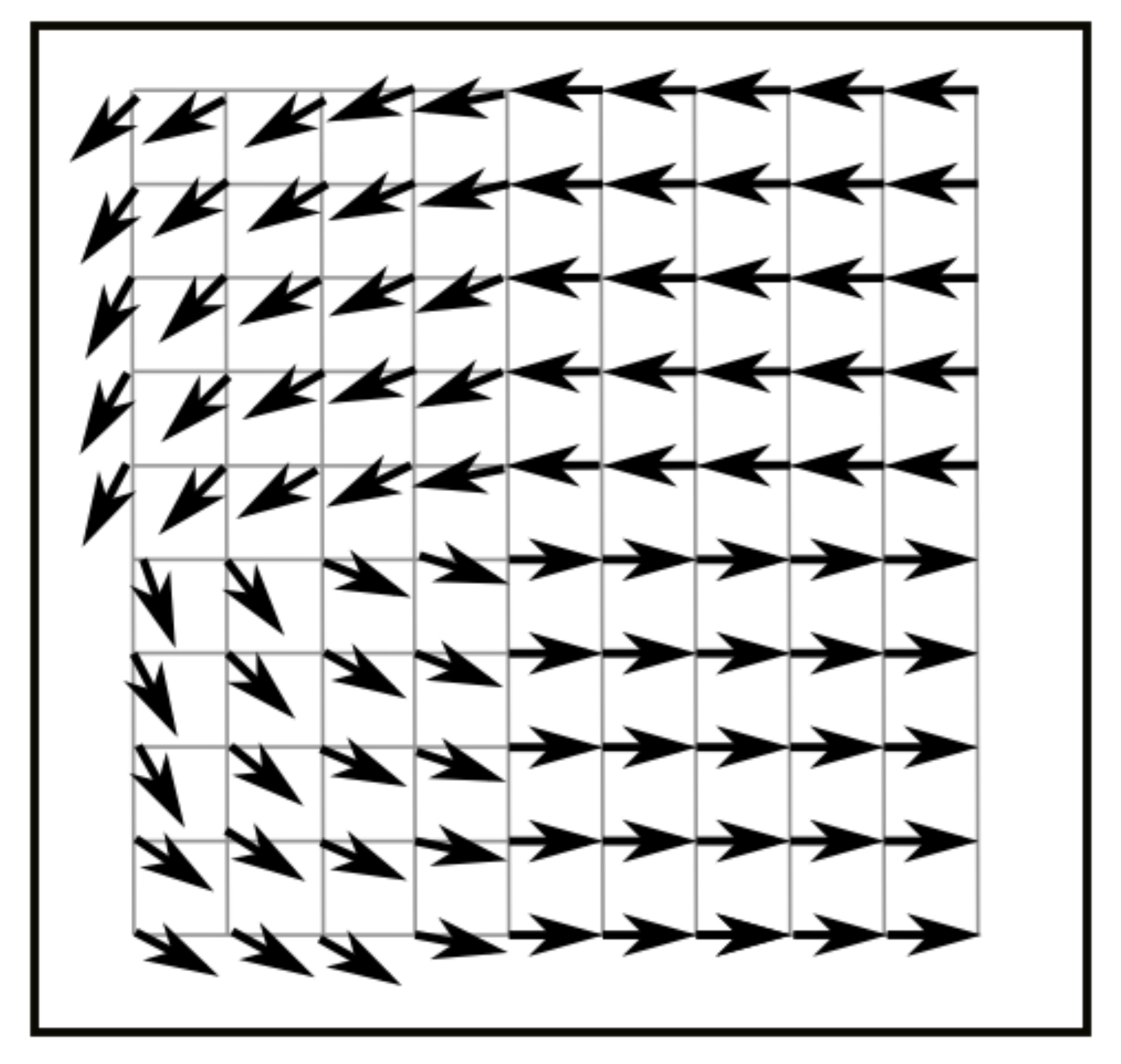}
\caption{Half charged discrete vortex for $\mu=\delta_{0}$}\label{fig_vortex}
\end{center}
%
\end{figure}

\begin{remark}
Coming  back to the energy description in terms of the order parameter $u\in S^1$, the result above has an interesting interpretation. Assume that $0\in\Om$ and take $\mu=\delta_{0}$. By the previous theorem there exists a recovery sequence $Q_\e$ such that $J(A(Q_\e^a))$ approximates $\pi\mu$ in the dual norm of $C^{0,1}_c(\Omega)$. Following the same ideas as in \cite{ACXY}, such $Q_\e$ can be obtained by discretizing the map $Q(x)=\bigl(\frac{x}{|x|}\bigr)^{\frac12}\otimes\bigl(\frac{x}{|x|}\bigr)^{\frac12}$ (here, the square root is meant in the complex sense). The presence of the square root is easily explained: for $u(x)=\bigl(\frac{x}{|x|}\bigr)^\frac12$ we have $J(u)=\frac{\pi}{2}\delta_0$. Moreover, for every $Q=Q(u)$ we have that $A(Q)=(u_1+i u_2)^2$ which in turn implies $J(A(Q))=2J(u)=\pi\delta_0=\pi\mu$. Since $Q_\e=Q(u_\e)$, with $u_\e$ the discretization of $u(x)$, read in terms of the vectorial order parameter, the optimal sequence is pictured in Figure \ref{fig_vortex} and the energy 
concentrates on the topological singularity of a map having half degree. Of course, by the locality of the construction of the recovery sequence in \cite{ACXY}, this observation extends to any $\mu$ of the type $\mu=\sum_{k=1}^m z_k\delta_{x_k}$ for some $m\in\N$ , $z_k\in\Z$ and $x_k\in\Omega$, thus asserting that the optimal sequence in therms of $u$ looks like a complex product of a finite number of maps with half-integer singularities.  

\end{remark}

\begin{remark}\label{rem:concentration-longrange}
The Lebwohl-Lasher model we have considered in this section belongs to a more general class of two-dimensional Maier-Saupe models with long-range interactions, which in the bulk scaling can be written as 
\begin{eqnarray*}
E_\e(u)=\sum_{\xi\in \zN }\sum_{\alpha\in R_\e^\xi(\Omega)} \e^2c^\xi (1-(u(\e\alpha),u(\e\al+\e\xi)^2),
\end{eqnarray*}
with $c^\xi=c^{\xi\perp}$, $c^{e_1}>0$ and such that $\sum_\xi|\xi|^2c^\xi<+\infty$. A relevant example of energy models falling into this class has been proposed in \cite{Silvano} where $c^\xi=|\xi|^{-6}$. The authors are indeed interested in models where a particle has a large number of interactions, this being in spirit closer to the mean-field approach of the Maier-Saupe theory.  As a consequence of Theorem 5 in \cite{ACXY}, the results stated in Theorem \ref{GL-Maintheorem_2d} continue to hold for this class of functionals provided we replace the prefactor $\pi$ by $\pi\sum_\xi|\xi|^2c^\xi$. We also observe, as a byproduct of this result, that bulk and surface-type scalings of $E_\e$ turn out to have trivial $\Gamma$-limits. 
\end{remark}

\bibliography{BCS-arXiv}
\bibliographystyle{plain}

\end{document}